\pgfplotsset{compat=1.15}
\definecolor{comment}{HTML}{005000} 
\definecolor{string}{HTML}{BB00BB}
\newtheoremstyle{Stil}{\item[ \normalfont\textbf{##1 ##2. }]}{\item[\normalfont\textbf{##1 ##2 (##3).}]}
\theoremstyle{Stil}
\newtheorem{Satz}{Theorem}[section]
\newtheorem{Def}[Satz]{Definition}
\newtheorem{Bem}[Satz]{Remark}
\newtheorem{Thm}[Satz]{Theorem}
\newtheorem{Lem}[Satz]{Lemma}
\newtheorem{Kor}[Satz]{Corollary}
\newtheorem{Ann}[Satz]{Assumption}
\newtheoremstyle{StilNoNumber}{\item[ \normalfont\textbf{##1. }]}{\item[\normalfont\textbf{##1 (##3).}]}
\theoremstyle{StilNoNumber}
\newtheorem{ThmNN}{Theorem}[section]
\newtheoremstyle{StilB}{\item[ \textit{##1: }]}{\item[ \textit{##3:}]}
\theoremstyle{StilB}
\newtheorem{proof}{Proof}
\newcommand*{\centerfloat}{
	\parindent \z@
	\leftskip \z@ \@plus 1fil \@minus \textwidth
	\rightskip\leftskip
	\parfillskip \z@skip}
\renewcommand\expandafter\subsection\expandafter{%
		\expandafter\@fb@secFB\subsection
	}%
\newlength{\leftstackrelawd} 
\newlength{\leftstackrelbwd}
\def\leftstackrel#1#2{\settowidth{\leftstackrelawd}%
	{${{}^{#1}}$}\settowidth{\leftstackrelbwd}{$#2$}%
	\addtolength{\leftstackrelawd}{-\leftstackrelbwd}%
	\leavevmode\ifthenelse{\lengthtest{\leftstackrelawd>0pt}}%
	{\kern-.5\leftstackrelawd}{}\mathrel{\mathop{#2}\limits^{#1}}}
\newcommand{\inv}{^{-1}}
\newcommand{\hilbert}{\ensuremath{\mathcal{H}}}
\newcommand{\vilbert}{\ensuremath{\mathcal{V}}}
\newcommand{\laplace}{\Delta}
\renewcommand{\pmat}[1]{\begin{pmatrix} #1 \end{pmatrix}}
\newcommand*{\tran}{^{\mkern-1.5mu\mathsf{T}}}
\newcommand{\leqc}{\lesssim}
\let\temp\phi
\let\phi\varphi
\let\varphi\temp
\let\temp\epsilon
\let\epsilon\varepsilon
\let\varepsilon\temp
\let\temp\rho
\let\rho\varrho
\let\varrho\temp
\let\temp\theta
\let\theta\vartheta
\let\vartheta\temp
\let\temp\Re
\let\Re\real
\let\real\temp
\let\temp\Im
\let\Im\imaginary
\let\imaginary\temp
\DeclarePairedDelimiterX{\skpu}[1]{\langle}{\rangle}{#1}
\newcommand{\skp}[1]{\skpu*{#1}} 
\let\div\relax
\DeclareMathOperator{\div}{div}
\DeclareMathOperator{\chak}{char}
\newcommand{\amax}{{a_{\text{max}}}}
\begin{document}
	\title{A Wave-type Model for Age- and Space-structured Epidemics}
	\author{Nicolas Schlosser}
	\date{\today}
	\maketitle
	
	\begin{abstract}
		We introduce a novel approach of epidemic modeling by combining age-structured models with damped wave equations. This transforms the parabolic-type reaction-diffusion model into a hyperbolic system that shares many properties with a wave or telegrapher's equation. After we establish the existence of a weak solution of the resulting partial differential equation by means of characteristics, we show that the solutions to the new model converge to a solution of the standard age-dependent reaction-diffusion equation when we let the wave parameter become arbitrarily small. We conclude with a numerical example to illustrate the behavior of the new model and to further support our findings.
	\end{abstract}

Keywords: Mathematical epidemiology, Age structure, Hyperbolic system, Telegrapher's equation, Implicit boundary conditions, Nonlocal operators

MSC: 35Q92, 35L70, 92D30, 41A25

\section{Introduction}

Since the beginning of mathematical epidemiology, differential equations have been a valuable tool to predict the course of a disease, and already the first epidemic model \begin{equation}
	\dot{S} = - \lambda I S, \quad \dot{I} = \lambda I S - \gamma I, \quad \dot{R} = \gamma I,
\end{equation} 
introduced in \cite{KermackMcKendrick}, where $S$, $I$, and $R$ stand for susceptible, infectious and removed individuals respectively, has the form of a nonlinear differential equation. Over time, more elaborate models were proposed for a large variety of diseases, but the general structure of the models remained the same. In order to obtain more accurate models and to account for phenomena such as vertical transmission, diseases spanning multiple generations, or the spread of an epidemic over a country, more variables have been added to the models, resulting in partial differential equations instead. Among the most frequently added variables are age and location of the affected individuals, and the main topic of this work are models with time, age and space. The general structure of an epidemic model with these variables can be stated as \begin{subequations} \label{eq:GeneralModel}
	\begin{gather+}
		(\partial_t + \partial_a) y + L(a, x) y + \Lambda(a, x, y) y = \sigma(a) \laplace y, \label{eq:IntroState}\\
		y(t = 0) = y_0, \quad \partial_\nu y(x \in \partial \Omega) = 0, \label{eq:IntroInit}\\
		y(a = 0) = \int_0^\amax \beta(\alpha, x) y(t, \alpha, x) \dd{\alpha}. \label{eq:IntroBirth}
	\end{gather+}

where $t$ is time, $a \in [0, \amax]$ represents age, and $x \in \Omega$ is the space variable. Further, $y \in \R^n$ is a vector consisting of the $n$ compartments of the model (for example $y = (S, I, R)$), $L$ gathers the linear terms and $\Lambda(y)$ the nonlinear nonlocal terms that, for example, capture infection processes. The matrix $\Lambda(y) \in \R^{n \times n}$ is defined in such a way that every entry $\Lambda(y)^{hi}$ ($h$, $i = 1, \ldots, n$) can be obtained by integrating $y$ over a kernel $k^{hi}$, that is to say \begin{equation} \label{eq:LambdaDefinition}
	\Lambda(a, x, y)^{hi} = \sum_{j = 1}^{n} \int_0^\amax \int_\Omega k^{hij}(a, x, \alpha, \xi) y_j(\alpha, \xi) \dd{\xi} \dd{\alpha} = \skp{k^{hi}(a, x, \cdot, \cdot), y}_{L^2((0, \amax) \times \Omega)^n}.
\end{equation}
\end{subequations}
Here, the function $k^{hij}(a, x, \alpha, \xi)$ describes how many susceptible individuals from compartment $y_i$ having age $a$ and being located at position $x$ are infected from infectives $y_j$ with age $\alpha$ and location $\xi$ and subsequently transition into the infected class $y_h$. Examples for this model class are well known. Going back to \cite{SchlosserMaster}, in \cite{AzmiSchlosser} we gave a general existence and uniqueness result.

However, a common property of diffusion models is the so-called infinite propagation speed, which in the case of epidemic models states that even if the infectious individuals are initially concentrated in a subdomain of $\Omega$, after any time period, however short it may be, the number of infective individuals is positive in every point of $\Omega$. While this effect might be small when computing solutions to epidemic models with real-world data, it nonetheless motivates and justifies the search for alternative formulations. A common way to obtain models with finite propagation speed is to introduce a so-called relaxation parameter $\tau$, similar to how a telegrapher's equation arises from a diffusion equation (cf. \cite{Hadeler2}). Together with some suitable initial and boundary conditions this yields the model \begin{subequations} \label{eq:relaxedEquation}
	\begin{gather+}
		(1 + \tau (\partial_t + \partial_a)) ((\partial_t + \partial_a) y + L y + \Lambda(y)) y = \sigma(a) \laplace y,\label{eq:RelState}\\
		y(t = 0) = y_0, \quad (\partial_t + \partial_a) y(t = 0) = y_1, \quad \partial_\nu y(x \in \partial \Omega) = 0,\label{eq:RelInit}\\
		y(t, a = 0) = \int_0^\amax \beta_0(\alpha, x) y(t, \alpha, x) \dd{\alpha} + g_0(t, x),\label{eq:RelBirth1}\\
		\begin{aligned}
				((\partial_t + \partial_a) y)(a = 0) &= \int_0^\amax \beta_L(\alpha, x) y(t, \alpha, x) + \beta_\nabla(\alpha, x) \cdot \nabla y(t, \alpha, x) + \beta_1(\alpha, x) (\delta y)(t, \alpha, x)\\
				&\quad + \qty(\beta_1(\alpha, x) \Lambda(\alpha, x, y) - \Lambda(0, x, y) \beta(\alpha, x)) y(t, \alpha, x) \dd{\alpha}\\
				&\quad - \Lambda(0, y) g_0(t, x) + g_1(t, x)
		\end{aligned} \label{eq:RelBirth2}
	\end{gather+}
\end{subequations}
which will be derived in detail in \Cref{sec:RelMotivation}. Here, $\beta_0$ is a birth rate, $g_0$ an explicitly given additional number. of births, and $\beta_1$ and $g_1$ are the analoga for the value $(\partial_t + \partial_a) y + L y + \Lambda(y) y$ on the boundary where $a = 0$. In what follows, we will prove that this system, or rather a reformulation of it, is well-posed and admits a weak solution for small time intervals:
\begin{ThmNN} \label{thm:Main1}
	Under certain regularity assumptions on the data, there exists a $T^* > 0$ such that for every $T \in (0, T^*)$ there is a unique weak solution $y \in C([0, T], \vilbert) \cap C(\bar{\mathcal{I}}, L^2((0, T), V))$ with $\delta y \in C([0, T], \hilbert) \cap C(\bar{\mathcal{I}}, L^2((0, T), H))$ to an appropriate weak formulation of the system \eqref{eq:relaxedEquation}. This $y$ satisfies the estimate \begin{equation}
		\norm{y}_{C([0, T], \vilbert)}^2 + \tau \norm{(\partial_t + \partial_a) y}_{C([0, T], \hilbert)}^2 \leqc \mathcal{K}
	\end{equation}
	where \begin{equation}
		\mathcal{K} \coloneqq \norm{y_0}_\vilbert^2 + \tau \norm{y_1}_\hilbert^2 + \norm{f}_{L^2((0, t), \hilbert)}^2 + \norm{g_0}_{L^2((0, t), V)}^2 + \tau \norm{g_1}_{L^2((0, T), H)}^2.
	\end{equation}
	More precisely, $T^*$ can be chosen as $C \cdot (\mathcal{K} + \tau \norm{g_0}_{L^\infty((0, T), H)}^2)\inv$ with a constant $C$ independent of $y_0$, $y_1$, $f$, $g_0$, $g_1$, or $\tau$.
\end{ThmNN}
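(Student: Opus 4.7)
The plan is to rewrite the second-order equation \eqref{eq:RelState} as a coupled first-order system in the characteristic direction $\partial_t + \partial_a$, prove well-posedness of the resulting linearisation, and close a Banach fixed-point argument on a short time interval.

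I would start by introducing the auxiliary variable $w \coloneqq (\partial_t + \partial_a) y + L y + \Lambda(a, x, y) y$, so that \eqref{eq:RelState} is equivalent to the first-order system
\begin{equation*}
	(\partial_t + \partial_a) y = w - L y - \Lambda(y) y, \qquad \tau (\partial_t + \partial_a) w + w = \sigma(a) \laplace y,
\end{equation*}
which is of Friedrichs-type in the single direction $\partial_t + \partial_a$, coupled via the elliptic source $\sigma \laplace y$. Under this change of variable, the boundary conditions \eqref{eq:RelBirth1}--\eqref{eq:RelBirth2} become two nonlocal Dirichlet-type data at the incoming age-boundary $a = 0$, one for $y$ and one for $w$; this matches the characteristic structure of the hyperbolic part. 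Freezing the nonlinearity by replacing $\Lambda(y)$ with $\Lambda(\tilde y)$ for some $\tilde y$ in a closed ball $\mathcal B_R \subset C([0, T], \vilbert)$ with $\delta \tilde y \in C([0, T], \hilbert)$ yields a linear problem which I would solve by integrating along characteristics $t - a = \mathrm{const}$ combined with a Faedo--Galerkin approximation in $x$, in the spirit of \cite{AzmiSchlosser}.

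The heart of the proof is an energy estimate for this linear problem. Testing the $y$-equation against an elliptic lift of $y$ in order to obtain $\vilbert$-norm control, and the $w$-equation against $w$ itself, then integrating over $\Omega \times (0, \amax)$, gives an inequality of the form
\begin{equation*}
	\norm{y(t)}_\vilbert^2 + \tau \norm{w(t)}_\hilbert^2 \leqc \mathcal K + \int_0^t \bigl( \norm{y(s)}_\vilbert^2 + \tau \norm{w(s)}_\hilbert^2 \bigr) \dd{s} + (\text{boundary contributions at } a = 0),
\end{equation*}
where the boundary terms are absorbed using a trace inequality together with Young's inequality, exploiting the specific structure of \eqref{eq:RelBirth2}. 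The $\beta_\nabla \cdot \nabla y$ contribution produces the $\norm{g_0}_{L^2((0, T), V)}^2$ term in $\mathcal K$, while the quadratic nonlocal product $\Lambda(0, y) g_0$ produces the extra $\tau \norm{g_0}_{L^\infty((0, T), H)}^2$ in the formula for $T^*$. A Grönwall argument then yields the claimed a priori bound.

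Finally I would define the solution map $\Phi : \tilde y \mapsto y$ on $\mathcal B_R$ with $R^2 \sim \mathcal K$. Self-mapping follows directly from the a priori estimate. For contraction, the difference $y^{(1)} - y^{(2)}$ of two images satisfies a linear problem whose source and boundary data are linear in $\tilde y^{(1)} - \tilde y^{(2)}$ with coefficients bounded by $R$, using the continuous bilinear structure of $\Lambda$ visible in \eqref{eq:LambdaDefinition}; repeating the same energy estimate produces a contraction factor of the form $C(R) \cdot T$, and solving $C(R) T < 1$ with $R^2 \sim \mathcal K$ gives exactly $T^* \sim (C \cdot (\mathcal K + \tau \norm{g_0}_{L^\infty((0, T), H)}^2))\inv$. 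Banach's fixed-point theorem then delivers uniqueness and the stated estimate. The main obstacle I anticipate is the combined handling of the gradient trace $\beta_\nabla \cdot \nabla y$ and the nonlocal product $\Lambda(0, y) g_0$ in \eqref{eq:RelBirth2}: both couple $\vilbert$-regularity of $y$ in the interior to $\hilbert$-regularity of $w$ on the incoming boundary, so the trace, absorption, and Lipschitz estimates there have to be arranged carefully in order to close the energy estimate and still leave room for a Grönwall factor.
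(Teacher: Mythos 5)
Your overall architecture (linearize by freezing $\Lambda$, a priori energy estimate, Banach fixed point on a ball of radius $R^2 \sim \mathcal{K}$ with contraction factor $\sim (\mathcal{K} + \tau\norm{g_0}_{L^\infty((0,T),H)}^2)\,T$) matches the paper's, and your first-order reformulation in $(y, w)$ with a direct energy/Grönwall estimate is a legitimate alternative to what the paper actually does: the paper restricts the second-order equation to each characteristic line $\chak(t_0)$, invokes the abstract Dautray--Lions theory for second-order evolution equations there (with the $\tau$-dependence tracked via their Lemma XVIII.5.7), and reassembles the solution across characteristics by a Fubini argument. Your route would avoid the citation to abstract theory at the price of doing the energy bookkeeping by hand; the paper's route gets the sharp $\tau$-weights in $\mathbb{V}^\tau = V \times H$ essentially for free from the cited lemma. (Two small misattributions: the $\norm{g_0}_{L^2((0,T),V)}^2$ term in $\mathcal{K}$ comes from needing the zeroth-order birth value $B_0 = \int\beta_0 y + g_0$ in $V$, not from $\beta_\nabla\cdot\nabla y$; and the $\tau\norm{g_0}_{L^\infty((0,T),H)}^2$ in $T^*$ comes not only from $\Lambda(0,y)g_0$ in the birth law but also from the term $\Lambda_2(g_0)y$ that appears when $\tau\delta(\Lambda(y)y)$ is rewritten using the birth law to eliminate $y_a$.)

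The genuine gap is your treatment of the implicit boundary conditions at $a = 0$. These are not Dirichlet-type data that can be ``absorbed using a trace inequality'': the values $y(t, a=0)$ and $(\delta y)(t, a=0)$ are determined by age-integrals of the unknown solution at the same time $t$, so every characteristic launched from $(t_0, 0)$ with $t_0 > 0$ needs initial data that depend on the solution along all other characteristics. A trace-plus-Young absorption may suffice for the \emph{a priori} estimate (since $\norm{y(t,0)}_V \leqc \norm{y(t)}_\vilbert + \norm{g_0(t)}_V$ by the birth law), but it does not \emph{construct} a solution satisfying \eqref{eq:RelBirth1}--\eqref{eq:RelBirth2}, and a Faedo--Galerkin discretization in $x$ alone does not resolve this either, because the nonlocality is in age, not in space. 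The paper closes this loop by deriving a Volterra (renewal) equation for the boundary vector $\mathcal{B} = (B_0, B_1)\tran$, equation \eqref{eq:VolterraRel}, solved in $L^2((0,T), \mathbb{V}^\tau)$ via standard Volterra theory; making the kernel act on $V$-valued functions requires the Sobolev-multiplier hypothesis $\beta_0 \in L^\infty(\mathcal{I}, W^{1,d}_b(\Omega))$ of \Cref{Lem:SobolevMultiplier}, an ingredient absent from your sketch. Without this step (or an equivalent iteration on the birth data), your linear solution operator $\Phi$ is not actually defined, so the fixed-point argument cannot start. Separately, Banach's theorem only gives uniqueness within the ball $\mathcal{B}_R$; the paper obtains uniqueness among all solutions in $L^\infty((0,T),\hilbert)$ from the Grönwall stability estimate of \Cref{lem:NonlRelGronwall}, which you would need to add.
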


Furthermore, we can relate solutions of the relaxed model \eqref{eq:relaxedEquation} to those of the unrelaxed \enquote{classical} model \eqref{eq:GeneralModel} if their initial and birth conditions match. More precisely, we can show that the relaxed model converges to the unrelaxed one for $\tau \to 0$, and the rate of convergence depends on the number of matching initial and birth conditions: \begin{ThmNN} \label{thm:Main2}
	Let $y^\tau$ be a weak solution of \cref{eq:relaxedEquation} and $y$ a weak solution of \cref{eq:GeneralModel}. Under certain regularity assumptions, suppose that $\beta_0 = q_1 \beta$ and $g_0 = (1 - q_1) y(a = 0)$ for some $q_1 \in \R$. Then there is a constant $C$ independent of $\tau$ (but possibly depending on $y$) such that \begin{equation}
		\tau \norm{(\partial_t + \partial_a) (y^\tau - y)}_{C([0, T], \hilbert)}^2 + \norm{y^\tau - y}_{C([0, T], \vilbert)}^2 \leq C \tau.
	\end{equation}
	Furthermore, if for some $q_2 \in \R$ the additional compatibility conditions \begin{gather}
		y_1 = \sigma \laplace y_0 - (L + \Lambda(y_0)) y_0, \quad \beta_1(\alpha) = q_2 \sigma(0) \beta(\alpha) \sigma(\alpha)\inv, \\ g_1 = (1-q_2) (\delta y + L y + \Lambda(y) y) (a = 0)
	\end{gather}
	hold, we even can estimate \begin{equation}
		\tau \norm{(\partial_t + \partial_a) (y^\tau - y)}_{C([0, T], \hilbert)}^2 + \norm{y^\tau - y}_{C([0, T], \vilbert)}^2 \leq C \tau^2
	\end{equation}
	with a constant $C$ independent of $\tau$, but possibly depending on $y$.
\end{ThmNN}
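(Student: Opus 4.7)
The plan is to view $y$ as an approximate solution of the relaxed system \cref{eq:relaxedEquation}, quantify the defects it produces in the PDE, initial and boundary data, and then apply the energy estimate of \Cref{thm:Main1} (in its linear version) to the error $e^\tau \coloneqq y^\tau - y$. First I would substitute $y$ into \eqref{eq:RelState}. Since $y$ satisfies the unrelaxed equation $(\partial_t+\partial_a)y + Ly + \Lambda(y)y = \sigma\laplace y$, the PDE residual collapses to
\begin{equation*}
    f^\tau \coloneqq (1 + \tau(\partial_t+\partial_a))(\sigma\laplace y) - \sigma\laplace y = \tau(\partial_t+\partial_a)(\sigma\laplace y),
\end{equation*}
which is of order $\tau$ in $L^2((0,T),\hilbert)$ under sufficient regularity of $y$.

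Second, I would check the initial and boundary defects. The condition $y(t=0) = y_0$ matches trivially, whereas the unrelaxed PDE forces $(\partial_t+\partial_a)y|_{t=0} = \sigma\laplace y_0 - (L+\Lambda(y_0))y_0$, which generically differs from $y_1$ by an $O(1)$ amount in $\hilbert$ that disappears exactly under the extra compatibility of the second claim. For \eqref{eq:RelBirth1}, the hypothesis $\beta_0 = q_1 \beta$ together with $g_0 = (1-q_1) y(a=0)$ yields
\begin{equation*}
    \int_0^\amax \beta_0 y \dd{\alpha} + g_0 = q_1 y(a=0) + (1-q_1) y(a=0) = y(a=0),
\end{equation*}
so that boundary condition is satisfied without defect. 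The delicate condition is \eqref{eq:RelBirth2}; by tracing back the derivation of $\beta_L, \beta_\nabla, \beta_1, g_1$ in \Cref{sec:RelMotivation} and replacing $(\partial_t+\partial_a)y$ by $\sigma\laplace y - Ly - \Lambda(y)y$ via the unrelaxed PDE, the residual should reduce to $g_1 - (1-q_2)(\delta y + Ly + \Lambda(y)y)(a=0)$ modulo terms fixed by $\beta_1 = q_2 \sigma(0) \beta \sigma\inv$, and therefore vanish under the full compatibility.

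Third, I would set $e^\tau = y^\tau - y$ and subtract the two systems, obtaining a linear relaxed equation for $e^\tau$ with source $-f^\tau$ and the above defects as initial and boundary data. The nonlinearity linearizes by bilinearity in the kernel representation \eqref{eq:LambdaDefinition} as $\Lambda(y^\tau)y^\tau - \Lambda(y)y = \Lambda(e^\tau) y + \Lambda(y^\tau) e^\tau$, and the resulting lower-order terms are absorbed via Gronwall with constants depending only on $\norm{y}$ and $\norm{y^\tau}$, both bounded on $[0,T]$ by \Cref{thm:Main1}. The linear energy estimate underlying \Cref{thm:Main1} then gives
\begin{equation*}
    \tau \norm{(\partial_t+\partial_a) e^\tau}_{C([0,T],\hilbert)}^2 + \norm{e^\tau}_{C([0,T],\vilbert)}^2 \leqc \mathcal{K}_\tau,
\end{equation*}
with $\mathcal{K}_\tau$ the analogue of $\mathcal{K}$ evaluated on the defects. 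Under the first set of hypotheses, $\mathcal{K}_\tau = O(\tau)$, since the two terms $\tau\norm{y_1 - \sigma\laplace y_0 + (L+\Lambda(y_0))y_0}_\hilbert^2$ and $\tau\norm{g_1 - \ldots}^2$ are of order $\tau$ while $\norm{f^\tau}^2 = O(\tau^2)$; under the additional compatibility these $O(\tau)$ contributions vanish and $\mathcal{K}_\tau = O(\tau^2)$, yielding the sharpened rate.

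The hard part will be the precise cancellation in \eqref{eq:RelBirth2}: matching the elaborate structure of $\beta_L, \beta_\nabla, \beta_1, g_1$, which was engineered in \Cref{sec:RelMotivation} to encode the $(\partial_t+\partial_a)$-derivative of \eqref{eq:IntroBirth}, against the traces of the unrelaxed solution requires careful bookkeeping of the commutator between $(\partial_t+\partial_a)$ and the nonlocal operator $\Lambda$. A secondary, but routine, concern is that \Cref{thm:Main1} is stated for the full nonlinear problem, so one has to either invoke its underlying linear estimate with time-dependent coefficients directly or treat the nonlinear perturbation as lower-order and close via Gronwall.
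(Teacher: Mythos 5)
Your proposal is correct and follows essentially the same route as the paper: the paper applies $(1+\tau\delta)$ to the unrelaxed equation so that $y$ solves the relaxed system with residual $f=\tau\delta\sigma\laplace y$ and explicit defects in $y_1$ and $g_1$, then invokes its stability lemma (\Cref{lem:NonlRelGronwall}), which packages exactly your subtraction--bilinearization--Gronwall step, to get $O(\tau)$ from the $\tau$-weighted $O(1)$ defects and $O(\tau^2)$ once the compatibility conditions make them vanish. Your accounting of which terms produce which rate matches the paper's.
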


\subsection{Related works}

Models related to \eqref{eq:GeneralModel} are found abundantly in the literature. Comprehensive introductions to the topic are found in \cite{BDJ, LiYaMa, Perthame, Webb}. Age-structured models can be found in \cite{BDKW, KangRuan}, models with spatial structure in \cite{AbRaSch, Bongarti, WangZhangKuniya}, and the works \cite{Colombo, Fragnelli, Kuniya, Walker, Walker2} consider models where both are present. The method of relaxation was introduced in \cite{Maxwell} and \cite{Cattaneo}, and the limit behavior for relaxed models was e.g. studied in \cite{HuRacke}. In \cite{Kac}, wave equations were first introduced as a model for the movement of individuals, and much effort has since been expended into the generalization to higher dimensions, see for example \cite{Pogorui} and the references therein. Only a few works have addressed wave equations in population dynamics where the population moves along the real line, the most notable being \cite{AllharbiPetrovskii, BertagliaPareschi, Holmes, TillesPetrovskii}. Notably we mention \cite{Hadeler}, which speaks about epidemiological models, and \cite{Hadeler2}, which sketches the case of more than one space dimension. However, to the best of our knowledge, wave-type models with age structure, and the resulting birth conditions, have not been addressed yet. This work is a generalization of Chapter 4 of \cite{SchlosserThesis}.

\subsection{Structure of the paper}

This work is structured as follows: In \Cref{sec:RelMotivation}, we derive the model \eqref{eq:relaxedEquation} by applying a formal relaxation in time and age to \cref{eq:GeneralModel} and derive suitable boundary conditions for $a = 0$. \Cref{sec:Existence} is devoted to the precise formulation and proof of \Cref{thm:Main1}, and in \Cref{sec:Convergence} we give a precise formulation and proof of \Cref{thm:Main2}. Finally, \Cref{sec:Numerics} gives a numerical example for the main theorems and discusses the quantitative properties of the new model.

\subsection{Notation}

By $x^+ \coloneqq \max\set{x, 0}$, we denote the positive part of a real number $x$. For any open subset $\Omega \subset \R^n$ and $f: \Omega \to \R$, the integral of $f$ over $\Omega$ is denoted $\int_\Omega f(x) \dd{x}$. If $g: \partial \Omega \to \R$ is a function, where $\partial \Omega$ is the boundary of $\Omega$ which we assume to be sufficiently smooth, we denote the surface integral of $g$ over $\partial \Omega$ by $\oint_{\partial \Omega} g(x) \dd{A(x)}$.

For a Banach space $X$, we denote the norm of $X$ by $\norm{\cdot}_X$. For another Banach space $Y$, the notation $L(X, Y)$ denotes the bounded linear operators from $X$ to $Y$. Further we let $L(X) \coloneqq L(X, X)$ and $X' \coloneqq L(X, \R)$. All spaces are assumed to be real, until stated otherwise.

In the following we fix a smoothly bounded domain $\Omega \subset \R^d$ (in applications, typically $d$ equals two) and define the spaces 
\begin{equation}
	H \coloneqq L^2(\Omega)^n, \quad V \coloneqq H^1(\Omega)^n,
\end{equation}
where $n$ is the number of compartments in the epidemic model. By defining $V'$ as the dual of $V$ via the dual pairing induced by the inner product on $H$, it is easy to see that $V \subset H \subset V'$ is a Gelfand triple, in the sense of e.g. \cite[Def. 1.26]{HPUU}. By $\laplace \coloneqq \laplace_x$ we denote the Laplace operator acting on the space variables in $\Omega$. 

The age variable is assumed to be bounded by a finite maximal age, denoted by  $\amax > 0$. For brevity, we define $\mathcal{I} \coloneqq (0, \amax)$. We will frequently work in the spaces \begin{equation}
	\hilbert \coloneqq L^2(\mathcal{I}, H) = L^2(\mathcal{I} \times \Omega)^n, \quad \vilbert \coloneqq L^2(\mathcal{I}, V), \quad \vilbert' \coloneqq L^2(\mathcal{I}, V').
\end{equation}
To facilitate our analysis, we fix a time horizon $0 < T < \infty$. To keep the notation short, we write \begin{equation}
	\delta y \coloneqq (\partial_t + \partial_a) y,
\end{equation}
the expression being interpreted as a directional derivative. It is often useful to partition the variable space $[0, T] \times \overline{\mathcal{I}}$ into sets of the form
\begin{equation}
	\chak(t_0) \coloneqq \set{(t_0 + h, h) \where 0 \leq h \leq \amax} \cap ([0, T] \times \overline{\mathcal{I}})
\end{equation}
where $t_0 \in [-\amax, T]$. These sets represent the so-called \emph{characteristic lines} and are illustrated in \Cref{fig:CharakteristikTkA}. Often we restrict functions $\phi$ defined on $[0, T] \times \bar{\mathcal{I}}$ on these characteristics, for which we will use the notation $\restr{\phi}_{\chak(t_0)}(h) \coloneqq \phi(t_0 + h, h)$, for all parameters $h \in [\max\set{-t_0, 0}, \min\set{T-t_0, \amax}]$.

In estimates, we frequently use the notation $A \leqc B$ for an inequality of the form $A \leq c B$, where $c$ is a generic constant independent of the quantities to be estimated.

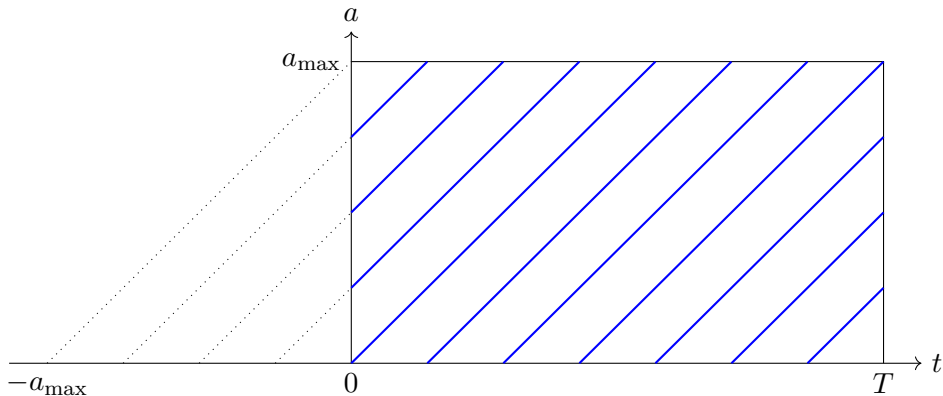
\begin{figure}
	\centering
	\begin{tikzpicture}
		\draw [->] (0,0) node[below]{0} -- (0,4.4) node[above]{$a$};
		\draw [->] (-4.5,0) -- (7.5,0) node[right]{$t$};
		\draw (0,4) node[left]{$\amax$} -- (7,4) -- (7,0) node[below]{$T$};
		\draw [dotted] (-4,0) node[below]{$-\amax$} -- (0,4);
		\draw [dotted] (-3,0) -- (0,3); \draw [blue, thick] (0,3) -- (1,4);
		\draw [dotted] (-2,0) -- (0,2); \draw [blue, thick] (0,2) -- (2,4);
		\draw [dotted] (-1,0) -- (0,1); \draw [blue, thick] (0,1) -- (3,4);
		\draw [blue, thick] (0,0) -- (4,4);
		\draw [blue, thick] (1,0) -- (5,4);
		\draw [blue, thick] (2,0) -- (6,4);
		\draw [blue, thick] (3,0) -- (7,4);
		\draw [blue, thick] (4,0) -- (7,3);
		\draw [blue, thick] (5,0) -- (7,2);
		\draw [blue, thick] (6,0) -- (7,1);
	\end{tikzpicture}
	\caption{Schematic image of the characteristics (blue lines). For every $t_0 \in [-\amax, T]$ we obtain a characteristic: If $t_0 \geq 0$, it starts in $(t, a) = (t_0, 0)$, for $t_0 < 0$ it starts in $(t, a) = (0, -t_0)$.}
	\label{fig:CharakteristikTkA}
\end{figure}

\section{Model derivation}\label{sec:RelMotivation}
To illustrate the relaxation process, we first consider the relation between the heat equation and the telegrapher's equation. Let $\Omega \subset \R^2$ be a domain with sufficiently smooth boundary $\partial \Omega$ and outer normal vector $\nu$, and let $u$ be some time- and space-dependent quantity. Then the change of $u$ is modeled by the continuity equation \begin{equation} \label{eq:ContEq}
	u_t + \div Q = f,
\end{equation}
where $Q$ is the flux of $u$, and $f$ is a source term (cf. \cite[eq. (1.1)]{LMPS}). If we assume that the flux is proportional to the negative gradient of the quantity, i.e. the quantity moves from higher to lower concentrations, and the bigger the difference the faster the movement, the flux takes the form \begin{equation} \label{eq:unrelFlux}
	Q = -\sigma \nabla u
\end{equation} where $\sigma > 0$ is some diffusion coefficient, and the model decouples to the diffusion equation \begin{equation}
	u_t - f = -\div (- \sigma \nabla u) = \sigma \laplace u.
\end{equation}
A suitable boundary condition is \begin{equation}\label{eq:FluxBoundary}
	\nu \cdot Q = 0,
\end{equation} i.e. there is no flux into or out of $\Omega$.  In absence of source terms $f$, this condition leads to conservation of quantity, since by the divergence theorem \begin{equation}
	\dv{t} \int_\Omega u(x) \dd{x} = -\int_\Omega \div Q(x) \dd{x} = -\oint_{\partial \Omega} \nu(x) \cdot Q(x) \dd{A(x)} = 0.
\end{equation}
Multiplying \cref{eq:unrelFlux} by $\nu$ and using $\nu \cdot Q = 0$ yields $0 = \nu \cdot (-\sigma \nabla u)$, which is equivalent to the Neumann condition $\partial_\nu u = 0$.

Diffusion models obtained in this way have the property of \emph{infinite propagation speed}, meaning that if initially the quantity is solely concentrated in a subdomain of $\Omega$ and zero everywhere else, for any positive time, no matter how small, the concentration is nonzero over the whole of $\Omega$. While this is acceptable for e.g. heat dissipation, this property is unrealistic when the quantity $u$ describes things like moving populations that clearly move at finite speeds. A common way to bypass this problem is to introduce a delay, which amounts to modifying the flux equation from \cref{eq:unrelFlux} to \begin{equation}
	Q(t + \tau) = - \sigma \nabla u(t),
\end{equation}
where $\tau > 0$ is the time the quantity needs to perceive the gradient and move accordingly. Unfortunately, it has been shown (cf. \cite{DreherQRacke}, or \cite{RackeDelay} for a more complex model) that this model is not well-posed. However, by formally applying the first-order Taylor expansion $Q(t + \tau) \approx Q(t) + \tau Q_t(t)$, together with \cref{eq:ContEq} the complete model becomes \begin{equation}\label{eq:telegraphersEquation}
	\left\{\begin{aligned}
		u_t + \div Q &= f\\
		(1 + \tau \partial_t) Q &= -\sigma \nabla u
	\end{aligned}\right. \quad \Rightarrow \quad (1 + \tau \partial_t) (u_t - f) = -(1 + \tau \partial_t) \div Q = \sigma \laplace u,
\end{equation}
which is a damped wave equation, or telegrapher's equation. This now hyperbolic equation can be shown to be well-posed and in addition to possess a finite speed of propagation (cf. \cite[Chapter 3]{RackeNonlinear}). This property is favorable in the kind of model we consider in this work. As in \cref{eq:FluxBoundary}, we require the boundary condition $\nu \cdot Q = 0$, then since also $\nu \cdot Q_t = 0$ we can conclude that \begin{equation}
	0 = \nu \cdot (1 + \tau \partial_t) Q = -\sigma \nu \cdot \nabla u = - \sigma \partial_\nu u,
\end{equation}
and we again have Neumann boundary conditions.

If we want to apply the relaxation from \cref{eq:telegraphersEquation} to \cref{eq:IntroState}, we have to take into account that the population ages with time. Hence, $\tau$ has to be introduced in both the time and the age variable, and a formal Taylor equation yields \cref{eq:RelInit}: \begin{equation} 
	(1 + \tau \delta) \big(\delta y(t, a, x) + L(a, x) y(t, a, x) + \Lambda(a, x, y(t, \cdot)) y(t, a, x) \big) = \sigma(a) \laplace y(t, a, x).
\end{equation}
Since this is now a second-order equation, we need additional initial/boundary conditions where $t$ or $a$ is zero. For the boundary at $t = 0$, we can simply impose another initial condition on $\delta y$. The $a = 0$-boundary is more complex because of the implicit condition \begin{equation} \label{eq:B0}
	y(t, a = 0, x) = \int_0^\amax \beta(\alpha, x) y(t, \alpha, x) \dd{\alpha}
\end{equation} we saw in \cref{eq:IntroBirth}. \textcolor{black}{In the unrelaxed equation, however, applying the Laplace operator on \cref{eq:B0} and using \cref{eq:IntroState} yields \begin{align}
	\MoveEqLeft (\delta y + L(a, x) y + \Lambda(a, x, y) y)(t, a = 0, x) = \sigma(0) \laplace \int_0^\amax \beta(\alpha, x) y(t, \alpha, x) \dd{\alpha}\\
	&= \sigma(0) \int_0^\amax \laplace \beta(\alpha, x) y(t, \alpha, x) + 2 \nabla \beta(\alpha, x) \cdot \nabla y(t, \alpha, x) + \beta(\alpha, x) \laplace y(t, \alpha, x) \dd{\alpha}\\
	&= \sigma(0) \int_0^\amax \laplace \beta(\alpha, x) y(t, \alpha, x) + 2 \nabla \beta(\alpha, x) \cdot \nabla y(t, \alpha, x)\\
	&\quad + \beta(\alpha, x) \sigma(\alpha)\inv (\delta y + L(\alpha, x) y + \Lambda(\alpha, x, y) y)(t, \alpha, x) \dd{\alpha},
	\end{align}
and plugging in \cref{eq:B0} again yields after some restructuring \begin{align}
	(\delta y)(a = 0) &= \int_0^\amax \qty(\sigma(0) \laplace \beta(\alpha, x) + \sigma(0) \beta(\alpha, x) \sigma(\alpha)\inv L(\alpha, x) - L(0, x) \beta(\alpha, x)) y(t, \alpha, x)\\
	&\quad + 2 \sigma(0) \nabla \beta(\alpha, x) \cdot \nabla y(t, \alpha, x) + \sigma(0) \beta(\alpha, x) \sigma(\alpha)\inv (\delta y)(t, \alpha, x)\\
	&\quad + \qty(\sigma(0) \beta(\alpha, x) \sigma(\alpha)\inv \Lambda(\alpha, x, y) - \Lambda(0, x, y) \beta(\alpha, x)) y(t, \alpha, x) \dd{\alpha}. \label{eq:horridCondition}
	\end{align}
Let \begin{equation}
	\begin{aligned}
		\beta_1(\alpha, x) &\coloneqq \sigma(0) \beta(\alpha, x) \sigma(\alpha)\inv,\\
		\beta_L(\alpha, x) &\coloneqq \sigma(0) \laplace \beta(\alpha, x) + \beta_1(\alpha, x) L(\alpha, x) - L(0, x) \beta(\alpha, x),\\
		\beta_\nabla(\alpha, x) &\coloneqq 2 \sigma(0) \nabla \beta(\alpha, x),
	\end{aligned} \label{eq:shorthands}
\end{equation} then we can shorten the above expression to \begin{align}
	(\delta y)(a = 0) &= \int_0^\amax \beta_L(\alpha, x) y(t, \alpha, x) + \beta_\nabla(\alpha, x) \cdot \nabla y(t, \alpha, x) + \beta_1(\alpha, x) (\delta y)(t, \alpha, x)\\
	&\quad + \qty(\beta_1(\alpha, x) \Lambda(\alpha, x, y) - \Lambda(0, x, y) \beta(\alpha, x)) y(t, \alpha, x) \dd{\alpha}.
	\end{align}
This condition holds, at least formally, for the unrelaxed model, and it turns out to be a suitable choice for a boundary condition for the relaxed model. If we introduce additional terms $g_0$, $\tilde{g}_1$ in \cref{eq:B0} resp. \cref{eq:horridCondition}, for example to account for immigration or comparison between models, we arrive at \begin{equation}
	y(t, a = 0, x) = \int_0^\amax \beta(\alpha, x) y(t, \alpha, x) \dd{\alpha} + g_0(t, x),
	\end{equation}
and \cref{eq:horridCondition} turns into \begin{align}
	(\delta y)(a = 0) &= \int_0^\amax \qty(\sigma(0) \laplace \beta(\alpha, x) + \sigma(0) \beta(\alpha, x) \sigma(\alpha)\inv L(\alpha, x) - L(0, x) \beta(\alpha, x)) y(t, \alpha, x)\\
	&\quad + 2 \sigma(0) \nabla \beta(\alpha, x) \cdot \nabla y(t, \alpha, x) + \sigma(0) \beta(\alpha, x) \sigma(\alpha)\inv (\delta y)(t, \alpha, x)\\
	&\quad + \qty(\sigma(0) \beta(\alpha, x) \sigma(\alpha)\inv \Lambda(\alpha, x, y) - \Lambda(0, x, y) \beta(\alpha, x)) y(t, \alpha, x) \dd{\alpha}\\
	&\quad + \sigma(0) \laplace g_0(t, x) - L(0, x) g_0(t, x) - \Lambda(0, y) g_0(t, x) + \tilde{g}_1(t, x),
	\end{align}
or \begin{align}
	(\delta y)(a = 0) &= \int_0^\amax \beta_L(\alpha, x) y(t, \alpha, x) + \beta_\nabla(\alpha, x) \cdot \nabla y(t, \alpha, x) + \beta_1(\alpha, x) (\delta y)(t, \alpha, x)\\
	&\quad + \qty(\beta_1(\alpha, x) \Lambda(\alpha, x, y) - \Lambda(0, x, y) \beta(\alpha, x)) y(t, \alpha, x) \dd{\alpha} - \Lambda(0, y) g_0(t, x) + g_1(t, x)
	\end{align}
with the conventions of \cref{eq:shorthands} and \begin{equation}
	g_1(t, x) \coloneqq \sigma(0) \laplace g_0(t, x) - L(0, x) g_0(t, x) + \tilde{g}_1(t, x).
	\end{equation}
For greater generality, we will assume this boundary condition for arbitrary functions $\beta_1$, $\beta_L$, $\beta_\nabla$ for the unrelaxed equation. We note that if $\beta$ does not depend on $x$, \cref{eq:horridCondition} can be stated in the symmetric form \begin{equation}
	(\delta y + L y + \Lambda(y) y) (t, a = 0, x) = \int_0^\amax \beta_1(\alpha, x) (\delta y + L y + \Lambda(y) y)(t, \alpha, x) \dd{\alpha}.
	\end{equation}}
\section{Existence of solutions} \label{sec:Existence}					
\subsection{The linearized equation with fixed birth numbers} \label{sec:RelLinFix}
\begin{subequations}\label{eq:RelLinEqn}
	This section is devoted to a proof of existence and uniqueness of solutions to \cref{eq:relaxedEquation}. Similar to the procedure in the work \cite{AzmiSchlosser} which discussed the model where $\tau = 0$, we start by considering a linearized, inhomogeneous version of \cref{eq:relaxedEquation} given by \begin{equation+}
		(1 + \tau \delta) (\delta y(t, a, x) + L(a, x) y(t, a, x)) = \sigma(a) \laplace y(t, a, x) + f(t, a, x) 
	\end{equation+}
	with the usual boundary conditions \begin{equation+}
		\quad \partial_\nu y(x \in \partial \Omega) = 0,
	\end{equation+}
	the desired initial values \begin{equation+}
		y(t = 0) = y_0, \quad \delta y(t = 0) = y_1,
	\end{equation+}
	and explicit birth numbers \begin{equation+}
		y(a = 0) = B_0, \quad \delta y(a = 0) = B_1.
	\end{equation+}
\end{subequations}
Let $t_0 \in (-\amax, T)$. By introducing characteristics and letting $v \coloneqq \restr{y}_{\chak(t_0)}$, we obtain the second-order system \begin{equation}\label{eq:RelCharEqn}
	\begin{split}
		(1 + \tau \partial_h) (v_h(h, x) + L(h, x) v(h, x)) &= \sigma(h) \laplace v(h, x) + f(t_0 + h, h, x) \text{ in } [(-t_0)^+, \amax] \times \Omega,\\
		v(h = (-t_0)^+) &= \begin{cases}
			y_0(-t_0), & t_0 < 0\\
			B_0(t_0), & t_0 > 0
		\end{cases} \eqqcolon v_0 \text{ in } \Omega,\\
		v_h(h = (-t_0)^+) &= \begin{cases}
			y_1(-t_0), & t_0 < 0\\
			B_1(t_0), & t_0 > 0
		\end{cases} \eqqcolon v_1 \text{ in } \Omega,\\
		\partial_\nu v(h) &= 0 \text{ in } [(-t_0)^+, \amax] \times \partial\Omega.
	\end{split}
\end{equation}
An application of the chain rule yields \begin{equation} \label{eq:chainRule}
	(1 + \tau \partial_h)(v_h + L(h) v) = \tau v_{hh} + (1 + \tau L(h)) v_h + (L(h) + \tau L_h(h)) v,
\end{equation}
which indicates that we need to assume that $L$ is at least once continuously differentiable. 
\begin{Ann}\label{ass:CharRelax}
	We assume \begin{enumerate}
		\item $\tau \in (0, \tau_{\text{max}})$ for some $0 <\tau_{\text{max}} < \infty$.
		\item $y_0 \in \vilbert$, $y_1 \in \hilbert$, $B_0 \in L^2((0, T), V)$, $B_1 \in L^2((0, T), H)$ and $f \in L^2((0, T), \hilbert)$.
		\item $L \in C^1(\bar{\mathcal{I}}, L^\infty(\Omega))^{n \times n}$ and its entries are uniformly bounded away from zero with respect to both age and space.
		\item $\sigma \in C^1(\bar{\mathcal{I}}, \R)^{n \times n}$ is a diagonal matrix whose entries are uniformly bounded away from zero with respect to age.
	\end{enumerate}
\end{Ann}
With \cref{eq:chainRule} and the weak formulation of the Laplace operator with Neumann boundary conditions, we arrive at the following weak formulation for \cref{eq:RelCharEqn}: for all $w \in V$ we require that \begin{gather}
	\dv{h} \skp{\tau v_h(h), w}_{H} + \skp{v_h(h) + \tau L(h) v_h(h), w}_H + \skp{L(h) v(h) + \tau L_h(h) v(h), w}_H \\
	+ \skp{\sigma(h) \nabla v(h), \nabla w}_{H^d} = \skp{f(t_0 + h, h), w}_H
\end{gather}
holds for almost all $h \in ((-t_0)^+, \amax)$.

\begin{Thm}\label{thm:exRelChar}
	Under \Cref{ass:CharRelax}, for every $t_0 \in (-\amax, T)$ there exists a unique weak solution $v \in C([(-t_0)^+, \amax], V) \cap C^1([(-t_0)^+, \amax], H) \cap C^2([(-t_0)^+, \amax], V')$ of the relaxed equation \eqref{eq:RelCharEqn} on characteristics.
\end{Thm}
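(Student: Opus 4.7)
The plan is to treat the equation on the characteristic line as a damped second-order hyperbolic Cauchy problem in the variable $h$ and deploy a standard Faedo--Galerkin argument. Expanding the chain rule as in \cref{eq:chainRule}, equation \eqref{eq:RelCharEqn} reads
\begin{equation}
\tau v_{hh} + (1 + \tau L(h)) v_h + (L(h) + \tau L_h(h)) v - \sigma(h) \laplace v = f(t_0 + h, h),
\end{equation}
with Neumann boundary data on $\partial \Omega$ and the initial conditions $(v_0, v_1) \in V \times H$. Because $\tau > 0$, the principal part is indeed hyperbolic with bounded and continuously differentiable coefficients by \Cref{ass:CharRelax}, so the situation is formally identical to a classical linear wave equation with variable coefficients to which Galerkin methods apply.

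First I would fix an orthonormal basis $\{w_k\}_{k \in \N}$ of $H$ consisting of eigenfunctions of the Neumann Laplacian (which also forms an orthogonal basis of $V$) and construct Galerkin approximations $v_m(h) = \sum_{k=1}^m c_k^m(h) w_k$ by testing the weak formulation against $w_1, \dots, w_m$. This yields a linear second-order ODE system in the coefficient vector with continuous coefficients (thanks to $L \in C^1$ and $\sigma \in C^1$), so existence of $c_k^m \in C^2([(-t_0)^+, \amax])$ follows from standard ODE theory. Next I would derive the energy estimate by testing with $v_{m,h}$ itself: the troublesome term $\skp{\sigma(h) \nabla v_m, \nabla v_{m,h}}_{H^d}$ is rewritten as $\tfrac{1}{2} \tfrac{d}{dh} \skp{\sigma(h) \nabla v_m, \nabla v_m}_{H^d} - \tfrac{1}{2} \skp{\sigma_h(h) \nabla v_m, \nabla v_m}_{H^d}$, producing the natural energy
\begin{equation}
E_m(h) \coloneqq \tfrac{\tau}{2} \norm{v_{m,h}(h)}_H^2 + \tfrac{1}{2} \skp{\sigma(h) \nabla v_m(h), \nabla v_m(h)}_{H^d}.
\end{equation}
The remaining terms are bounded pointwise by $\norm{v_m}_H^2 + \norm{v_{m,h}}_H^2 + \norm{f}_H^2$ using the uniform bounds on $L$, $L_h$ and Cauchy--Schwarz; an auxiliary estimate for $\norm{v_m(h)}_H$ is obtained by integrating $v_{m,h}$ from the initial slice. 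A Gronwall argument then yields a bound for $\sup_h E_m(h) + \sup_h \norm{v_m(h)}_V^2$ depending only on $\norm{v_0}_V$, $\norm{v_1}_H$ and $\norm{f}_{L^2((\,(-t_0)^+,\amax), H)}$, uniformly in $m$.

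With these uniform bounds in hand, standard weak-$\star$ compactness produces a subsequence converging to some $v$ with $v \in L^\infty(V)$, $v_h \in L^\infty(H)$, which can be passed to the limit in the weak formulation by linearity; this gives a weak solution. Reading off $v_{hh}$ from the equation (writing $v_{hh} = \tau^{-1}[\sigma(h) \laplace v - (1 + \tau L) v_h - (L + \tau L_h) v + f]$ and using that $\laplace : V \to V'$ is continuous) yields $v_{hh} \in L^2(V')$, and then the standard Lions--Magenes argument on hyperbolic problems (see e.g. \cite{RackeNonlinear}) upgrades the regularity to $v \in C(V)$, $v_h \in C(H)$ and $v_{hh} \in C(V')$. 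Uniqueness is immediate from the same energy estimate applied to the difference of two solutions with zero data.

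The main obstacle is the appearance of the age-dependent diffusion coefficient $\sigma(h)$: unlike the constant-coefficient case, it prevents a clean conservation-type identity and instead forces the extra commutator term $\skp{\sigma_h \nabla v, \nabla v}_{H^d}$ into the energy balance. This is absorbed using the $C^1$ regularity of $\sigma$, but it is the reason one must assume $L, \sigma \in C^1$ rather than only $L^\infty$ in \Cref{ass:CharRelax}. A secondary technical point is the strong continuity $v \in C(V)$ (as opposed to weak continuity), which requires the usual trick of showing $h \mapsto E(h)$ is continuous, not merely upper semicontinuous, via time reversal of the equation.
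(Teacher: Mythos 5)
Your argument is correct, but it takes a genuinely more hands-on route than the paper. The paper does not run the Galerkin machinery itself: it rewrites \eqref{eq:RelCharEqn} via \cref{eq:chainRule} as an abstract second-order evolution equation $\mathcal{C} v_{hh} + \mathcal{B} v_h + (\mathcal{A}_0 + \mathcal{A}_1) v = f$ with $\mathcal{A}_0 = -\sigma\laplace$, $\mathcal{A}_1 = L + \tau L_h$, $\mathcal{B} = I + \tau L$, $\mathcal{C} = \tau I$, verifies the hypotheses of the Dautray--Lions framework ($\mathcal{A}_0 \in C^1([0,\amax], L(V,V'))$ generating a hermitian, coercive form; $\mathcal{B}$, $\mathcal{C} \in C^1([0,\amax], L(H))$; $\skp{\mathcal{C}u,u}_H = \tau\norm{u}_H^2$), and then cites \cite[Thm.~XVIII.5.3 and 5.4]{DautrayLions}. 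Your Faedo--Galerkin construction with the energy $E_m$, the commutator term $\skp{\sigma_h \nabla v_m, \nabla v_m}_{H^d}$, Gronwall, weak-$\star$ compactness, and the time-reversal upgrade to strong continuity is essentially the proof of that abstract theorem specialized to this equation; it buys self-containedness and makes explicit why $\tau>0$ and $L, \sigma \in C^1$ are needed, at the cost of length. The paper's route has the practical advantage that the companion result \cite[Lem.~XVIII.5.7]{DautrayLions} immediately yields the $\tau$-explicit bound of \Cref{Lem:CharEstimate}, which is essential later for \Cref{thm:tauConvergence}; if you proceed by hand you must track the $\tau$-dependence of your Gronwall constant with the same care (in particular, dividing the equation by $\tau$ before estimating would ruin it). One step to tighten: for uniqueness of \emph{weak} solutions you cannot literally test the difference of two solutions with its own $h$-derivative, since that derivative lies only in $H$ and the pairing $\skp{\sigma\nabla v, \nabla v_h}_{H^d}$ is not defined; the standard fix is the Ladyzhenskaya--Lions test function $w(h) = \int_h^s v(r)\dd{r}$ or a mollification in $h$ --- routine, but it should be stated rather than called immediate.
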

\begin{proof}
	We introduce the operators \begin{align}
		\mathcal{A}_0 &\coloneqq -\sigma \laplace,& \mathcal{A}_1 &\coloneqq L + \tau L_h,&	\mathcal{B} &\coloneqq I + \tau L,& \mathcal{C} &\coloneqq \tau I,
	\end{align}
	where $I$ denotes the identity operator. Note that none of these operators depends on $t_0$. The assumptions ensure that $v_0 \in V$, $V_1 \in H$ and $\restr{f}_{\chak(t_0)} \in L^2(H)$. Further, they show that $\mathcal{A}_0 \in C^1([0, \amax], L(V, V'))$, $\mathcal{A}_1 \in C([0, \amax], L(H, H))$ (which is stronger than necessary), and $\mathcal{B}$, $\mathcal{C} \in C^1([0, \amax], L(H, H))$. Furthermore, it is easy to verify that the form generated by $\mathcal{A}_0$ is hermitian and coercive. Further, we note that $\skp{\mathcal{C}(t) u, u}_H = \tau \norm{u}_H^2$, and the form generated by $\mathcal{C}$ is obviously hermitian. Now the claim follows by \cite[Thm. XVIII.5.3 and 5.4]{DautrayLions}.
\end{proof}

It will become very important later, in particular when considering convergence issues in \Cref{thm:tauConvergence}, to precisely track the $\tau$-dependence in energy estimates. The following lemma, which can be proved using \cite[Lem. XVIII.5.7]{DautrayLions}, is the basis to this.
\begin{Lem}\label{Lem:CharEstimate}
	Let $s \coloneqq (-t_0)^+$ and $\tau_{\text{max}} > 0$. Then for all $h \in [s, \amax]$ and all $\tau \in (0, \tau_{\text{max}})$ the solution $v$ from \Cref{thm:exRelChar} satisfies the estimate \begin{equation}
		\tau \norm{v_h(h)}_H^2 + \norm{v(h)}_V^2 \leqc \tau \norm{v_1}_H^2 + \norm{v_0}_V^2 + \norm{f}_{L^2((s, \amax), H)}^2
	\end{equation}
	with a constant that does not depend on $\tau$ or $s$, but possibly on $\tau_{\text{max}}$.
\end{Lem}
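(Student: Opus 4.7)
The plan is to perform the standard energy estimate for the second-order characteristic equation \eqref{eq:RelCharEqn}, with careful bookkeeping of $\tau$-factors so that the final constant does not depend on $\tau$. In the abstract notation of \Cref{thm:exRelChar} the equation reads $\mathcal{C} v_{hh} + \mathcal{B} v_h + \mathcal{A}_1 v + \mathcal{A}_0 v = f$, and an estimate of exactly the structure claimed is delivered by \cite[Lem. XVIII.5.7]{DautrayLions}; the task is therefore to translate its abstract bound into the $\tau$-explicit form stated here and to verify that the constants stay uniform as $\tau \to 0$ and as $s$ varies.

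Concretely, I would introduce the natural wave-type energy
\begin{equation}
E(h) \coloneqq \frac{\tau}{2}\norm{v_h(h)}_H^2 + \frac{1}{2}\skp{\sigma(h) \nabla v(h), \nabla v(h)}_{H^d} + \frac{1}{2}\norm{v(h)}_H^2,
\end{equation}
which by the coercivity and boundedness of $\sigma$ from \Cref{ass:CharRelax} satisfies $E(h) \sim \tau \norm{v_h(h)}_H^2 + \norm{v(h)}_V^2$ with equivalence constants independent of $\tau$ and $s$. Testing the weak formulation with $w = v_h$ (justified through the Galerkin scheme underlying \cite[Thm. XVIII.5.3]{DautrayLions}) and using $\frac{d}{dh}\frac{1}{2}\norm{v}_H^2 = \skp{v_h, v}_H$ together with the chain rule for $\skp{\sigma \nabla v, \nabla v}$, the spatial derivative terms $\pm\skp{\sigma\nabla v, \nabla v_h}$ cancel and one arrives at a balance of the schematic form
\begin{equation}
E'(h) + \norm{v_h(h)}_H^2 = \skp{f, v_h}_H - \tau\skp{L v_h, v_h}_H - \skp{(L + \tau L_h)v, v_h}_H + \skp{v_h, v}_H + \frac{1}{2}\skp{\sigma_h \nabla v, \nabla v}_{H^d}.
\end{equation}

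Each term on the right is then bounded by Cauchy--Schwarz and Young's inequality so that every factor of $\tau$ that appears is paired with a quantity already contained in $E$; in particular $\tau\lvert\skp{L v_h, v_h}\rvert \leq 2\norm{L}_\infty E(h)$, the mixed terms are split as $\norm{v}_H\norm{v_h}_H \leq \tfrac{1}{4}\norm{v_h}_H^2 + C\norm{v}_H^2 \leq \tfrac14\norm{v_h}_H^2 + 2CE(h)$, and the source contribution satisfies $\lvert\skp{f, v_h}\rvert \leq \norm{f}_H^2 + \tfrac{1}{4}\norm{v_h}_H^2$; the $\sigma_h$-term is likewise $\leq C E(h)$ using the $C^1$-bound on $\sigma$. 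The three $\tfrac{1}{4}\norm{v_h}_H^2$-pieces sum to $\tfrac{3}{4}\norm{v_h}_H^2$ and are absorbed by the dissipation on the left-hand side, so the remainder is the differential inequality $E'(h) \leq C E(h) + \norm{f(h)}_H^2$ with $C$ depending only on $\tau_{\max}$, $\norm{L}_{C^1}$, $\norm{\sigma}_{C^1}$, and $\amax$, but not on $\tau$ or $s$.

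Gronwall's inequality on $[s, \amax]$ then yields $E(h) \leq e^{C\amax}\bigl(E(s) + \norm{f}_{L^2((s,\amax), H)}^2\bigr)$, and since $E(s) \leqc \tau\norm{v_1}_H^2 + \norm{v_0}_V^2$ by the definitions of $v_0, v_1$ together with the equivalence $E \sim \tau\norm{v_h}_H^2 + \norm{v}_V^2$, this is exactly the claimed bound. The main obstacle is the bookkeeping in the energy identity rather than any deep analytic difficulty: each $\tau$-weighted term must be absorbed directly into $E$ instead of into the natural dissipation $\norm{v_h}_H^2$, since the latter would implicitly require a division by $\tau$ and destroy uniformity in the regime $\tau \to 0$ that is later needed for \Cref{thm:tauConvergence}.
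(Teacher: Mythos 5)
Your proof is correct and is essentially the same argument as the paper's, which simply invokes the abstract energy estimate of \cite[Lem.\ XVIII.5.7]{DautrayLions} for the operator family $\mathcal{A}_0, \mathcal{A}_1, \mathcal{B}, \mathcal{C}$ set up in \Cref{thm:exRelChar}; you have merely unpacked that Galerkin-based energy identity and verified explicitly that the constants stay uniform in $\tau$ and $s$. The $\tau$-bookkeeping (absorbing $\tau$-weighted terms into $E$ and only the unweighted $\norm{v_h}_H^2$ pieces into the damping) is exactly the point the paper relies on, so no gap.
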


\begin{Def}
	Let \begin{itemize}
		\item $M(t, s) v_0$ the value $v(t)$ of the solution $v$ to \cref{eq:RelCharEqn} with initial conditions $v(s) = v_0$, $v_h(s) = 0$ and $f \equiv 0$.
		\item $N(t, s) v_1$ the value $v(t)$ of the solution $v$ to \cref{eq:RelCharEqn} with initial conditions $v(s) = 0$, $v_h(s) = v_1$ and $f \equiv 0$.
		\item $P(t, s) f$ the value $v(t)$ of the solution $v$ to \cref{eq:RelCharEqn} with initial conditions $v(s) = 0$, $v_h(s) = 0$.
	\end{itemize}
	By $M'$, $N'$, $P'$, we denote the respective time derivative. Note that the names $M$ and $N$ are standard, cf. \cite[p. 383]{EngelNagel}, \cite{MelnikovaFilinkov} or \cite{Zheng}.
\end{Def}
\begin{Kor}\label{lem:MNestimate}
	Let $s \in (0, \amax)$, $v_0 \in V$, $v_1 \in H$, $f \in L^2((s, \amax), H)$. Then we have \begin{align}
		&M(\cdot, s) v_0 \in C([s, \amax], V) \cap C^1([s, \amax], H), \quad \norm{M(t, s) v_0}_V^2 + \tau \norm{M'(t, s) v_0}_H^2 \leqc \norm{v_0}_V^2,\\*
		&N(\cdot, s) v_1 \in C([s, \amax], V) \cap C^1([s, \amax], H), \quad \norm{N(t, s) v_1}_V^2 + \tau \norm{N'(t, s) v_1}_H^2 \leqc \tau \norm{v_1}_H^2,\\*
		&P(\cdot, s) f \in C([s, \amax], V) \cap C^1([s, \amax], H), \quad \norm{P(t, s) f}_V^2 + \tau \norm{P'(t, s) f}_H^2 \leqc \norm{f}_{L^2((s, \amax), H)}^2,
	\end{align}
	and all constants can be chosen independently of $\tau$, $t$ or $s$.
\end{Kor}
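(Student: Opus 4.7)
The plan is to view this corollary as a direct packaging of \Cref{thm:exRelChar} and \Cref{Lem:CharEstimate} applied to three canonical initial/source configurations. Since $s \in (0, \amax)$, I would choose $t_0 \coloneqq -s$, so that $t_0 \in (-\amax, 0)$ and $(-t_0)^+ = s$. Then $M(\cdot, s) v_0$, $N(\cdot, s) v_1$ and $P(\cdot, s) f$ are, by construction, the unique weak solutions of \cref{eq:RelCharEqn} on the interval $[s, \amax]$ with data triples $(v_0, 0, 0)$, $(0, v_1, 0)$ and $(0, 0, f)$, respectively, and \Cref{ass:CharRelax} is satisfied in every case by the hypotheses on $v_0$, $v_1$, and $f$.

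With this identification, the regularity claim $v \in C([s, \amax], V) \cap C^1([s, \amax], H)$ is immediate from \Cref{thm:exRelChar} (whose additional $C^2([s, \amax], V')$ component is simply discarded). For the three energy estimates, I would invoke \Cref{Lem:CharEstimate}, which supplies the uniform-in-$(h, \tau, s)$ bound
\[
\tau \norm{v_h(h)}_H^2 + \norm{v(h)}_V^2 \leqc \tau \norm{v_1}_H^2 + \norm{v_0}_V^2 + \norm{f}_{L^2((s, \amax), H)}^2.
\]
Plugging in the vanishing data for each of the three problems and taking the supremum over $h \in [s, \amax]$ directly yields the three displayed inequalities, with constants inherited verbatim from the lemma and therefore independent of $\tau$, $t$, and $s$.

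I do not foresee any genuine obstacle: the corollary is essentially a change of notation. The only mild bookkeeping points are that the operator notation is well-defined, which follows from uniqueness in \Cref{thm:exRelChar}, and that by linearity of the weak formulation a general solution with data $(v_0, v_1, f)$ decomposes as $M(\cdot, s) v_0 + N(\cdot, s) v_1 + P(\cdot, s) f$ --- presumably the very reason these three building-block operators are singled out here for use in the subsequent Duhamel-type arguments.
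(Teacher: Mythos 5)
Your proposal is correct and matches the paper's approach exactly: the paper's proof is the single line that the corollary is a direct consequence of \Cref{Lem:CharEstimate}, with the regularity coming from \Cref{thm:exRelChar}, which is precisely the specialization-of-data argument you give. The extra bookkeeping you mention (well-definedness via uniqueness, linearity of the decomposition) is sound but not needed beyond what the paper already records.
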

\begin{proof}
	This is a direct consequence of \Cref{Lem:CharEstimate}.
\end{proof}

Now we can reassemble the solution on the various characteristics. This gives the expression \begin{equation}\label{eq:ReprRelSolution}
	y(t, a) = \begin{cases}
		M(a, a-t) y_0(a-t) + N(a, a-t) y_1(a-t) + P(a, a-t) \restr{f}_{\chak(t-a)}, & t \leq a,\\
		M(a, 0) B_0(t-a) + N(a, 0) B_1(t-a) + P(a, 0) \restr{f}_{\chak(t-a)}, & t \geq a.
	\end{cases}
\end{equation}
It can be shown that this expression is in fact well-defined and independent of the choice of representatives for $f$, $y_0$, $y_1$, $B_0$ and $B_1$.

The next step is to verify that the function $y$ defined by \cref{eq:ReprRelSolution} is indeed a weak solution to \cref{eq:RelLinEqn}. To this end, we define a new function $v$ by \begin{equation} \label{eq:vEquation}
	v(t, a) = \begin{cases}
		M'(a, a-t) y_0(a-t) + N'(a, a-t) y_1(a-t) + P'(a, a-t) \restr{f}_{\chak(t-a)}, & t \leq a,\\
		M'(a, 0) B_0(t-a) + N'(a, 0) B_1(t-a) + P'(a, 0) \restr{f}_{\chak(t-a)}, & t \geq a.
	\end{cases}
\end{equation}

\begin{Thm} \label{thm:RelSolution}
	Assume that \Cref{ass:CharRelax} holds and define $y$ by \cref{eq:ReprRelSolution} and $v$ by \cref{eq:vEquation}. Then we have $y \in L^\infty((0, T), \vilbert)$, $v \in L^\infty((0, T), \hilbert)$, and the estimate
	\begin{equation}
		\tau \norm{v(t)}_\hilbert^2 + \norm{y(t)}_\vilbert^2 \leqc \norm{B_0}_{L^2((0, t), V)}^2 + \tau \norm{B_1}_{L^2((0, t), H)}^2 + \norm{y_0}_\vilbert^2 + \tau \norm{y_1}_\hilbert^2 + \norm{f}_{L^2((0, t), \hilbert)}^2
	\end{equation}
	for all $t \in [0, T]$.
\end{Thm}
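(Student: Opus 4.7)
The plan is to exploit the characteristic representation \cref{eq:ReprRelSolution} together with the pointwise-in-$a$ bounds from \Cref{lem:MNestimate} (combined with causality, so that the $f$-contribution on the characteristic only involves values up to the evaluation point) to produce the integrated estimate. For fixed $t \in [0, T]$, I split the $\vilbert$-norm as
\begin{equation*}
\norm{y(t)}_\vilbert^2 = \int_0^{\min\set{t,\amax}} \norm{y(t, a)}_V^2 \dd{a} + \int_{\min\set{t,\amax}}^{\amax} \norm{y(t, a)}_V^2 \dd{a},
\end{equation*}
corresponding to the two branches of \cref{eq:ReprRelSolution}. On the first piece, where $y(t, a) = M(a, 0) B_0(t-a) + N(a, 0) B_1(t-a) + P(a, 0) \restr{f}_{\chak(t-a)}$, \Cref{lem:MNestimate} with initial characteristic time $0$ yields the pointwise bound
\begin{equation*}
\norm{y(t, a)}_V^2 \leqc \norm{B_0(t-a)}_V^2 + \tau \norm{B_1(t-a)}_H^2 + \norm{\restr{f}_{\chak(t-a)}}_{L^2((0, a), H)}^2
\end{equation*}
with a constant independent of $\tau$, $t$, and $a$. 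Integrating in $a$ and applying the substitution $\eta = t - a$ converts the first two terms into $\norm{B_0}_{L^2((0, t), V)}^2$ and $\tau \norm{B_1}_{L^2((0, t), H)}^2$.

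The source term is the one step requiring care. Unfolding the characteristic $L^2$-norm yields
\begin{equation*}
\int_0^{\min\set{t,\amax}} \int_0^a \norm{f(t-a+h, h)}_H^2 \dd{h} \dd{a},
\end{equation*}
and the substitution $\xi = t - a + h$ at fixed $h$ has unit Jacobian and maps the integration region bijectively into a subset of $(0, t) \times (0, \amax)$, so the double integral is bounded by $\norm{f}_{L^2((0, t), \hilbert)}^2$. The second piece of the split ($a \geq t$) is treated analogously, applying \Cref{lem:MNestimate} at initial characteristic time $a - t$: the substitution $\eta = a - t$ converts the $y_0$- and $y_1$-contributions into $\norm{y_0}_\vilbert^2$ and $\tau \norm{y_1}_\hilbert^2$, and an analogous change of variables on the skewed region $\set{t \leq a \leq \amax,\ a-t \leq h \leq a}$ handles the $f$-term, again producing at most $\norm{f}_{L^2((0, t), \hilbert)}^2$.

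The estimate for $v$ proceeds identically, except that each contribution on the right-hand side now originates from the $\tau$-weighted $H$-bound on the time derivatives $M'$, $N'$, $P'$ rather than from the $V$-bound on $M$, $N$, $P$; the joint estimate $\norm{M(a, 0) v_0}_V^2 + \tau \norm{M'(a, 0) v_0}_H^2 \leqc \norm{v_0}_V^2$ from \Cref{lem:MNestimate} (and its counterparts for $N$ and $P$) guarantees that the same right-hand side is produced. The $L^\infty$-in-$t$ conclusion finally follows because the bound obtained is nondecreasing in $t$ and therefore finite uniformly over $t \in [0, T]$. I anticipate the main obstacle to be the bookkeeping of the change-of-variables argument on the second piece, where the region of integration is a skewed parallelogram rather than a triangle; a secondary technicality is measurability of $t \mapsto y(t)$ in $\vilbert$, which should be verified by approximating the $L^2$-data with smooth data and invoking the strong continuity of the propagators $M$, $N$, $P$ in their parameters.
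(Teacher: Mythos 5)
Your proposal is correct and follows essentially the same route as the paper's proof: split the age integral at $\min\set{t,\amax}$, apply the pointwise characteristic estimates from \Cref{lem:MNestimate} on each branch, convert the $B_0$, $B_1$, $y_0$, $y_1$ contributions by the shift substitutions, and handle the source term by Fubini plus a unit-Jacobian change of variables bounding it by $\norm{f}_{L^2((0,t),\hilbert)}^2$. The treatment of $v$ via the $\tau$-weighted $H$-bounds on $M'$, $N'$, $P'$ likewise matches the paper, which carries both norms through the same computation simultaneously.
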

\begin{proof}
	Using the representations of $y$ and $v$ and Fubini's theorem, we calculate \begin{align}
		\MoveEqLeft \tau \norm{v(t)}_{\hilbert}^2 + \norm{y(t)}_{\vilbert}^2 = \int_0^\amax \tau \norm{v(t, a)}_H^2 + \norm{y(t, a)}_V^2 \dd{a}\\
		&\leqc \int_0^{\min(t, \amax)} \norm{B_0(t-a)}_V^2 + \tau \norm{B_1(t-a)}_H^2 \dd{a} + \int_{\min(t, \amax)} ^\amax \norm{y_0(a-t)}_V^2 + \tau \norm{y_1(a-t)}_H^2 \dd{a} \\
		&\quad + \int_0^\amax \norm{\restr{f}_{\chak(t-a)}}_{L^2(H)}^2 \dd{a}\\
		&= \norm{B_0}_{L^2(((t - \amax)^+, t), V)}^2 + \tau \norm{B_1}_{L^2(((t - \amax)^+, t), H)}^2 + \norm{y_0}_{L^2((-(t-\amax)^+, \amax - t), V)}^2\\
		&\quad + \tau \norm{y_1}_{L^2((-(t-\amax)^+, \amax - t), H)}^2 + \int_0^\amax \int_{(a-t)^+}^a \norm{f(t-a+r, r)}_{H}^2 \dd{r} \dd{a}\\
		&\leq \norm{B_0}_{L^2((0, t), V)}^2 + \tau \norm{B_1}_{L^2((0, t), H)}^2 + \norm{y_0}_{\vilbert}^2 + \tau \norm{y_1}_{\hilbert}^2\\
		&\quad + \int_0^\amax \int_r^{\min(r+t, \amax)} \norm{f(t-a+r, r)}_{H}^2 \dd{a} \dd{r}
	\end{align}
	With the substitution $s = a-r$ we obtain
	\begin{align}
		\MoveEqLeft \tau \norm{v(t)}_{\hilbert}^2 + \norm{y(t)}_{\vilbert}^2 \leqc \norm{B_0}_{L^2((0, t), V)}^2 + \tau \norm{B_1}_{L^2((0, t), H)}^2 + \norm{y_0}_{\vilbert}^2 + \tau \norm{y_1}_{\hilbert}^2\\
		&\quad + \int_0^\amax \int_0^{\min(t, \amax - r)} \norm{f(t-s, r)}_{H}^2 \dd{s} \dd{r}\\
		& \leq \norm{B_0}_{L^2((0, t), V)}^2 + \tau \norm{B_1}_{L^2((0, t), H)}^2 + \norm{y_0}_{\vilbert}^2 + \tau \norm{y_1}_{\hilbert}^2 + \norm{f}_{L^2((0, t), \hilbert)}^2.
	\end{align}
	This concludes the proof.
\end{proof}

\begin{Thm}\label{Thm:RelSolCont}
	Assume that \Cref{ass:CharRelax} holds. Then the function $v$ from \cref{eq:vEquation} equals the weak derivative $\delta y$, where $y$ is defined by \cref{eq:ReprRelSolution}. Furthermore we have \begin{align}
		y &\in C([0, T], \vilbert) \cap C(\bar{\mathcal{I}}, L^2((0, T), V)),\\
		\delta y &\in C([0, T], \hilbert) \cap C(\bar{\mathcal{I}}, L^2((0, T), H))
	\end{align} and the energy estimate
	\begin{align}
		\tau \norm{\delta y}_{C([0, T], \hilbert)}^2 + \norm{y}_{C([0, T], \vilbert)}^2 &\leqc \norm{B_0}_{L^2((0, T), V)}^2 + \tau \norm{B_1}_{L^2((0, T), H)}^2\\
		& + \norm{y_0}_\vilbert^2 + \tau \norm{y_1}_\hilbert^2 + \norm{f}_{L^2((0, T), \hilbert)}^2. \label{eq:RelLinEstimate}
	\end{align}
	Also we have \begin{equation}
		y(t = 0) = y_0, \quad \delta y(t = 0) = y_1, \quad y(a = 0) = B_0, \quad \delta y(a = 0) = B_1.
	\end{equation}
	
\end{Thm}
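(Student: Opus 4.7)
The overall plan is to verify each claimed property of the pair $(y, v)$ defined by \cref{eq:ReprRelSolution} and \cref{eq:vEquation} in turn, relying on the per-characteristic estimates from \Cref{Lem:CharEstimate}/\Cref{lem:MNestimate} and the already-established spatial estimate from \Cref{thm:RelSolution}. The boundary and initial conditions are the easiest piece: by construction, for $t < a$ we have $y(0, a) = M(a, a) y_0(a) + N(a, a) y_1(a) + P(a,a) 0 = y_0(a)$ and $v(0,a) = y_1(a)$ by the definition of $M, N, P$ as solution operators with the prescribed data at $h = (-t_0)^+$, and analogously for $a = 0$ we read off $y(t,0) = B_0(t)$ and $v(t,0) = B_1(t)$, so all four pointwise conditions hold.

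The key technical step is to identify $v$ with the weak directional derivative $\delta y = (\partial_t + \partial_a) y$. I would change variables to characteristic coordinates $(t_0, h)$ via $t = t_0 + h$, $a = h$, so that integration over $(0,T) \times \mathcal{I}$ decomposes as an iterated integral over $t_0 \in (-\amax, T)$ and $h$ in an interval depending on $t_0$, and along each characteristic $\chak(t_0)$ the restriction $\restr{y}_{\chak(t_0)}(h)$ is precisely one of $M(h, (-t_0)^+) v_0 + N(h, (-t_0)^+) v_1 + P(h, (-t_0)^+) \restr{f}_{\chak(t_0)}$ from \Cref{thm:exRelChar}, whose classical derivative in $h$ equals $\restr{v}_{\chak(t_0)}(h)$. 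Testing with $\phi \in C_c^\infty((0,T) \times \mathcal{I} \times \Omega)^n$, the quantity $\delta \phi$ becomes $\dv{h} \phi(t_0 + h, h)$, and a fibrewise integration by parts in $h$, justified by $\restr{v}_{\chak(t_0)} \in C([s, \amax], H)$ from \Cref{thm:exRelChar}, produces $v$ on the right after reassembling the $t_0$-integral via Fubini (the boundary contributions in $h$ vanish because $\phi$ has compact support in $(0,T) \times \mathcal{I}$). Hence $\delta y = v$ in $\mathcal{D}'$; the required integrability $v \in L^\infty((0,T), \hilbert)$ is already given by \Cref{thm:RelSolution}.

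For the continuity statements, I would argue by density. Approximating $y_0, y_1, B_0, B_1, f$ by smooth data for which the characteristic-wise solutions $M, N, P$ acting on them yield classically continuous functions of $(t, a)$ into $V$ and $H$ respectively, the corresponding approximate $y_\varepsilon$, $v_\varepsilon$ lie in $C([0,T], \vilbert)$ and $C([0,T], \hilbert)$. The representation \cref{eq:ReprRelSolution} is linear in the data, and the uniform estimate in \Cref{thm:RelSolution} (together with \Cref{lem:MNestimate}) transfers to the differences $y - y_\varepsilon$, $v - v_\varepsilon$, so $(y, v)$ inherits the desired Banach-space-valued continuity in $t$. The continuity in $a$ with values in $L^2((0, T), V)$ respectively $L^2((0,T), H)$ follows by the same argument after exchanging the roles of $t$ and $a$: fixing $a$ and varying along horizontal slices, the characteristic representation provides the same kind of translation continuity, and Fubini lets us interpret the estimate of \Cref{thm:RelSolution} applied on $[0,T]$ uniformly in $a$.

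Finally, the energy estimate in $C([0, T], \cdot)$-norm is obtained by taking the supremum over $t \in [0, T]$ of the pointwise bound from \Cref{thm:RelSolution}; the right-hand side is monotone in $t$, so we simply enlarge the $L^2((0,t), \cdot)$-norms to $L^2((0,T), \cdot)$-norms, and since we have just proved continuity in $t$, the pointwise $\sup$ agrees with the $C([0,T], \cdot)$-norm. I expect the main obstacle to be making the identification $v = \delta y$ fully rigorous across the transition $t = a$ where the two cases in \cref{eq:ReprRelSolution} meet, so that the fibrewise integration by parts in $h$ does not pick up spurious interface contributions; handling this cleanly requires the change of variables to $(t_0, h)$ where each characteristic is a single one-dimensional problem with no such interface.
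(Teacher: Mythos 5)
Your proposal is correct and follows essentially the same route as the paper, which simply defers the details to \cite[Thms.~2.6 and 2.8]{AzmiSchlosser} with $U$ replaced by $M$, $N$ and $S$ by $P$: the identification $v=\delta y$ via characteristic coordinates and fibrewise integration by parts, continuity via density and the uniform bounds of \Cref{lem:MNestimate}, and the energy estimate by taking the supremum in \Cref{thm:RelSolution} are exactly the arguments being invoked there. The only cosmetic remark is that the continuity statements should logically precede the reading-off of the traces $y(t=0)$, $y(a=0)$, etc., since these are only defined once continuity in $t$ and $a$ is established.
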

\begin{proof}
	By replacing $U$ with $M$ or $N$, and $S$ with $P$, respectively, proving that $v$ is the weak derivative of $y$ can be done in a fashion similar to \cite[Thm. 2.8]{AzmiSchlosser}. The energy estimate then follows directly from \Cref{thm:RelSolution}. Because of \Cref{lem:MNestimate}, a proof similar to \cite[Thm. 2.6]{AzmiSchlosser} with the same replacements for $U$ and $S$ shows the continuity of $y$ and $v = \delta y$ and confirms the initial values for $t$ and $a$.
\end{proof}
Combining \Cref{Thm:RelSolCont} with the definition of a weak solution on characteristics allows us to conclude the following existence theorem for the linearized system \eqref{eq:RelLinEqn}.
\begin{Thm} \label{thm:RelLinWeakSol}
	Assume that \Cref{ass:CharRelax} holds. Then the function $y$ defined by \cref{eq:ReprRelSolution} is a weak solution of \cref{eq:RelLinEqn} in the sense that for almost all $t \in (0, T)$, $a \in \mathcal{I}$ and all $v \in V$ the weak formulation \begin{gather}
		\skp{\tau \delta^2 y(t, a), v}_{V' \times V} + \skp{(1 + \tau L(a)) \delta y(t, a), v}_H + \skp{(L(a) + \tau L_a(a)) y(t, a), v}_H\\
		= \skp{\sigma(a) \nabla y(t, a), \nabla v}_H + \skp{f(t, a), v}_H
	\end{gather}
	and the initial conditions \begin{alignat}{2}
		y(0, a, x) &= y_0(a, x),\quad & \delta y(0, a, x) &= y_1(a, x),\\
		y(t, 0, x) &= B_0(t, x),\quad & \delta y(t, 0, x) &= B_1(t, x)
	\end{alignat}
	hold, the latter expressions being defined by \Cref{Thm:RelSolCont}.
\end{Thm}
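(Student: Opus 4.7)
The plan is to combine the characteristic-wise weak formulation established in \Cref{thm:exRelChar} with the global regularity statement of \Cref{Thm:RelSolCont} and translate everything from the parameter $h$ along characteristics back to the $(t,a)$ coordinates. Since \Cref{Thm:RelSolCont} already yields that $y \in C([0,T],\vilbert)$, that $\delta y = v$ coincides with the function from \cref{eq:vEquation} and lies in $C([0,T],\hilbert)$, and that the initial and boundary traces $y(0,\cdot)$, $\delta y(0,\cdot)$, $y(\cdot,0)$, $\delta y(\cdot,0)$ equal $y_0$, $y_1$, $B_0$, $B_1$ respectively, the only remaining task is to verify that $y$ satisfies the stated weak formulation of \cref{eq:RelLinEqn} for almost every $(t,a) \in (0,T) \times \mathcal{I}$.

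First, I would fix $t_0 \in (-\amax, T)$ and set $v = \restr{y}_{\chak(t_0)}$. By construction and by \Cref{thm:exRelChar}, this $v$ is the unique weak solution of \cref{eq:RelCharEqn}, so for every $w \in V$ and almost every $h \in ((-t_0)^+, \amax)$ the identity
\begin{equation}
 \dv{h} \skp{\tau v_h(h), w}_H + \skp{(1+\tau L(h)) v_h(h), w}_H + \skp{(L(h) + \tau L_h(h)) v(h), w}_H + \skp{\sigma(h) \nabla v(h), \nabla w}_{H^d} = \skp{f(t_0 + h, h), w}_H
\end{equation}
holds. Using $\delta y = v$ from \Cref{Thm:RelSolCont}, the identification $\restr{(\delta y)}_{\chak(t_0)} = v_h$, and the fact that for $C^1$-in-$h$ duality pairings the derivative $\dv{h} \skp{\tau v_h(h),w}_H$ agrees pointwise with the directional weak derivative of $\tau \delta y$ tested against $w$ (i.e. with $\skp{\tau \delta^2 y(t_0+h,h), w}_{V' \times V}$), this identity is precisely the restriction of the desired weak formulation along $\chak(t_0)$.

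Next, since the characteristic lines $\chak(t_0)$ for $t_0 \in (-\amax,T)$ partition $[0,T] \times \bar{\mathcal{I}}$, I would apply a Fubini-type argument: for each fixed $w \in V$, the union of the null sets (of $h$-values on each $\chak(t_0)$) where the characteristic identity fails corresponds, under the bijective change of variables $(t_0,h) \mapsto (t_0+h,h) = (t,a)$ with unit Jacobian, to a null set in $[0,T] \times \mathcal{I}$. Hence the weak equation
\begin{equation}
 \skp{\tau \delta^2 y(t,a), w}_{V' \times V} + \skp{(1+\tau L(a)) \delta y(t,a), w}_H + \skp{(L(a) + \tau L_a(a)) y(t,a), w}_H = \skp{\sigma(a) \nabla y(t,a), \nabla w}_{H^d} + \skp{f(t,a), w}_H
\end{equation}
holds for a.e. $(t,a)$. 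A density/separability argument on $V$ (choose a countable dense set and intersect the corresponding null sets) yields a single null set outside of which the identity holds for every $w \in V$ simultaneously.

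The main obstacle I anticipate is the careful justification of the translation of the $h$-derivative on a characteristic into the directional weak derivative $\delta^2 y$ tested against $w \in V$, together with the measurability of the assignment $t_0 \mapsto \restr{y}_{\chak(t_0)}$ needed to invoke the Fubini-type argument. This is mostly bookkeeping: the representation \cref{eq:ReprRelSolution} together with \Cref{lem:MNestimate} gives the required joint regularity, and the change of variables along characteristics is a routine computation; the argument is entirely analogous to, and in fact a second-order refinement of, the corresponding step in \cite[Thm.~2.8]{AzmiSchlosser}. The initial and boundary trace identities are inherited directly from \Cref{Thm:RelSolCont}, concluding the proof.
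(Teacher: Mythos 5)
Your proposal takes exactly the paper's route: the paper states this theorem without a separate proof, justifying it only by the remark that it follows from combining \Cref{Thm:RelSolCont} (which supplies the regularity, the identification $v = \delta y$, and the four trace identities) with the weak formulation on characteristics from \Cref{thm:exRelChar}. The details you add — the unit-Jacobian change of variables $(t_0,h)\mapsto(t_0+h,h)$, the Fubini-type null-set argument, and the separability of $V$ to obtain a single exceptional set — are correct and simply make explicit what the paper leaves implicit.
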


\subsection{The linearized equation with implicit birth laws}
				
The next step is to incorporate implicit birth conditions into model \eqref{eq:RelLinEqn}. We leave \cref{eq:RelBirth1} unchanged, and instead of the nonlinear condition \eqref{eq:RelBirth2}, we assume that the value of $\delta y$ in $a = 0$ depends implicitly on both $y$ and $\delta y$. This yields the system \begin{subequations}\label{eq:linImplSystem}
	\begin{gather+}
		(1 + \tau \delta) (\delta y(t, a, x) + L(a, x) y(t, a, x)) = \sigma(a) \laplace y(t, a, x) + f(t, a, x), \label{eq:linImplEquation}\\ 
		y(t = 0) = y_0, \quad \delta y(t = 0) = y_1,  \quad \partial_\nu y(x \in \partial \Omega) = 0, \label{eq:linImplInit}\\
		y(a = 0) = \int_0^\amax \beta_0(\alpha) y(\alpha) \dd{\alpha} + g_0 \eqqcolon B_0(t), \label{eq:BirthLawAllg0}\\
		(\delta y)(a = 0) = \int_0^\amax \beta_L(\alpha) y(\alpha) + \beta_1(\alpha) \delta y(\alpha) \dd{\alpha} \textcolor{black}{+ \beta_\nabla(\alpha) \cdot \nabla y(\alpha)} + g_1 \eqqcolon B_1(t) \label{eq:BirthLawAllg1}
	\end{gather+}
\end{subequations}
with the given \enquote{birth rates} $\beta_0$, $\beta_1$, $\beta_L$.
	
In order to make the following easier to read, we introduce a more compact notation of our results so far. We introduce the space $\mathbb{V}^\tau \coloneqq V \times H$ with the norm \begin{equation}
	\norm{(u_0, u_1)\tran}_{\mathbb{V}^\tau} = \qty(\norm{u_0}_V^2 + \tau \norm{u_1}_H^2)^{1/2}
\end{equation} (this is obviously a Banach space) and let \begin{gather}
	Y \coloneqq \begin{pmatrix}
		y\\ \delta y
	\end{pmatrix}, \quad \mathcal{U} \coloneqq \begin{pmatrix}
		M & N\\ M' & N'
	\end{pmatrix}, \quad \mathcal{S} \coloneqq \begin{pmatrix}
		P\\ P'
	\end{pmatrix},\\
	Y_0 \coloneqq \begin{pmatrix}
		y_0\\ y_1
	\end{pmatrix}, \quad \mathcal{B} \coloneqq \begin{pmatrix}
		B_0\\ B_1
	\end{pmatrix}, \quad \mathfrak{b} \coloneqq \begin{pmatrix}
		\beta_0 & 0\\ \beta_L \textcolor{black}{+ \beta_\nabla \cdot \nabla} & \beta_1
	\end{pmatrix},
\end{gather}
then from \cref{eq:ReprRelSolution} we have \begin{equation} \label{eq:Yeq}
	Y(t, a) = \begin{cases}
		\mathcal{U}(a, a-t) Y_0(a-t), & t \leq a\\
		\mathcal{U}(a, 0) \mathcal{B}(t-a), & t \geq a
	\end{cases} \,+ \mathcal{S}(a, (a-t)^+) \restr{f}_{\chak(t-a)},
\end{equation}
where \begin{equation} \label{eq:Beq}
	\mathcal{B}(t) = \int_0^\amax \mathfrak{b}(\alpha) Y(t, \alpha) \dd{\alpha} + \pmat{g_0\\g_1}.
\end{equation}
In this context, we can reformulate \cref{eq:RelLinEstimate} to \begin{equation}\label{eq:RelLinSystem}
	\norm{Y}_{C([0, T], L^2(\mathcal{I}, \mathbb{V}^\tau))}^2 \leqc \norm{\mathcal{B}}_{L^2((0, T), \mathbb{V}^\tau)}^2 + \norm{Y_0}_{L^2(\mathcal{I}, \mathbb{V}^\tau)}^2 + \norm{f}_{L^2((0, T), \hilbert)}^2.
\end{equation}
Plugging \cref{eq:Yeq} into \cref{eq:Beq} yields \begin{equation} \label{eq:VolterraRel}
	\begin{split}
		\mathcal{B}(t) &= \int_0^{\min(t, \amax)} \mathfrak{b}(\alpha) \mathcal{U}(\alpha, 0) \mathcal{B}(t-\alpha) \dd{\alpha} + \int_{\min(t, \amax)}^\amax \mathfrak{b}(\alpha) \mathcal{U}(\alpha, \alpha-t) Y_0(\alpha-t) \dd{\alpha}\\
		&\quad+ \int_0^\amax \mathfrak{b}(\alpha) \mathcal{S}(\alpha, (\alpha-t)^+) \restr{f}_{\chak(t-\alpha)} \dd{\alpha} + \pmat{g_0\\g_1}.
	\end{split}
\end{equation}

In contrast to the unrelaxed problem, this time we have to solve a Volterra equation of a function that in the first component takes values in $V$. This means that multiplication of an element $v \in V$ with $\beta_0(a)$ ($a \in \mathcal{I}$) must again yield an element of $V$. The following lemma tells us for which $\beta_0$ this holds.
\begin{Lem}\label{Lem:SobolevMultiplier}
	Let $\phi \in H^1(\Omega)$ where $\Omega \subset \R^d$ is an open and bounded domain and $d \geq 3$. Then for any $\rho \in W^{1, d}_b(\Omega) := W^{1, d}(\Omega) \cap L^\infty(\Omega)$ we have $\rho \phi \in H^1(\Omega)$ and $\norm{\rho \phi}_{H^1} \leq \norm{\phi}_{H^1} \norm{\rho}_{W^{1, d}_b(\Omega)}$. The space $W^{1, d}_b(\Omega)$ is complete with respect to the norm $\norm{\rho}_{W^{1, d}_b(\Omega)} = \norm{\rho}_{L^\infty(\Omega)} + \norm{\nabla \rho}_{L^d(\Omega)^d}$. If $d = 2$, the same results hold if we replace the space $W^{1, d}_b$ by $W^{1, d+\epsilon}_b \coloneqq W^{1, d+\epsilon}(\Omega) \cap L^\infty(\Omega)$ for some $\epsilon > 0$.
\end{Lem}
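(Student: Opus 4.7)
The plan is to verify the product $\rho\phi$ lies in $H^1(\Omega)$ by checking the two obvious candidates for its $L^2$-norm and weak gradient, then establishing completeness of $W^{1,d}_b$, and finally observing what changes for $d=2$.

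First I would note that $\rho\phi \in L^2(\Omega)$ is immediate from $\norm{\rho\phi}_{L^2} \leq \norm{\rho}_{L^\infty}\norm{\phi}_{L^2}$. For the weak derivative I would show, by the standard product rule for Sobolev functions, that $\nabla(\rho\phi) = \phi\nabla\rho + \rho\nabla\phi$ holds distributionally (this can be justified by approximating $\rho$ and $\phi$ by smooth functions in their respective norms and passing to the limit, once one knows both summands are in $L^1_{\mathrm{loc}}$). The summand $\rho\nabla\phi$ is in $L^2$ with norm bounded by $\norm{\rho}_{L^\infty}\norm{\nabla\phi}_{L^2}$. The main step is to show $\phi\nabla\rho \in L^2$: by the Sobolev embedding $H^1(\Omega) \hookrightarrow L^{2d/(d-2)}(\Omega)$ (valid since $d \geq 3$ and $\Omega$ is sufficiently regular) together with Hölder's inequality applied to the exponents $\tfrac{1}{d} + \tfrac{d-2}{2d} = \tfrac{1}{2}$, we get
\begin{equation}
  \norm{\phi\nabla\rho}_{L^2} \leq \norm{\nabla\rho}_{L^d}\,\norm{\phi}_{L^{2d/(d-2)}} \leqc \norm{\nabla\rho}_{L^d}\,\norm{\phi}_{H^1}.
\end{equation}
Combining the three estimates yields $\norm{\rho\phi}_{H^1} \leqc \norm{\phi}_{H^1}\norm{\rho}_{W^{1,d}_b}$, after absorbing the constants into the choice of norm.

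Next, for completeness of $W^{1,d}_b$: given a Cauchy sequence $(\rho_n)$ in this norm, it is Cauchy in $L^\infty(\Omega)$ and $(\nabla\rho_n)$ is Cauchy in $L^d(\Omega)^d$. Both spaces are complete, so we obtain limits $\rho \in L^\infty$ and $g \in L^d$. Since $L^\infty$-convergence implies $L^d$-convergence on the bounded domain $\Omega$, standard distributional arguments identify $g$ with $\nabla\rho$, hence $\rho \in W^{1,d}_b$ with $\rho_n \to \rho$ in the full norm.

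Finally, for the case $d=2$ the Sobolev embedding $H^1(\Omega) \hookrightarrow L^{2d/(d-2)}$ fails, but $H^1(\Omega) \hookrightarrow L^q(\Omega)$ holds for every finite $q$. Replacing the exponent $d$ in Hölder's inequality by $d+\epsilon = 2+\epsilon$ and choosing the complementary exponent $q = 2(2+\epsilon)/\epsilon < \infty$ gives the same estimate with $\norm{\nabla\rho}_{L^{2+\epsilon}}$ on the right, and the completeness argument is identical. The main obstacle is really the identification of the weak derivative of $\rho\phi$ and the Hölder-Sobolev interplay; once the exponents are matched, everything else is a routine application of standard Sobolev space theory.
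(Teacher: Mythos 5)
Your proposal is correct and follows essentially the same route as the paper: decompose $\nabla(\rho\phi)=\phi\nabla\rho+\rho\nabla\phi$, control $\rho\nabla\phi$ by $\norm{\rho}_{L^\infty}$, and control $\phi\nabla\rho$ via the Sobolev embedding $H^1\hookrightarrow L^{2d/(d-2)}$ combined with H\"older at exactly the same exponents (your choice $q=2(2+\epsilon)/\epsilon$ for $d=2$ coincides with the paper's $q=2+4/\epsilon$). If anything, your justification of the product rule by approximation and your explicit completeness argument are slightly more careful than the paper's.
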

\begin{proof}
	First let $d > 2$. If $\rho \phi$ is an element of $H^1(\Omega)$ then the product rule $\nabla (\rho \phi) = \phi \nabla \rho + \rho \nabla \phi$ holds. This means that in order to show the claim we need to show that both summands are contained in $L^2(\Omega)$ again. The second one does so provided $\rho \in L^\infty$. The first one, $\phi \nabla \rho$, needs a little more reasoning. According to case Sobolev's Embedding Theorem (cf. \cite[Thm. 4.12]{AdamsFournier}),  we have $H^1(\Omega) \hookrightarrow L^q(\Omega)$ for any $2 \leq q \leq \frac{2d}{d-2}$. For these $q$ , Hölder's inequality yields ($i = 1, \ldots, d$)\begin{equation} \label{eq:SobolevMultiplier}
		\norm{\phi \cdot \partial_i \rho}^2_{L^2} = \norm{\phi^2 (\partial_i \rho)^2}_{L^1} \leq \norm{\phi^2}_{L^{q/2}} \norm{(\partial_i\rho)^2}_{L^{(q/2)'}},
	\end{equation}
	where $(q/2)'$ denotes the conjugate exponent to $q/2$, i.e. it holds $\frac{1}{q/2} + \frac{1}{(q/2)'} = 1$. In the special case $q = \frac{2d}{d-2} \geq 2$ we have $\qty(\frac q2)' = \frac d2$ and therefore $\norm{\phi \cdot \partial_i \rho}_{L^2}^2 \leq \norm{\partial_i \rho}_{L^d}^2 \norm{\phi}_{L^q}^2$. Thus, we have shown that $\nabla (\rho \phi) \in L^2(\Omega)$, and together with $\rho \in L^\infty(\Omega)$ it follows that $\rho \phi \in H^1(\Omega)$. Finally, the completeness of the space follows from $L^\infty(\Omega) \hookrightarrow L^d(\Omega)$. 
	
	If $d = 2$, Sobolev's Theorem shows that $H^1(\Omega)$ embeds into $L^q(\Omega)$ for all $q \in [2, \infty)$. Hence, the above proof works if for $q$ in \cref{eq:SobolevMultiplier} we choose the value $q = 2 + \frac 4\epsilon$. In this case we have $\qty(\frac q2)' = \frac{d + \epsilon}{2}$, and we can conclude as in the case where $d > 2$.
\end{proof}
\begin{Bem}
	From Sobolev's Theorem it follows that for any $q > d$ there is an embedding $W^{1, q}(\Omega) \hookrightarrow W^{1, d}_b(\Omega)$.
\end{Bem}

\begin{Ann}\label{ass:ImplRelax}
	For the following section we assume that \begin{enumerate}
		\item $\beta_0 \in L^\infty(\mathcal{I}, W^{1, d}_b(\Omega))$ resp. in $W^{1, d+\epsilon}_b$ for some $\epsilon > 0$ if $d = 2$, and $\beta_L$, \textcolor{black}{$\beta_\nabla$}, $\beta_1 \in L^\infty(\mathcal{I} \times \Omega)$.
		\item $g_0 \in L^2((0, T), V)$, $g_1 \in L^2((0, T), H)$.
		\item The rest of \Cref{ass:CharRelax} holds: \begin{enumerate}[{3.}1.]
			\item $\tau \in (0, \tau_{\text{max}})$ for some $0 <\tau_{\text{max}} < \infty$.
			\item $y_0 \in \vilbert$, $y_1 \in \hilbert$ and $f \in L^2((0, T), \hilbert)$
			\item $L \in C^1(\bar{\mathcal{I}}, L^\infty(\Omega))^{n \times n}$ and its entries are uniformly bounded away from zero with respect to both age and space.
			\item $\sigma \in C^1(\bar{\mathcal{I}}, \R)^{n \times n}$ is a diagonal matrix whose entries are uniformly bounded away from zero with respect to age.
		\end{enumerate}
	\end{enumerate}
\end{Ann}

By standard results on Volterra equations, especially \cite[Cor. 0.2]{Pruess}, we can solve the Volterra equation for $\mathcal{B}$:
\begin{Thm} \label{Thm:Volterra}
	The Volterra equation \eqref{eq:VolterraRel} has a unique solution $\mathcal{B} = \mathcal{B}(Y_0, f, g_0, g_1) \in L^2((0, T), \mathbb{V}^\tau)$ which satisfies the estimate \begin{equation} \label{eq:BBestimate}
		\norm{\mathcal{B}}_{L^2((0, T), \mathbb{V}^\tau)}^2 \leqc \norm{Y_0}_{L^2(\mathcal{I}, \mathbb{V}^\tau)}^2 + \norm{f}_{L^2((0, T), \hilbert)}^2 + \norm{g_0}_{L^2((0, T), V)}^2 + \tau \norm{g_1}_{L^2((0, T), H)}^2,
	\end{equation}
	the constant not depending on $\tau$.
\end{Thm}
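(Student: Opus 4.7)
The plan is to rewrite \eqref{eq:VolterraRel} as an abstract linear Volterra equation of convolution type on the Banach space $\mathbb{V}^\tau$,
\begin{equation}
\mathcal{B}(t) = \int_0^{\min(t, \amax)} \mathcal{K}(\alpha) \mathcal{B}(t-\alpha) \dd{\alpha} + F(t),
\end{equation}
with operator-valued kernel $\mathcal{K}(\alpha) \coloneqq \mathfrak{b}(\alpha) \mathcal{U}(\alpha, 0)$ and inhomogeneity $F$ collecting the three remaining terms of \eqref{eq:VolterraRel}. Existence, uniqueness, and a bound of the form $\norm{\mathcal{B}}_{L^2((0,T), \mathbb{V}^\tau)} \leqc \norm{F}_{L^2((0,T), \mathbb{V}^\tau)}$ will then follow from \cite[Cor.~0.2]{Pruess}, once one verifies that $\mathcal{K} \in L^\infty((0,\amax), L(\mathbb{V}^\tau))$ with a bound independent of $\tau$, and that $F \in L^2((0,T), \mathbb{V}^\tau)$ is controlled by the right-hand side of \eqref{eq:BBestimate}.

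The first task is uniform boundedness of $\mathcal{K}$. Applied to $(u_0, u_1)\tran \in \mathbb{V}^\tau$, the operator $\mathcal{U}(\alpha, 0)$ produces $(M(\alpha, 0) u_0 + N(\alpha, 0) u_1, M'(\alpha, 0) u_0 + N'(\alpha, 0) u_1)\tran$, and the sharp bounds from \Cref{lem:MNestimate} directly give $\norm{\mathcal{U}(\alpha, 0)(u_0, u_1)\tran}_{\mathbb{V}^\tau} \leqc \norm{(u_0, u_1)\tran}_{\mathbb{V}^\tau}$ with a $\tau$-independent constant (this is the entire reason for introducing the $\tau$-weighted norm). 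Left multiplication by $\mathfrak{b}(\alpha)$ then preserves $\mathbb{V}^\tau$: the upper-left entry $\beta_0(\alpha, \cdot)$ acts as a multiplier on $V = H^1(\Omega)^n$ via \Cref{Lem:SobolevMultiplier} using the Sobolev-multiplier assumption in \Cref{ass:ImplRelax}, while $\beta_L$, $\beta_\nabla \cdot \nabla$ and $\beta_1$ map $V$ respectively $H$ boundedly into $H$ since $\beta_L, \beta_\nabla, \beta_1 \in L^\infty(\mathcal{I} \times \Omega)$. Altogether $\sup_{\alpha \in (0, \amax)} \norm{\mathcal{K}(\alpha)}_{L(\mathbb{V}^\tau)}$ is finite and independent of $\tau$.

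For the inhomogeneity $F$, the algebraic summand $(g_0, g_1)\tran$ trivially contributes $\norm{g_0}_{L^2((0,T), V)}^2 + \tau \norm{g_1}_{L^2((0,T), H)}^2$. The remaining two pieces, involving $Y_0$ and $f$ respectively, can be handled by essentially the same Fubini calculation carried out in the proof of \Cref{thm:RelSolution}, combined with \Cref{lem:MNestimate} applied to $\mathcal{U}(\alpha, \alpha - t)$ and $\mathcal{S}(\alpha, (\alpha - t)^+)$ and with the multiplier bound on $\mathfrak{b}$ from the previous step. This yields
\begin{equation}
\norm{F}_{L^2((0,T), \mathbb{V}^\tau)}^2 \leqc \norm{Y_0}_{L^2(\mathcal{I}, \mathbb{V}^\tau)}^2 + \norm{f}_{L^2((0,T), \hilbert)}^2 + \norm{g_0}_{L^2((0,T), V)}^2 + \tau \norm{g_1}_{L^2((0,T), H)}^2,
\end{equation}
again with a $\tau$-independent constant. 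Invoking \cite[Cor.~0.2]{Pruess} now delivers the unique solution $\mathcal{B} \in L^2((0,T), \mathbb{V}^\tau)$ together with \eqref{eq:BBestimate}.

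The main obstacle is simply the consistent bookkeeping of $\tau$: the whole argument is designed so that $\tau$ enters only through the weighted norm on $\mathbb{V}^\tau$, and both the kernel bound from \Cref{lem:MNestimate} and the Fubini estimate for $F$ have to be carried out in this norm so that the constant furnished by \cite[Cor.~0.2]{Pruess} does not inherit any $\tau$-dependence. A subsidiary technical point is that the first component of $\mathcal{K}(\alpha) \mathcal{B}(t-\alpha)$ must genuinely lie in $V$ and not merely in $H$ — otherwise iterating the Volterra equation would immediately destroy the $V$-regularity of the $B_0$-component — which is precisely why the stronger hypothesis $\beta_0(\alpha, \cdot) \in W^{1, d}_b(\Omega)$ (rather than mere boundedness) is imposed in \Cref{ass:ImplRelax} and why \Cref{Lem:SobolevMultiplier} has been prepared beforehand.
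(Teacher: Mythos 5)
Your proposal is correct and follows essentially the same route as the paper, which disposes of this theorem by citing \cite[Cor.~0.2]{Pruess}; you have simply spelled out the verification of its hypotheses (the $\tau$-uniform bound on the kernel $\mathfrak{b}(\alpha)\mathcal{U}(\alpha,0)$ in $L(\mathbb{V}^\tau)$ via \Cref{lem:MNestimate} and \Cref{Lem:SobolevMultiplier}, and the Fubini-type control of the inhomogeneity) that the paper leaves implicit. The bookkeeping of $\tau$ through the weighted norm on $\mathbb{V}^\tau$ is exactly the intended mechanism, so no further comment is needed.
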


\begin{Thm} \label{thm:RelImplExistence}
	There exists a unique weak solution $y \in C([0, T], \vilbert) \cap C(\bar{\mathcal{I}}, L^2((0, T), V))$ with $\delta y \in C([0, T], \hilbert) \cap C(\bar{\mathcal{I}}, L^2((0, T), H))$, in the sense given in \Cref{thm:RelLinWeakSol}, to the system \eqref{eq:linImplSystem}. This solution satisfies \begin{equation} \label{eq:NonlEst2*}
		\begin{aligned}
			\tau \norm{\delta y}_{C([0, T], \hilbert)}^2 + \norm{y}_{C([0, T], \vilbert)}^2 &\leqc \norm{y_0}_\vilbert^2 + \tau \norm{y_1}_\hilbert^2 + \norm{f}_{L^2((0, T), \hilbert)}^2\\
			&\quad + \norm{g_0}_{L^2((0, T), V)}^2 + \tau \norm{g_1}_{L^2((0, T), H)}^2.
		\end{aligned}
	\end{equation}
\end{Thm}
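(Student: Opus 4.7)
The plan is to reduce the implicit system \eqref{eq:linImplSystem} to the explicit linear problem already treated in \Cref{thm:RelLinWeakSol} by first solving the Volterra equation \eqref{eq:VolterraRel} for the unknown birth traces $\mathcal{B} = (B_0, B_1)\tran$.

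First, I would invoke \Cref{Thm:Volterra}, which under \Cref{ass:ImplRelax} yields a unique $\mathcal{B} \in L^2((0,T), \mathbb{V}^\tau)$ satisfying \eqref{eq:VolterraRel} together with the quantitative bound \eqref{eq:BBestimate}. Its components $B_0 \in L^2((0,T), V)$ and $B_1 \in L^2((0,T), H)$ then supply exactly the data required by \Cref{ass:CharRelax}, so feeding them into \Cref{thm:RelLinWeakSol} for the explicit birth problem \eqref{eq:RelLinEqn} produces a weak solution $y$ with the claimed regularity $y \in C([0,T], \vilbert) \cap C(\bar{\mathcal{I}}, L^2((0,T), V))$ and $\delta y \in C([0,T], \hilbert) \cap C(\bar{\mathcal{I}}, L^2((0,T), H))$, as delivered by \Cref{Thm:RelSolCont}.

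Next, I would verify that this $y$ actually satisfies the implicit birth laws \eqref{eq:BirthLawAllg0}--\eqref{eq:BirthLawAllg1}. By construction, $Y = (y, \delta y)\tran$ is given by the representation \eqref{eq:Yeq}, which is exactly the identity used to derive \eqref{eq:VolterraRel} from \eqref{eq:Beq}; hence evaluating the right-hand sides of \eqref{eq:BirthLawAllg0}--\eqref{eq:BirthLawAllg1} on $Y$ reproduces $\mathcal{B}$, which by \Cref{Thm:RelSolCont} coincides with the trace of $Y$ at $a=0$. The only subtlety is ensuring that the $V$-valued integral $\int_0^{\amax} \beta_0(\alpha)\,y(t,\alpha)\dd{\alpha}$ is well-defined and produces an element of $L^2((0,T), V)$; this is precisely the role of \Cref{Lem:SobolevMultiplier}, which together with the assumption $\beta_0 \in L^\infty(\mathcal{I}, W^{1,d}_b(\Omega))$ (respectively $W^{1,d+\epsilon}_b$ for $d=2$) of \Cref{ass:ImplRelax} makes $\beta_0(\alpha)\,y(t,\alpha) \in V$ with the appropriate bound. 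The remaining multiplications by $\beta_L$, $\beta_\nabla$ and $\beta_1$ land in $H$ and are covered by the plain $L^\infty$ assumption.

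Finally, the energy estimate \eqref{eq:NonlEst2*} follows by substituting the Volterra bound \eqref{eq:BBestimate} into the linear energy inequality \eqref{eq:RelLinEstimate}; the $\tau$-uniformity is preserved throughout since both input estimates are $\tau$-uniform. Uniqueness runs the same construction in reverse: any second weak solution $\tilde y$ has traces at $a=0$ that solve \eqref{eq:VolterraRel}, so \Cref{Thm:Volterra} forces these traces to coincide with $\mathcal{B}$, and the uniqueness part of \Cref{thm:RelLinWeakSol} then yields $\tilde y = y$. The real analytic work has already been absorbed into \Cref{Thm:Volterra} and \Cref{Lem:SobolevMultiplier}, so the main obstacle in executing the plan is purely bookkeeping, namely matching the trace conventions used in those two results with those of the implicit birth laws.
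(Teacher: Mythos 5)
Your proposal is correct and follows essentially the same route as the paper: solve the Volterra equation \eqref{eq:VolterraRel} via \Cref{Thm:Volterra}, feed the resulting $\mathcal{B}$ into the explicit-birth construction of \Cref{Thm:RelSolCont}/\Cref{thm:RelLinWeakSol}, and chain the estimate \eqref{eq:BBestimate} into \eqref{eq:RelLinEstimate} (equivalently \eqref{eq:RelLinSystem}) to obtain \eqref{eq:NonlEst2*}. Your additional remarks on verifying the implicit birth laws and on uniqueness via the uniqueness of the Volterra solution are details the paper leaves implicit, but they do not change the argument.
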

\begin{proof}
	We obtain $y$ by using the solution $\mathcal{B}$ from \Cref{Thm:Volterra} in the construction from \Cref{Thm:RelSolCont}. Plugging \cref{eq:BBestimate} into \cref{eq:RelLinSystem} yields
	\begin{equation}
		\norm{Y}_{C([0, T], L^2(\mathcal{I}, \mathbb{V}^\tau))}^2 \leqc \norm{Y_0}_{L^2(\mathcal{I}, \mathbb{V}^\tau)}^2 + \norm{f}_{L^2((0, T), \hilbert)}^2 + \norm{(g_0, g_1)}_{L^2((0, T), \mathbb{V}^\tau)}^2,
	\end{equation}
	which is just a reformulation of the claimed estimate.
\end{proof}

\begin{Bem}\label{rem:SemigroupRel}
	If $f$, $g_0$, $g_1$ are all zero and we define $\mathcal{B} = \mathcal{B}(Y_0, 0, 0, 0)$ as in \Cref{Thm:Volterra}, then one can show that $(\mathcal{T}(t))_{t \geq 0}$ defined by \begin{equation}
		(\mathcal{T}(t) Y_0)(a) \coloneqq \begin{cases}
			\mathcal{U}(a, a-t) Y_0(a-t), & t \leq a\\
			\mathcal{U}(a, 0) \mathcal{B}(t-a), & t \geq a
		\end{cases}
	\end{equation}
	is a strongly continuous semigroup of operators that formally solves \cref{eq:linImplSystem}. Formally applying Duhamel's principle shows that for any $f_1$, $f_2$, the function \begin{equation}
		V(t) = (v_1, v_2)(t) \tran \coloneqq \int_0^t \mathcal{T}(t-s) (f_1, f_2)\tran (s) \dd{s}
	\end{equation}
	yields a solution to \begin{gather}
		(1 + \tau \delta)(\delta v_1 + L v_1 - f_1) = \sigma \laplace v_1 + \tau (L f_1 + f_2),\\
		v_1(t = 0) = 0, \quad \delta v_1(t = 0) = f_1(t = 0),\\
		v_1(a = 0) = \int_0^\amax \beta_0(\alpha) v_1(\alpha) \dd{\alpha},\\
		(\delta v_1 - f_1)(a = 0) = \int_0^\amax \beta_L v_1 + \beta_\nabla \cdot \nabla v_1 + \beta_1(\delta v_1 - f_1) \dd{a}.
	\end{gather}
	The proof uses the representation \begin{align}
		V(t) = \begin{cases}
			\int_{a-t}^a \mathcal{U}(a, r) (f_1, f_2)\tran (t-a+r, r) \dd{r}, & t \leq a\\
			\mathcal{U}(a, 0) \tilde{B}(t-a) + \int_0^a \mathcal{U}(a, r) (f_1, f_2)\tran (t-a+r, r) \dd{r}, & t \geq a
		\end{cases}
	\end{align}
	where $\tilde{B}(t) = \int_0^\amax \mathfrak{b}(a) V(t, a) \dd{a}$.	The choice $f_1 = 0$, $f_2 = \tau\inv f$ gives a solution to \cref{eq:linImplSystem} without $g_0$ and $g_1$, but the extra factor of $\tau\inv$ remains present in estimates, which would yield worse convergence rates in \Cref{thm:tauConvergence}. Another choice is $f_1 = f$, $f_2 = -Lf$ and $\beta_L = \beta_1 L - L(0) \beta_0$, which captures the structure of \cref{eq:relaxedEquation} better, but has the disadvantage that it is unclear in what sense this construction actually represents a solution to the problem. In addition, the semigroup method provides less flexibility, for example it does not allow for constant terms in the birth equations. For this reason we do not pursue it any further.
\end{Bem}
												
\subsection{The nonlinear equation}\label{sec:RelNonlinear}
In this section we are ready to reintroduce the nonlinear terms to the relaxed equation to obtain model \eqref{eq:relaxedEquation} again: 
\begin{subequations}\label{eq:NonlRelSystem}
	\begin{gather+}
		(1 + \tau \delta) (\delta y + L y + \Lambda(y)) y = \sigma(a) \laplace y + f, \label{eq:NonlRelEq}\\
		y(t = 0) = y_0, \quad \delta y(t = 0) = y_1, \quad \partial_\nu y(x \in \partial \Omega) = 0, \label{eq:nonlinImplInit}\\
		y(t, a = 0) = \int_0^\amax \beta_0(\alpha, x) y(t, \alpha, x) \dd{\alpha} + g_0(t, x),\label{eq:NonlBirthLaw}\\
		\begin{aligned}
				(\delta y)(a = 0) &= \int_0^\amax \beta_L(\alpha, x) y(t, \alpha, x) + \beta_\nabla(\alpha, x) \cdot \nabla y(t, \alpha, x) + \beta_1(\alpha, x) (\delta y)(t, \alpha, x)\\
				&\quad + \qty(\beta_1(\alpha, x) \Lambda(\alpha, x, y) - \Lambda(0, x, y) \beta(\alpha, x)) y(t, \alpha, x) \dd{\alpha}\\
				&\quad - \Lambda(0, y) g_0(t, x) + g_1(t, x) \label{eq:NonlBirthLaw2}
		\end{aligned}
	\end{gather+}
\end{subequations}
Note that we keep an inhomogeneous term $f$ in \cref{eq:NonlRelEq}, for later use in \Cref{sec:Convergence}. The presence of nonlinear terms both in the equation and the implicit boundary conditions makes this section quite technical. We start with some assumptions.

\begin{Ann}\label{assump:final}
	For the rest of this chapter we assume \begin{enumerate}
		\item For all $h$, $i$, $j = 1, \ldots, n$, we have $k^{hij} \in C^1(\bar{I} \times \bar{\Omega} \times \bar{I} \times \bar{\Omega})$ and it holds $k^{hij}(a, x, \amax, \xi) = 0$ for all $a \in \mathcal{I}$ and $x$, $\xi \in \Omega$ . In other words, individuals of maximal age are not infectious at all.
		\item $g_0 \in L^\infty((0, T), H) \cap L^2((0, T), V)$.
		\item All of \Cref{ass:ImplRelax} still holds: \begin{enumerate}[{3.}1.]
			\item $\tau \in (0, \tau_{\text{max}})$ for some $0 <\tau_{\text{max}} < \infty$.
			\item $y_0 \in \vilbert$, $y_1 \in \hilbert$, $f \in L^2((0, T), \hilbert)$ and $g_1 \in L^2((0, T), H)$.
			\item $L \in C^1(\bar{\mathcal{I}}, L^\infty(\Omega))^{n \times n}$ and its entries are uniformly bounded away from zero with respect to both age and space.
			\item $\sigma \in C^1(\bar{\mathcal{I}}, \R)^{n \times n}$ is a diagonal matrix whose entries are uniformly bounded away from zero with respect to age.
			\item $\beta_0 \in L^\infty(\mathcal{I}, W^{1, d}_b(\Omega))$ resp. in $W^{1, d+\epsilon}_b$ for some $\epsilon > 0$ if $d = 2$ (for the definition of these spaces we refer to \Cref{Lem:SobolevMultiplier}) and $\beta_1 \in L^\infty(\mathcal{I} \times \Omega)$.
		\end{enumerate}
	\end{enumerate}
\end{Ann}
From \cref{eq:LambdaDefinition} we recall the definition \begin{equation}
	\Lambda(a, x, w)^{hi} = \sum_{j = 1}^{n} \int_0^\amax \int_\Omega k^{hij}(a, x, \alpha, \xi) w_j(\alpha, \xi) \dd{\xi} \dd{\alpha} = \skp{k^{hi}(a, x, \cdot, \cdot), w}_{\hilbert},
\end{equation}
where $w \in \hilbert$, $h$ and $i = 1, \ldots, n$. In what follows, to keep the notation short we will drop the indices $h$, $i$, $j$ when no confusion arises, and interpret the term $kw$ as tensor contraction. This allows us to write \cref{eq:LambdaDefinition} as \begin{equation} \label{eq:LambdaDefinition2}
	\Lambda(a, x, w) = \int_0^\amax \int_\Omega k(a, x, \alpha, \xi) w(\alpha, \xi) \dd{\xi} \dd{\alpha}.
\end{equation}
\Cref{assump:final} allows an estimate of the form \begin{equation}\label{eq:LambdaEstimateLocal}
	\norm{\Lambda(v_1) v_2}_\hilbert \leq c(k) \norm{v_1}_\hilbert \norm{v_2}_\hilbert
\end{equation}
for all $v_1$, $v_2 \in \hilbert$ and some constant $c$ depending on $k$. Integrating this estimate on $\Lambda$ with $y_1$, $y_2 \in L^2((0, T), \hilbert)$ over the interval $[0, t]$ where $t \in [0, T]$ yields
\begin{subequations}\label{eq:Lambdaestimate}
	\begin{align+}
		\norm{\Lambda(y_1) y_2}_{L^2((0, t), \hilbert)} &\leqc \norm{y_1}_{L^\infty((0, t), \hilbert)} \norm{y_2}_{L^2((0, t), \hilbert)},\\
		\intertext{or, respectively}
		\norm{\Lambda(y_1) y_2}_{L^2((0, t), \hilbert)} &\leqc \norm{y_1}_{L^2((0, t), \hilbert)} \norm{y_2}_{L^\infty((0, t), \hilbert)}
	\end{align+}
\end{subequations}
The assumptions also permit the following expression for $\Lambda(a = 0)$ we need in birth condition \eqref{eq:NonlBirthLaw2}: \begin{equation} \label{eq:LambdaA0}
	\Lambda(a = 0, x, w) = \int_0^\amax \int_\Omega k(0, x, \alpha, \xi) w(\alpha, \xi) \dd{\xi} \dd{\alpha}
\end{equation}
with a similar estimate.
	
Note that in contrast to the relaxed equation, not only does the equation contain a term $\Lambda(y)y$, but also one of the form $\tau\delta (\Lambda(y) y)$, which requires some clarification. From the definition of $\Lambda$ from \cref{eq:LambdaDefinition2} we formally have \begin{align}
	\delta(\Lambda(a, x, y(t, \cdot, \cdot)) y(t, a, x)) &= \Lambda(a, x, y(t, \cdot, \cdot)) \delta y(t, a, x)\\*
	&\quad + \Lambda_a(a, x, y(t, \cdot, \cdot)) y(t, a, x) + \Lambda(a, x, y_t(t, \cdot, \cdot)) y(t, a, x),
\end{align}
which is inconvenient since \Cref{thm:RelImplExistence} only provides information on $\delta y = y_t + y_a$, not on $y_t$ alone. However, since $\Lambda(y)$ is linear in $y$ we can write $\Lambda(y_t) = \Lambda(\delta y) - \Lambda(y_a)$, and integrating the term $\Lambda(y_a)$ by parts yields \begin{align}
	\Lambda(y_a) &= \int_\Omega \int_0^\amax k(a, x, \alpha, \xi) y_a(t, \alpha, \xi) \dd{\alpha} \dd{\xi}\\
	&= \eval[\int_\Omega k(a, x, \alpha, \xi) y(t, \alpha, \xi) \dd{\xi}|_{\alpha = 0}^{\amax} - \int_\Omega \int_0^\amax k_\alpha(a, x, \alpha, \xi) y(t, \alpha, \xi) \dd{\alpha} \dd{\xi}.
\end{align}
Because of the condition $k(a, x, \amax, \xi) = 0$, the term where we evaluate the integral at $\amax$ vanishes. (Note that this condition would not be necessary if one could show that $y(a = \amax) = 0$, as in \cite[Thm. 4.2]{AnitaArnautuCapasso}.) Using the implicit birth condition \eqref{eq:NonlBirthLaw} we can rewrite the $(\alpha = 0)$-term as \begin{align}
	\MoveEqLeft \int_\Omega k(a, x, 0, \xi) y(t, 0, \xi) \dd{\xi} = \int_\Omega k(a, x, 0, \xi) \qty(\int_0^\amax \beta_0(\alpha, \xi) y(t, \alpha, \xi) \dd{\alpha} + g_0(t, \xi)) \dd{\xi}\\
	&= \int_\Omega \int_0^\amax \big(k(a, x, 0, \xi) \beta_0(\alpha, \xi)\big) y(t, \alpha, \xi) \dd{\alpha} \dd{\xi} + \int_\Omega k(a, x, 0, \xi) g_0(t, \xi) \dd{\xi},
\end{align}
which shows that $\delta \Lambda$ will also depend on $g_0$, in a spatially nonlocal way. Hence, for $v$, $w \in L^\infty((0, T), \hilbert)$ we introduce the notation $\delta \Lambda(v, g_0) w$ defined by \begin{equation} \label{eq:deltaLambdaDef}
	\delta\Lambda(v, g_0) w(t, a, x) \coloneqq  \Lambda(v)\delta w + \Lambda(\delta v) w + \Lambda_1(v) w + \Lambda_2(g_0(t)) w,\\
\end{equation}
where, with a similar notation as in \cref{eq:LambdaDefinition2} we let ($h$, $i = 1, \ldots, n$) \begin{align}
	\Lambda_1(v)^{hi} &\coloneqq \sum_{j = 1}^{n} \int_\Omega \int_0^\amax \tilde{k}^{hij}(a, x, \alpha, \xi) v_j(\alpha, \xi) \dd{\alpha} \dd{\xi},\\
	\tilde{k}^{hij}(a, x, \alpha, \xi) &\coloneqq k_a^{hij}(a, x, \alpha, \xi) + k_\alpha^{hij}(a, x, \alpha, \xi) + \sum_{\ell = 1}^{n} k^{hi\ell}(a, x, 0, \xi) \beta_0^{\ell j}(\alpha, \xi),\\
	\Lambda_2(g_0(t))^{hi} &\coloneqq \sum_{j = 1}^{n} \int_\Omega k^{hij}(a, x, 0, \xi) (g_0)_j(t, \xi) \dd{\xi}.
\end{align}
This will be the expression we use for $\delta \Lambda$ in \cref{eq:NonlRelEq}. Furthermore we write \begin{equation} \label{eq:EinsPlusDeltaLambdaDef}
	(1 + \tau \delta)\Lambda(v, g_0) w \coloneqq \Lambda(v) w + \tau \delta \Lambda(v, g_0) w,
\end{equation}
with $\delta \Lambda$ defined as in \cref{eq:deltaLambdaDef}. In total, \cref{eq:NonlRelEq} takes the following form: \begin{equation}
	(1 + \tau \delta) (\delta y + L y) + \Lambda(y) y + \tau\Lambda(y)\delta y + \tau \Lambda(\delta y) y + \tau \Lambda_1(y) y + \tau \Lambda_2(g_0) y = \sigma \laplace y + f.
\end{equation}
\Cref{assump:final} yields that $k_a + k_\alpha + k(\alpha = 0) \beta_0 \in L^\infty(\mathcal{I} \times \Omega, \hilbert)$. Thus we can estimate \begin{align} \label{eq:deltaLambdaEstimate}
	\norm{\delta\Lambda(v(t), g_0(t)) w(t)}_\hilbert &\leqc \norm{v(t)}_\hilbert \norm{w(t)}_\hilbert + \norm{\delta v(t)}_\hilbert \norm{w(t)}_\hilbert\\
	&\quad + \norm{v(t)}_\hilbert \norm{\delta w(t)}_\hilbert + \norm{g_0(t)}_H \norm{v(t)}_\hilbert
\end{align}
This shows that, contrary to \Cref{ass:ImplRelax}, assuming $g_0 \in L^2((0, T), H)$ is not enough, we need the stronger assumption $g_0 \in L^\infty((0, T), H)$. Combining \cref{eq:LambdaEstimateLocal} with \cref{eq:deltaLambdaEstimate} and integration with respect to the time variable yields \begin{subequations}\label{eq:NonlEst1}
	\begin{equation+}
		\begin{aligned}
			\MoveEqLeft \norm{(1 + \tau\delta)\Lambda(v, g_0) w}_{L^2((0, T), \hilbert)} \leqc \norm{v}_{L^\infty((0, T), \hilbert)} \norm{w}_{L^2((0, T), \hilbert)} + \tau \norm{\delta v}_{L^\infty((0, T), \hilbert)} \norm{w}_{L^2((0, T), \hilbert)}\\
			&\quad + \tau \norm{v}_{L^\infty((0, T), \hilbert)} \norm{\delta w}_{L^2((0, T), \hilbert)} + \tau \norm{g_0}_{L^\infty((0, T), H)} \norm{w}_{L^2((0, T), \hilbert)},
		\end{aligned}
	\end{equation+}
	or alternatively \begin{equation+}
		\begin{aligned}
			\MoveEqLeft \norm{(1 + \tau\delta)\Lambda(v, g_0) w}_{L^2((0, T), \hilbert)} \leqc \norm{v}_{L^2((0, T), \hilbert)} \norm{w}_{L^\infty((0, T), \hilbert)} + \tau \norm{\delta v}_{L^2((0, T), \hilbert)} \norm{w}_{L^\infty((0, T), \hilbert)}\\
			&\quad + \tau \norm{v}_{L^2((0, T), \hilbert)} \norm{\delta w}_{L^\infty((0, T), \hilbert)} + \tau \norm{g_0}_{L^\infty((0, T), H)} \norm{w}_{L^2((0, T), \hilbert)}.
		\end{aligned}
	\end{equation+}
\end{subequations}

\textcolor{black}{WE now turn our attention to the nonlinear first-order birth equation \begin{align}
		(\delta y)(a = 0) &= \int_0^\amax \beta_L(\alpha, x) y(t, \alpha, x) + \beta_\nabla(\alpha, x) \cdot \nabla y(t, \alpha, x) + \beta_1(\alpha, x) (\delta y)(t, \alpha, x)\\
		&\quad + \qty(\beta_1(\alpha, x) \Lambda(\alpha, x, y) - \Lambda(0, x, y) \beta(\alpha, x)) y(t, \alpha, x) \dd{\alpha} - \Lambda(0, y) g_0(t, x) + g_1(t, x)
\end{align}}
from \cref{eq:NonlBirthLaw2}.
We define \begin{align}
	G(v)(w, g_0) &\coloneqq \int_0^\amax (\beta_1(\alpha) \Lambda(\alpha, v) - \Lambda(a=0, v) \beta_0(\alpha)) w(\alpha) \dd{\alpha} - \Lambda(a = 0, v) g_0 \label{eq:GDef}
\end{align} 
for $v$, $w \in L^\infty((0, T)), \hilbert$, where $\Lambda(a = 0)$ is defined as in \cref{eq:LambdaA0}. Then we interpret \cref{eq:NonlBirthLaw2} as \begin{equation} \label{eq:NonlBirthLaw1Real}
	\delta y(a = 0) = \int_0^\amax \beta_1(\alpha) \delta y(\alpha) + \beta_L(\alpha) y(\alpha) \textcolor{black}{+ \beta_\nabla(\alpha) \cdot \nabla y(\alpha)} \dd{\alpha} + G(y)(y, g_0) + g_1 
\end{equation}
From \Cref{assump:final} we can conclude that $\beta_L \in L^\infty(\mathcal{I} \times \Omega)$.
Note that $G$ from \cref{eq:GDef} is nonlocal in $v$ and linear in both of its arguments $v$ and $(w, g_0)$, hence the seemingly strange notation. We can estimate \begin{subequations}\label{eq:Gestimate}
	\begin{equation+}
		\begin{aligned}
			\MoveEqLeft \norm{G(v)(w, g_0)}_{L^2((0, T), H)} \leqc \norm{\beta_1 \Lambda(v) w}_{L^2((0, T), \hilbert)}\\
			&\quad + \norm{\Lambda(a=0, v) \beta_0 w}_{L^2((0, T), \hilbert)} + \norm{\Lambda(a = 0, v) g_0}_{L^2((0, T), H)}\\*
			&\leqc \norm{v}_{L^\infty((0, T), \hilbert)} \norm{w}_{L^2((0, T), \hilbert)} + \norm{v}_{L^2((0, T), \hilbert)} \norm{g_0}_{L^\infty((0, T), H)},
		\end{aligned}
	\end{equation+}
	or alternatively \begin{equation+}
		\norm{G(v)(w, g_0)}_{L^2((0, T), H)} \leqc \norm{v}_{L^2((0, T), \hilbert)} \norm{w}_{L^\infty((0, T), \hilbert)} + \norm{v}_{L^2((0, T), \hilbert)} \norm{g_0}_{L^\infty((0, T), H)}.
	\end{equation+}
\end{subequations}
				
Altogether, \cref{eq:NonlRelSystem} can be reformulated as \begin{equation} \label{eq:NonlRelSystem2}
	\begin{aligned}
		(1 + \tau \delta) (\delta y + L y)  &= \sigma \laplace y - \Lambda(y) y - \tau\Lambda(y)\delta y - \tau \Lambda(\delta y) y - \tau \Lambda_1(y) y - \tau \Lambda_2(g_0) y + f,\\
		y(t = 0) &= y_0, \quad \delta y(t = 0) = y_1, \quad \partial_\nu y = 0 \text{ in } \partial\Omega,\\
		y(a = 0) &= \int_0^\amax \beta_0(\alpha) y(\alpha) \dd{\alpha} + g_0,\\
		\delta y (a = 0) &= \int_0^\amax \beta_1(\alpha) \delta y(\alpha) + \beta_L(\alpha) y(\alpha) \textcolor{black}{+ \beta_\nabla(\alpha) \cdot \nabla y(\alpha)} \dd{\alpha} + G(y)(y, g_0) + g_1. 
	\end{aligned}
\end{equation}
The strategy of finding a solution to \cref{eq:NonlRelSystem2} is similar to \cite[Thm. 2.15]{AzmiSchlosser}: find fixed points of the map $\Phi$ which maps a given $w$ to the weak solution $y$ (in the sense of \Cref{thm:RelImplExistence}) of \begin{equation} \label{eq:PhiSystem}
	\begin{aligned}
		(1 + \tau \delta) (\delta y + L y)  &= \sigma \laplace y - (1 + \tau \delta) \Lambda(w, g_0) w + f,\\
		y(t = 0) &= y_0, \quad \delta y(t = 0) = y_1, \quad \partial_\nu y = 0 \text{ in } \partial\Omega,\\
		y(a = 0) &= \int_0^\amax \beta_0(\alpha) y(\alpha) \dd{\alpha} + g_0,\\
		\delta y (a = 0) &= \int_0^\amax \beta_1(\alpha) \delta y(\alpha) + \beta_L(\alpha) y(\alpha) \textcolor{black}{+ \beta_\nabla(\alpha) \cdot \nabla y(\alpha)} \dd{\alpha} + G(w)(w, g_0) + g_1. 
	\end{aligned}
\end{equation}
Using \cref{eq:NonlEst1} and \cref{eq:Gestimate} shows that under \Cref{assump:final}, for any $w \in L^\infty((0, T), \hilbert)$ with $\delta w \in L^\infty((0, T), \hilbert)$, the conditions of \Cref{thm:RelImplExistence} are satisfied. Thus, \cref{eq:NonlEst2*} together with the fact that $\tau \leq \tau_{\text{max}}$ yields an estimate for a solution of \cref{eq:PhiSystem}: \begin{align}
	\MoveEqLeft \tau \norm{\delta y(t)}_\hilbert^2 + \norm{y(t)}_\vilbert^2 \leqc \norm{y_0}_\vilbert^2 + \tau \norm{y_1}_\hilbert^2 + \norm{f}_{L^2((0, T), \hilbert)}^2 + \norm{(1 + \tau \delta)\Lambda(w, g_0) w}_{L^2((0, t), \hilbert)^2}^2\\
	&\quad + \norm{g_0}_{L^2((0, T), V)}^2 + \tau \norm{G(w)(w, g_0) + g_1}_{L^2((0, t), H)}^2\\
	&\leqc \norm{y_0}_\vilbert^2 + \tau \norm{y_1}_\hilbert^2 + \norm{g_0}_{L^2((0, T), V)}^2 + \norm{f}_{L^2((0, T), \hilbert)}^2\\
	&\quad + \Big(\norm{w}_{L^2((0, T), \hilbert)} \norm{w}_{L^\infty((0, T), \hilbert)} + \tau \norm{\delta w}_{L^2((0, T), \hilbert)} \norm{w}_{L^\infty((0, T), \hilbert)}\\
	&\quad + \tau \norm{g_0}_{L^\infty((0, T), H)} \norm{w}_{L^2((0, T), \hilbert)} \Big)^2 + \tau \Big(\norm{w}_{L^2((0, T), \hilbert)} \norm{w}_{L^\infty((0, T), \hilbert)}\\
	&\quad + \norm{w}_{L^2((0, T), \hilbert)} \norm{g_0}_{L^\infty((0, T), H)} + \norm{g_1}_{L^2((0, t), H)} \Big)^2\\
	&\leqc \norm{y_0}_\vilbert^2 + \tau \norm{y_1}_\hilbert^2 + \norm{f}_{L^2((0, T), \hilbert)}^2 + \norm{g_0}_{L^2((0, T), V)}^2 + \tau \norm{g_1}_{L^2((0, t), H)}^2\\
	&\quad + \norm{w}_{L^2((0, T), \hilbert)}^2 \norm{w}_{L^\infty((0, T), \hilbert)}^2 + \tau^2 \norm{\delta w}_{L^2((0, T), \hilbert)}^2 \norm{w}_{L^\infty((0, T), \hilbert)}^2\\
	&\quad + \tau \norm{g_0}_{L^\infty((0, T), H)}^2 \norm{w}_{L^2((0, T), \hilbert)}^2. \label{eq:NonlEst2}
\end{align}


Now we can proceed as in \cite[Thm. 14.2 and Lem. 14.3]{Smoller}.
\begin{Lem} \label{Lem:LambdaGLipschitz}
	$(1 + \tau \delta) \Lambda$ and $G$ are locally Lipschitz continuous.
\end{Lem}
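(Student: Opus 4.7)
The key observation is that, once $g_0$ is regarded as fixed, both operators are (affine) bilinear in their variable arguments. For any bounded bilinear map $B:X\times X\to Y$, the telescoping identity
\begin{equation*}
B(u_1,v_1) - B(u_2,v_2) = B(u_1-u_2, v_1) + B(u_2, v_1 - v_2)
\end{equation*}
together with a boundedness estimate $\norm{B(u,v)}_Y\leqc\norm{u}_X\norm{v}_X$ immediately implies local Lipschitz continuity with Lipschitz constant proportional to the radius of the ball. All the analytic work has in fact already been done in the boundedness estimates \eqref{eq:NonlEst1} and \eqref{eq:Gestimate}; what remains is organizational.

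For $(1+\tau\delta)\Lambda(v,g_0)w$, I would expand the definition via \cref{eq:deltaLambdaDef} and \cref{eq:EinsPlusDeltaLambdaDef} as
\begin{equation*}
(1+\tau\delta)\Lambda(v,g_0)w = \Lambda(v)w + \tau\Lambda(v)\delta w + \tau\Lambda(\delta v)w + \tau\Lambda_1(v)w + \tau\Lambda_2(g_0)w,
\end{equation*}
and observe that, under \Cref{assump:final}, each of the first four summands is a bounded bilinear form in its pair of function arguments, while the last one is linear in $w$ and independent of $v$. Writing out the difference $(1+\tau\delta)\Lambda(v_1,g_0)w_1 - (1+\tau\delta)\Lambda(v_2,g_0)w_2$, applying the telescoping identity termwise, and invoking the pointwise-in-time version \eqref{eq:LambdaEstimateLocal} of \eqref{eq:NonlEst1} together with the derivative estimate \eqref{eq:deltaLambdaEstimate} produces a bound of the form
\begin{equation*}
C(R)\bigl(\norm{v_1-v_2}_{*} + \norm{\delta v_1 - \delta v_2}_{*} + \norm{w_1-w_2}_{*} + \norm{\delta w_1-\delta w_2}_{*}\bigr),
\end{equation*}
where $\norm{\cdot}_{*}$ stands for the relevant mixed $L^\infty$- and $L^2$-in-time norms with values in $\hilbert$, and $C(R)$ depends polynomially on $R\coloneqq \max_i(\norm{v_i}_{*},\norm{\delta v_i}_{*},\norm{w_i}_{*},\norm{\delta w_i}_{*})$ and on $\norm{g_0}_{L^\infty((0,T),H)}$.

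The argument for $G$ is analogous and in fact simpler, since $G$ involves no $\delta$-derivatives. Recall from \cref{eq:GDef} that $G(v)(w,g_0)$ is bilinear in $(v,w)$ plus an inhomogeneous piece $-\Lambda(a=0,v)g_0$ that is linear in $v$ for fixed $g_0$. Telescoping in $v$ and $w$ separately and invoking \eqref{eq:Gestimate} for each resulting bilinear form yields the desired local Lipschitz bound with constant again depending only on $R$ and $\norm{g_0}_{L^\infty((0,T),H)}$.

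The only (mild) obstacle is bookkeeping: making sure that the mixed $L^\infty$- and $L^2$-in-time norms on the right-hand sides of \eqref{eq:NonlEst1} and \eqref{eq:Gestimate} are consistent with the ambient space in which the subsequent fixed-point argument for $\Phi$ will be carried out, so that the Lipschitz constant is indeed finite and remains uniform in $\tau\in(0,\tau_{\text{max}})$. No genuinely new analytic idea is required beyond the bilinearity combined with the already-established boundedness.
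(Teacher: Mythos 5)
Your proposal is correct and follows essentially the same route as the paper: telescope the bilinear maps via $B(u_1,v_1)-B(u_2,v_2)=B(u_1-u_2,v_1)+B(u_2,v_1-v_2)$ and invoke the already-established boundedness estimates \eqref{eq:NonlEst1} and \eqref{eq:Gestimate}. The only (cosmetic) difference is that the paper also telescopes in $g_0$, i.e.\ allows $g_0^1\neq g_0^2$ and picks up a term $\tau\norm{g_0^1-g_0^2}_{L^\infty((0,T),H)}$, which is needed later in \Cref{lem:NonlRelGronwall}; since both operators are linear in $g_0$ as well, this is an immediate extension of your argument.
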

\begin{proof}
	Let $v_i$, $w_i \in L^\infty((0, T), \hilbert)$ and $g_0^i \in L^2((0, T), H)$ ($i \in \set{1, 2}$). If also $\delta v_i$, $\delta w_i \in L^\infty((0, T), \hilbert)$, \cref{eq:NonlEst1} yields \begin{align}
		&\quad \norm{(1 + \tau\delta)[\Lambda(v_1, g_0^1) w_1] - (1 + \tau\delta)[\Lambda(v_2, g_0^2) w_2]}_{L^2((0, T), \hilbert)}\\
		&\leqc \norm{(1 + \tau\delta)[\Lambda(v_1, g_0^1) (w_1- w_2)]}_{L^2((0, T), \hilbert)} + \norm{(1 + \tau\delta)[\Lambda(v_1 - v_2, g_0^1 - g_0^2) w_2]}_{L^2((0, T), \hilbert)}\\
		&\leqc \norm{v_1}_{L^\infty((0, T), \hilbert)} \norm{w_1 - w_2}_{L^2((0, T), \hilbert)} + \tau \norm{\delta v_1}_{L^\infty((0, T), \hilbert)} \norm{w_1 - w_2}_{L^2((0, T), \hilbert)}\\
		&\quad + \tau \norm{v_1}_{L^\infty((0, T), \hilbert)} \norm{\delta w_1 - \delta w_2}_{L^2((0, T), \hilbert)} + \tau \norm{g_0^1}_{L^\infty((0, t), H)} \norm{w_1 - w_2}_{L^2((0, T), \hilbert)}\\
		&\quad + \norm{v_1 - v_2}_{L^2((0, T), \hilbert)} \norm{w_2}_{L^\infty((0, T), \hilbert)} + \tau \norm{\delta v_1 - \delta v_2}_{L^2((0, T), \hilbert)} \norm{w_2}_{L^\infty((0, T), \hilbert)}\\
		&\quad + \tau \norm{v_1 - v_2}_{L^2((0, T), \hilbert)} \norm{\delta w_2}_{L^\infty((0, T), \hilbert)} + \tau \norm{g_0^1 - g_0^2}_{L^\infty((0, t), H)} \norm{w_2}_{L^2((0, T), \hilbert)}\\
		&\leqc \max\set{\norm{v_1}_{L^\infty((0, T), \hilbert)}, \tau \norm{\delta v_1}_{L^\infty((0, T), \hilbert)}, \norm{w_2}_{L^\infty((0, T), \hilbert)}, \tau \norm{\delta w_2}_{L^\infty((0, T), \hilbert)}, \tau \norm{g_0^1}_{L^\infty((0, t), H)}}\\
		&\quad \cdot \Big(\norm{v_1 - v_2}_{L^2((0, T), \hilbert)} + \norm{w_1 - w_2}_{L^2((0, T), \hilbert)} + \tau \norm{\delta v_1 - \delta v_2}_{L^2((0, T), \hilbert)}\\
		&\qquad + \tau \norm{\delta w_1 - \delta w_2}_{L^2((0, T), \hilbert)} + \tau \norm{g_0^1 - g_0^2}_{L^\infty((0, T), H)} \Big). \label{eq:LambdaLipschitz}
	\end{align}
	Similarly, for $G$ we have, using \cref{eq:Gestimate}: \begin{align}
		&\quad \norm{G(v_1)(w_1, g_0^1) - G(v_2)(w_2, g_0^2)}_{L^2((0, T), H)}\\
		&\leq \norm{G(v_1 - v_2)(w_1, g_0^1)}_{L^2((0, T), H)} + \norm{G(v_2)(w_1 - w_2, g_0^1 - g_0^2)}_{L^2((0, T), H)}\\
		&\leqc \max\set{\norm{v_1}_{L^\infty((0, T), \hilbert)}, \norm{w_2}_{L^\infty((0, T), \hilbert)}, \norm{g_0^1}_{L^\infty((0, T), H)}}\\*
		&\quad \cdot \qty(\norm{v_1 - v_2}_{L^2((0, T), \hilbert)} + \norm{w_1 - w_2}_{L^2((0, T), \hilbert)} + \norm{g_0^1 - g_0^2}_{L^2((0, T), H)}). \label{eq:GLipschitz}
	\end{align}
\end{proof}

\begin{Lem}\label{lem:NonlRelGronwall}
	Let $y^1$ and $y^2$ be two solutions of \cref{eq:NonlRelSystem2} to the respective data $y_0^i$, $y_1^i$, $f^i$, $g_0^i$, $g_1^i$ ($i \in \set{1, 2}$) and let $y \coloneqq y^1 - y^2$. Further assume that $\norm{y^i}_{L^\infty((0, T), \hilbert)}$, $\tau \norm{\delta y^i}_{L^\infty((0, T), \hilbert)}$ and $\tau \norm{g_0^i}_{L^\infty((0, T), H)}$ are bounded above by some $S$ for $i \in \set{1, 2}$. Then we have an estimate of the form \begin{align}
		\tau \norm{\delta y}_{C([0, T], \hilbert)}^2 + \norm{y(t)}_{C([0, T], \vilbert)}^2 &\leqc \norm{y_0^1 - y_0^2}_\vilbert^2 + \tau \norm{y_1^1 - y_1^2}_\hilbert^2 + \norm{f^1 - f^2}_{L^2((0, T), \hilbert)}^2\\
		&\quad + \norm{g_0^1 - g_0^2}_{L^2((0, T), V)}^2 + \tau \norm{g_1^1 - g_1^2}_{L^2((0, T), H)}^2.
	\end{align}
\end{Lem}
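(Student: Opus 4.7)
The plan is to set $y \coloneqq y^1 - y^2$ and derive a \emph{linear} implicit-birth system of type \eqref{eq:linImplSystem} for $y$ whose inhomogeneities are the differences of the nonlinear terms. Subtracting the two instances of \eqref{eq:NonlRelSystem2} gives, schematically,
\begin{gather*}
    (1+\tau\delta)(\delta y + Ly) = \sigma\laplace y - \bigl[(1+\tau\delta)\Lambda(y^1, g_0^1) y^1 - (1+\tau\delta)\Lambda(y^2, g_0^2) y^2\bigr] + (f^1 - f^2),
\end{gather*}
with initial values $y_0^1 - y_0^2$, $y_1^1 - y_1^2$, the source $g_0^1 - g_0^2$ in the birth condition for $y$, and the source $G(y^1)(y^1, g_0^1) - G(y^2)(y^2, g_0^2) + (g_1^1 - g_1^2)$ in the birth condition for $\delta y$. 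On this linear system we can apply the energy estimate \eqref{eq:NonlEst2*} from \Cref{thm:RelImplExistence} on any subinterval $(0, t) \subset (0, T)$.

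The second step is to control the two nonlinear inhomogeneities by the Lipschitz estimates \eqref{eq:LambdaLipschitz} and \eqref{eq:GLipschitz} of \Cref{Lem:LambdaGLipschitz}, applied with $v_i = w_i = y^i$. By hypothesis the maxima appearing on the right-hand sides of these estimates are bounded by $S$, so
\begin{gather*}
    \norm{(1+\tau\delta)\Lambda(y^1, g_0^1) y^1 - (1+\tau\delta)\Lambda(y^2, g_0^2) y^2}_{L^2((0,t), \hilbert)}^2 + \tau \norm{G(y^1)(y^1,g_0^1) - G(y^2)(y^2, g_0^2)}_{L^2((0,t), H)}^2 \\ \leqc S^2 \bigl(\norm{y}_{L^2((0,t), \hilbert)}^2 + \tau^2 \norm{\delta y}_{L^2((0,t), \hilbert)}^2\bigr) + (\text{data-difference terms})
\end{gather*}
where the data-difference terms can be absorbed into the right-hand side of the target estimate. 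The $\tau$ appearing from the linear energy bound in front of the $G$-term is important: it matches the $\tau^2$ coming from the Lipschitz estimate of $(1+\tau\delta)\Lambda$ so that, after using $\tau \leq \tau_{\text{max}}$, one factor of $\tau$ remains, matching the $\tau\|\delta y\|^2$ on the left.

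The third step is to pass from $L^2$-in-time norms on the right to the $C([0,\cdot])$ norms appearing on the left via $\norm{y}_{L^2((0,t), \hilbert)}^2 \leq \int_0^t \norm{y}_{C([0,s], \vilbert)}^2 \dd{s}$ and analogously $\tau^2 \norm{\delta y}_{L^2((0,t), \hilbert)}^2 \leq \tau_{\max} \int_0^t \tau \norm{\delta y}_{C([0,s], \hilbert)}^2 \dd{s}$. Setting
\begin{equation*}
    \psi(t) \coloneqq \tau \norm{\delta y}_{C([0,t], \hilbert)}^2 + \norm{y}_{C([0,t], \vilbert)}^2,
\end{equation*}
this yields an integral inequality of the form $\psi(t) \leqc \mathcal{D} + C_S \int_0^t \psi(s) \dd{s}$, where $\mathcal{D}$ is the desired combination of data-difference norms. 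Grönwall's inequality then produces $\psi(T) \leqc \mathcal{D} \cdot e^{C_S T}$, absorbing the exponential factor into the generic constant $C$.

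The main technical obstacle is the careful bookkeeping of the $\tau$-powers so that the final constant remains independent of $\tau$, and matching the $L^\infty$-in-time norms appearing in the Lipschitz constant of \Cref{Lem:LambdaGLipschitz} against the integrated norms on the right of the claimed estimate (this uses the uniform bound $S$ and the inclusion $L^2((0,T), V) \hookrightarrow L^2((0,T), H)$ for the $g_0$-contribution). Once this bookkeeping is done, the Grönwall step is routine.
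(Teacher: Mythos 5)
Your proposal follows essentially the same route as the paper: subtract the two instances of \cref{eq:NonlRelSystem2} to get a linear system of type \eqref{eq:linImplSystem} for $y$, apply the energy estimate of \Cref{thm:RelImplExistence} on $(0,t)$, control the nonlinear source differences via the Lipschitz bounds of \Cref{Lem:LambdaGLipschitz} using the uniform bound $S$, and close with Grönwall. The $\tau$-bookkeeping you describe (absorbing $\tau^2\norm{\delta y}^2$ into $\tau\norm{\delta y}^2$ via $\tau\leq\tau_{\text{max}}$) matches what the paper does, so the argument is correct and not materially different.
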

\begin{proof}
	The difference $y$ is a solution to \begin{align}
		(1 + \tau \delta) (\delta y + L y) y &= \sigma(a) \laplace y - (1 + \tau \delta)(\Lambda(y^1, g_0^1) y^1 - \Lambda(y^2, g_0^2) y^2) + f^1 - f^2,\\
		y(t = 0) &= y_0^1 - y_0^2, \quad \delta y(t = 0) = y_1^1 - y_1^2,\\
		\partial_\nu y &= 0 \text{ in } \partial\Omega,\\
		y(a = 0) &= \int_0^\amax \beta_0(\alpha) y(\alpha) \dd{\alpha} + g_0^1 - g_0^2,\\
		(\delta y)(a = 0) &= \int_0^\amax \beta_1(\alpha) \delta y(\alpha) + \beta_L(\alpha) y(\alpha) \textcolor{black}{+ \beta_\nabla(\alpha) \cdot \nabla y(\alpha)} \dd{\alpha}\\
		&\quad + G(y^1)(y^1, g_0^1) - G(y^2)(y^2, g_0^2) + g_1^1 - g_1^2. 
	\end{align}
	By \Cref{thm:RelImplExistence} and \Cref{Lem:LambdaGLipschitz} we obtain \begin{align}
		&\quad \tau \norm{\delta y(t)}_\hilbert^2 + \norm{y(t)}_\vilbert^2\\
		&\leqc \norm{y_0^1 - y_0^2}_\vilbert^2 + \tau \norm{y_1^1 - y_1^2}_\hilbert^2 + \norm{(1 + \tau \delta)(\Lambda(y^1, g_0^1) y^1 - \Lambda(y^2, g_0^2) y^2)}_{L^2((0, t), \hilbert)}^2\\
		&\quad + \norm{f^1 - f^2}_{L^2((0, t), \hilbert)}^2 + \norm{g_0^1 - g_0^2}_{L^2((0, t), V)}^2 + \tau \norm{g_1^1 - g_1^2}_{L^2((0, t), H)}^2\\
		&\quad + \tau \norm{G(y^1)(y^1, g_0^1) - G(y^2)(y^2, g_0^2)}_{L^2((0, t), H)}^2 \\
		&\leqc \norm{y_0^1 - y_0^2}_\vilbert^2 + \tau \norm{y_1^1 - y_1^2}_\hilbert^2 + \norm{f^1 - f^2}_{L^2((0, t), \hilbert)}^2\\
		&\quad + \norm{g_0^1 - g_0^2}_{L^2((0, t), V)}^2 + \tau \norm{g_1^1 - g_1^2}_{L^2((0, t), H)}^2 + \tau \norm{g_0^1 - g_0^2}_{L^2((0, t), H)}^2\\
		&\quad + \max\set{\norm{y^1}_{L^\infty((0, t), \hilbert)}^2, \tau \norm{\delta y^1}_{L^\infty((0, t), \hilbert)}^2, \tau \norm{g_0^1}_{L^\infty((0, t), H)}^2, \norm{y^2}_{L^\infty((0, t), \hilbert)}^2, \tau \norm{\delta y^2}_{L^\infty((0, t), \hilbert)}^2}\\
		&\quad \cdot \qty(\norm{y^1 - y^2}_{L^2((0, t), \hilbert)}^2 + \tau \norm{\delta y^1 - \delta y^2}_{L^2((0, t), \hilbert)}^2 + \tau \norm{g_0^1 - g_0^2}_{L^2((0, t), H)}^2). \label{eq:RelUniqueness}
	\end{align}
	Estimating by $S$ yields
	\begin{align}
		\MoveEqLeft \tau \norm{\delta y(t)}_\hilbert^2 + \norm{y(t)}_\vilbert^2 \leqc \norm{y_0^1 - y_0^2}_\vilbert^2 + \tau \norm{y_1^1 - y_1^2}_\hilbert^2 + \norm{f^1 - f^2}_{L^2((0, t), \hilbert)}^2\\
		&\quad + \norm{g_0^1 - g_0^2}_{L^2((0, t), V)}^2 + \tau \norm{g_1^1 - g_1^2}_{L^2((0, t), H)}^2 + \tau \norm{g_0^1 - g_0^2}_{L^2((0, t), H)}^2\\
		&\quad + S^2 \qty(\norm{y^1 - y^2}_{L^2((0, T), \hilbert)} + \tau \norm{\delta y^1 - \delta y^2}_{L^2((0, T), \hilbert)} + \tau \norm{g_0^1 - g_0^2}_{L^2((0, t), H)})^2.
	\end{align}
	Now Gronwall's lemma yields the estimate \begin{align}
		\MoveEqLeft \tau \norm{\delta y(t)}_\hilbert^2 + \norm{y(t)}_\vilbert^2 \leqc \norm{y_0^1 - y_0^2}_\vilbert^2 + \tau \norm{y_1^1 - y_1^2}_\hilbert^2 + \norm{f^1 - f^2}_{L^2((0, t), \hilbert)}^2\\*
		&\quad + \norm{g_0^1 - g_0^2}_{L^2((0, t), V)}^2 + \tau \norm{g_1^1 - g_1^2}_{L^2((0, t), H)}^2 + \tau \norm{g_0^1 - g_0^2}_{L^2((0, t), H)}^2\\*
		&\quad + \tau^2 S^2 \norm{g_0^1 - g_0^2}_{L^2((0, t), H)}^2,
	\end{align}
	and estimating $\tau$ by $\tau_{\text{max}}$ and norms on $(0, t)$ by those on $(0, T)$ shows the claim.
\end{proof}

Letting $y_0^1 = y_0^2$ etc. in the previous theorem directly yields
\begin{Kor}
	There is at most one weak solution $y \in L^\infty((0, T), \hilbert)$ to the system \eqref{eq:NonlRelSystem2}.
\end{Kor}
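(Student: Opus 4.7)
The plan is a direct application of \Cref{lem:NonlRelGronwall}. Suppose $y^1, y^2 \in L^\infty((0,T),\hilbert)$ are two weak solutions to \cref{eq:NonlRelSystem2} for the same data $y_0$, $y_1$, $f$, $g_0$, $g_1$. By the regularity provided by \Cref{thm:RelImplExistence} (which is built into the notion of weak solution used here), both $y^i$ satisfy $y^i \in C([0,T],\vilbert)$ and $\delta y^i \in C([0,T],\hilbert)$, in particular $y^i, \delta y^i \in L^\infty((0,T),\hilbert)$. Together with the hypothesis $g_0 \in L^\infty((0,T),H)$ from \Cref{assump:final}, we can choose
\begin{equation}
	S := \max_{i \in \{1,2\}} \max\set{\norm{y^i}_{L^\infty((0,T),\hilbert)},\, \tau \norm{\delta y^i}_{L^\infty((0,T),\hilbert)},\, \tau \norm{g_0}_{L^\infty((0,T),H)}} < \infty,
\end{equation}
so the hypotheses of \Cref{lem:NonlRelGronwall} are satisfied.

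Applying the lemma with $y_0^1 = y_0^2$, $y_1^1 = y_1^2$, $f^1 = f^2$, $g_0^1 = g_0^2$ and $g_1^1 = g_1^2$, every term on the right-hand side of the Gronwall-type estimate vanishes identically, yielding
\begin{equation}
	\tau \norm{\delta(y^1 - y^2)}_{C([0,T],\hilbert)}^2 + \norm{y^1 - y^2}_{C([0,T],\vilbert)}^2 \leq 0.
\end{equation}
Since the left-hand side is non-negative, it is zero, which forces $y^1 = y^2$ in $C([0,T],\vilbert)$ and hence in $L^\infty((0,T),\hilbert)$.

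I do not expect a genuine obstacle here: the bulk of the work has already been done in establishing the locally Lipschitz estimates for $(1+\tau\delta)\Lambda$ and $G$ in \Cref{Lem:LambdaGLipschitz} and in the Gronwall argument of \Cref{lem:NonlRelGronwall}. The only point worth checking is that the regularity class $y \in L^\infty((0,T),\hilbert)$ in which uniqueness is claimed is compatible with the regularity delivered by the existence theorem, so that the $L^\infty$-bounds on $y^i$ and $\delta y^i$ required by \Cref{lem:NonlRelGronwall} are automatic; this follows from the continuous embeddings $C([0,T],\vilbert) \hookrightarrow L^\infty((0,T),\hilbert)$ and $C([0,T],\hilbert) \hookrightarrow L^\infty((0,T),\hilbert)$.
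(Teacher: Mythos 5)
Your proof is correct and follows exactly the paper's route: the paper derives this corollary by setting $y_0^1 = y_0^2$, etc., in \Cref{lem:NonlRelGronwall} so that the right-hand side of the Gronwall estimate vanishes. Your additional remarks on the finiteness of $S$ and on the compatibility of the regularity classes are sensible bookkeeping that the paper leaves implicit.
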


\begin{Thm} \label{thm:RelMain}
	Under \Cref{assump:final} there exists a $T^* > 0$ such that for every $T \in (0, T^*)$ there is a weak solution $y \in C([0, T], \vilbert) \cap C(\bar{\mathcal{I}}, L^2((0, T), V))$ with $\delta y \in C([0, T], \hilbert) \cap C(\bar{\mathcal{I}}, L^2((0, T), H))$ to the system \eqref{eq:NonlRelSystem2}. That means, for all $v \in V$ the weak formulation \begin{gather}
		\skp{\tau \delta^2 y(t, a), v}_{V' \times V} + \skp{(1 + \tau L(a)) \delta y(t, a), v}_H + \skp{(L(a) + \tau L_a(a)) y(t, a), v}_H\\
		= \skp{\sigma(a) \nabla y(t, a), \nabla v}_H + \skp{f(t, a), v}_H - \skp{(1 + \tau \delta) \Lambda(y, g_0) y, v}_H,
	\end{gather} where $(1 + \tau \delta) \Lambda$ is defined as in \cref{eq:EinsPlusDeltaLambdaDef}, is satisfied and the initial conditions \eqref{eq:nonlinImplInit}, \eqref{eq:NonlBirthLaw} and \eqref{eq:NonlBirthLaw1Real} hold. In addition, $y$ satisfies the estimate \begin{equation}
		\norm{y}_{C([0, T], \vilbert)}^2 + \tau \norm{\delta y}_{C([0, T], \hilbert)}^2 \leqc \mathcal{K}
	\end{equation}
	where \begin{equation}
		\mathcal{K} \coloneqq \norm{y_0}_\vilbert^2 + \tau \norm{y_1}_\hilbert^2 + \norm{f}_{L^2((0, t), \hilbert)}^2 + \norm{g_0}_{L^2((0, t), V)}^2 + \tau \norm{g_1}_{L^2((0, T), H)}^2.
	\end{equation}
	More precisely, $T^*$ can be chosen as $C \cdot (\mathcal{K} + \tau \norm{g_0}_{L^\infty((0, T), H)}^2)\inv$ with a constant $C$ independent of $y_0$, $y_1$, $f$, $g_0$, $g_1$, or $\tau$, but possibly depending on $\tau_{\text{max}}$.
\end{Thm}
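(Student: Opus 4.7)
The plan is a Banach fixed-point argument applied to the map $\Phi$ already introduced in \eqref{eq:PhiSystem}, restricted to a small time interval $[0,T^*]$. Define the Banach space
\[
\mathcal{X}_T \coloneqq \{\, w \in C([0,T], \vilbert) : \delta w \in C([0,T], \hilbert) \,\}
\]
equipped with the $\tau$-weighted norm
\[
\norm{w}_{\mathcal{X}_T}^2 \coloneqq \norm{w}_{C([0,T], \vilbert)}^2 + \tau \norm{\delta w}_{C([0,T], \hilbert)}^2,
\]
and let $\mathfrak{B}_R \subset \mathcal{X}_T$ denote the closed ball of radius $R$ with $R^2 \coloneqq 2 C_0 \mathcal{K}$, where $C_0$ is the implicit constant from \eqref{eq:NonlEst2*}. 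The goal is to choose $T^* > 0$ so that $\Phi$ is a self-map of $\mathfrak{B}_R$ and a contraction there.

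First, I would check that $\Phi$ is well defined on $\mathfrak{B}_R$: for $w \in \mathfrak{B}_R$, the forcing $(1+\tau\delta)\Lambda(w,g_0)w$ lies in $L^2((0,T), \hilbert)$ by \eqref{eq:NonlEst1} and the boundary datum $G(w)(w,g_0) + g_1$ lies in $L^2((0,T), H)$ by \eqref{eq:Gestimate}, so Theorem \ref{thm:RelImplExistence} delivers a unique element $y = \Phi(w) \in \mathcal{X}_T$. Second, for the self-map property I would plug the bound
\[
\norm{w}_{L^\infty((0,T), \hilbert)}^2 \leq R^2, \qquad \tau \norm{\delta w}_{L^\infty((0,T), \hilbert)}^2 \leq R^2
\]
into the a priori estimate \eqref{eq:NonlEst2} and convert the $L^2$-in-time norms via $\norm{w}_{L^2((0,T),\hilbert)}^2 \leq T \norm{w}_{L^\infty((0,T),\hilbert)}^2$. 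This produces
\[
\norm{\Phi(w)}_{\mathcal{X}_T}^2 \leq C_0 \mathcal{K} + C_1 T R^2 \bigl( R^2 + \tau \norm{g_0}_{L^\infty((0,T), H)}^2 \bigr),
\]
and choosing $T^*$ of order $(\mathcal{K} + \tau \norm{g_0}_{L^\infty((0,T), H)}^2)^{-1}$ forces the right-hand side to be at most $R^2$.

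Third, for the contraction property I would apply the same strategy that underlies Lemma \ref{lem:NonlRelGronwall}, but to the linear equation satisfied by $\Phi(w^1) - \Phi(w^2)$ for two inputs $w^1, w^2 \in \mathfrak{B}_R$. Since $\Phi(w^i)$ solves \eqref{eq:PhiSystem} with forcing depending only on $w^i$ and the same data, the difference satisfies a linearized system whose forcing is controlled by the local Lipschitz estimates from Lemma \ref{Lem:LambdaGLipschitz} in terms of $w^1 - w^2$. Bounding $L^2$-in-time norms by $T^{1/2}$ times $L^\infty$-in-time norms and using that the Lipschitz constants are dominated by $R$ on $\mathfrak{B}_R$ yields
\[
\norm{\Phi(w^1) - \Phi(w^2)}_{\mathcal{X}_T}^2 \leq C_2 T \bigl(R^2 + \tau \norm{g_0}_{L^\infty((0,T),H)}^2\bigr) \norm{w^1 - w^2}_{\mathcal{X}_T}^2.
\]
Shrinking $T^*$ if necessary to $C \cdot (\mathcal{K} + \tau \norm{g_0}_{L^\infty((0,T),H)}^2)^{-1}$ makes the prefactor strictly less than $1$, so Banach's fixed-point theorem yields a unique $y \in \mathfrak{B}_R$ with $\Phi(y) = y$. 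By construction this $y$ is a weak solution of \eqref{eq:NonlRelSystem2}, the stated energy estimate follows directly from the self-map bound, and uniqueness in the whole class is already recorded in the corollary preceding the theorem.

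The main obstacle I anticipate is careful bookkeeping of the $\tau$-dependence. The estimates \eqref{eq:NonlEst1}, \eqref{eq:Gestimate} and \eqref{eq:LambdaLipschitz} combine un-weighted $\Lambda$-terms with $\tau$-weighted $\delta \Lambda$-terms, and only if all norms are measured in the weighted $\mathcal{X}_T$-norm do the powers of $\tau$ cancel so that the constants stay independent of $\tau$ and the threshold $T^*$ acquires exactly the advertised structure, rather than the degraded form with a $\tau^{-1}$ factor that would arise from the semigroup construction flagged in Remark \ref{rem:SemigroupRel}. Verifying that this cancellation happens simultaneously in the self-map step, the contraction step, and in the term $\tau \norm{g_0}_{L^\infty((0,T), H)}^2$ coming from $\Lambda_2(g_0)$ is the delicate point of the proof.
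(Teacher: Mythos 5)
Your proposal is correct and follows essentially the same route as the paper: a Banach fixed-point argument for the map $\Phi$ of \eqref{eq:PhiSystem} on a closed bounded subset of the $\tau$-weighted space $C([0,T],\vilbert)\cap\{\delta u\in C([0,T],\hilbert)\}$, with the self-map and contraction properties extracted from \eqref{eq:NonlEst2} and \Cref{Lem:LambdaGLipschitz} and $T^*$ chosen proportional to $(\mathcal{K}+\tau\norm{g_0}_{L^\infty((0,T),H)}^2)\inv$. The only cosmetic difference is that the paper centers its invariant set at $\bar y=\Phi(0)$ rather than at the origin, which changes nothing essential.
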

\begin{proof}	
	Let $X \coloneqq \set{u \in C([0, T], \vilbert) \where \exists \delta u \in C([0, T], \hilbert)}$ endowed with the norm \begin{equation}
		\norm{u}_{X, \tau} \coloneqq \qty(\norm{u}_{C([0, T], \vilbert)}^2 + \tau \norm{\delta u}_{C([0, T], \hilbert)}^2)^{1/2},
	\end{equation}
	then it is easy to see that $X$ is a Banach space. Further let $\Phi$ as in \cref{eq:PhiSystem}. Estimate \eqref{eq:NonlEst2} shows that $\Phi$ is a well-defined map in $X$. Furthermore let $\bar{y} \coloneqq \Phi(0)$ the solution to the linear equation
	\begin{align}
		(1 + \tau \delta) (\delta \bar{y} + L \bar{y}) y &= \sigma(a) \laplace \bar{y} + f,\\*
		\bar{y}(t = 0) &= y_0, \quad \delta \bar{y}(t = 0) = y_1,\\*
		\partial_\nu \bar{y} &= 0 \text{ in } \partial\Omega,\\*
		\bar{y}(a = 0) &= \int_0^\amax \beta_0(\alpha) \bar{y}(\alpha) \dd{\alpha} + g_0\\*
		(\delta \bar{y})(a = 0) &= \int_0^\amax \beta_1(\alpha) \delta \bar{y}(\alpha) + \beta_L(\alpha) \bar{y}(\alpha) \textcolor{black}{+ \beta_\nabla(\alpha) \cdot \nabla \bar{y}(\alpha)} \dd{\alpha} + g_1.
	\end{align}
	From \cref{eq:NonlEst2*} we have the estimate \begin{equation}
		\norm{\bar{y}}_{X, \tau}^2 \leqc \norm{y_0}_\vilbert^2 + \tau \norm{y_1}_\hilbert^2 + \norm{f}_{L^2((0, t), \hilbert)}^2 + \norm{g_0}_{L^2((0, T), V)}^2 + \tau \norm{g_1}_{L^2((0, T), H)}^2 \leqc \mathcal{K}.
	\end{equation}
	Finally let
	\begin{equation}
		\Gamma^\tau = \set{v \in X \where \norm{v - \bar{y}}_{X, \tau}^2 \leq \mathcal{K}}.
	\end{equation}
	Obviously, $\Gamma^\tau$ is a nonempty and closed set in $X$. Since for any $v \in \Gamma^\tau$ we have \begin{align}
		\norm{v}_{X, \tau}^2 &\leqc \norm{v - \bar{y}}_{X, \tau}^2 + \norm{\bar{y}}_{X, \tau}^2 \leqc \mathcal{K}\label{eq:GammaEstimate}
	\end{align}
	the set $\Gamma^\tau$ is also bounded in $X$.

	Now let $y^1$ and $y^2$ both in $\Gamma^\tau$ and let $y \coloneqq \Phi(y^2) - \Phi(y^2)$, then $y$ satisfies \begin{align}
		(1 + \tau \delta) (\delta y + L y) y &= \sigma(a) \laplace y - (1 + \tau \delta)(\Lambda(y^1, g_0) y^1 - \Lambda(y^2, g_0) y^2),\\
		y(t = 0) &= 0, \quad \delta y(t = 0) = 0,\\
		\partial_\nu y &= 0 \text{ in } \partial\Omega,\\
		y(a = 0) &= \int_0^\amax \beta_0(\alpha) y(\alpha) \dd{\alpha}\\
		(\delta y)(a = 0) &= \int_0^\amax \beta_1(\alpha) \delta y(\alpha) + \beta_L(\alpha) y(\alpha) \textcolor{black}{+ \beta_\nabla(\alpha) \cdot \nabla y(\alpha)} \dd{\alpha}\\
		&\quad + G(y^1)(y^1, g_0) - G(y^2)(y^2, g_0)
	\end{align}
	Thus, a similar calculation to \cref{eq:RelUniqueness} combined with \cref{eq:GammaEstimate} yields \begin{align}
		\norm{y}_{X, \tau}^2 &\leqc \max \left\{ \norm{y^1}_{L^\infty((0, T), \hilbert)}^2, \tau \norm{\delta y^1}_{L^\infty((0, T), \hilbert)}^2, \tau \norm{g_0}_{L^\infty((0, T), H)}^2, \norm{y^2}_{L^\infty((0, T), \hilbert)}^2,\right.\\
		&\quad \left. \tau \norm{\delta y^2}_{L^\infty((0, T), \hilbert)}^2 \right\}  \cdot \qty(\norm{y^1 - y^2}_{L^2((0, T), \hilbert)}^2 + \tau \norm{\delta y^1 - \delta y^2}_{L^2((0, T), \hilbert)}^2)\\
		&\leqc \qty(\mathcal{K} + \tau \norm{g_0}_{L^\infty((0, T), H)}^2) T \norm{y^1 - y^2}_{X, \tau}.
	\end{align}
	Choosing $T$ small enough, proportional to $\qty(\mathcal{K} + \tau \norm{g_0}_{L^\infty((0, T), H)}^2)\inv$, allows us to compensate for the constant in front of the expression, turning $\Phi$ into a contraction on $\Gamma^\tau$. Together with \cref{eq:GammaEstimate}, this also shows that for any $y \in \Gamma^\tau$, the norm of $\Phi(y) - \bar{y} = \Phi(v) - \Phi(0)$ can be bounded above by \begin{equation}
		\norm{\Phi(y) - \bar{y}}_{X, \tau} \leqc \qty(\mathcal{K} + \tau \norm{g_0}_{L^\infty((0, T), H)}^2) \cdot T \norm{y}_{X, \tau}^2 \leqc \mathcal{K} \qty(\mathcal{K} + \tau \norm{g_0}_{L^\infty((0, T), H)}^2) T,
	\end{equation}
	which shows that if $T$ is chosen sufficiently small and proportional to the reciprocal of $\mathcal{K} + \tau \norm{g_0}_{L^\infty((0, T), H)}^2$, we can ensure that $\Phi$ maps $\Gamma^\tau$ into itself. Thus, Banach's fixed-point theorem yields a unique fixed point of $\Phi$, which is a solution to our differential equation. Since the fixed point lies in $\Gamma^\tau$, the energy estimate follows from \cref{eq:GammaEstimate}, and the weak formulation and regularity are direct consequences of \Cref{thm:RelImplExistence}.
\end{proof}


\section{Convergence for \texorpdfstring{$\tau \to 0$}{τ → 0}} \label{sec:Convergence}

In this section we compare solutions $y$ and $y^\tau$ to the unrelaxed model from \cref{eq:GeneralModel}
\begin{equation}\label{eq:ConvUnrel}
	\begin{gathered}
		\delta y + L(a, x) y + \Lambda(a, x, y) y = \sigma(a) \laplace y, \\
		y(t = 0) = y_0, \quad \partial_\nu y(x \in \partial \Omega) = 0\\
		y(t, a = 0, x) = \int_0^\amax \beta(\alpha, x) y(t, \alpha, x) \dd{\alpha}
	\end{gathered}
\end{equation} 
and the relaxed model from \cref{eq:relaxedEquation} and \cref{eq:NonlRelSystem} \begin{equation}\label{eq:ConvRel}
	\begin{gathered}
		(1 + \tau \delta)(\delta y^\tau + L(a, x) y^\tau + \Lambda(a, x, y^\tau) y^\tau) = \sigma(a) \laplace y^\tau,\\
		y^\tau(t = 0) = y_0, \quad \delta y^\tau(t = 0) = y_1, \quad \partial_\nu y^\tau(x \in \partial \Omega) = 0,\\
		y^\tau(t, a = 0, x) = \int_0^\amax \beta_0(\alpha, x) y^\tau(t, \alpha, x) \dd{\alpha} + g_0,\\
		\textcolor{black}{\begin{split}
				(\delta y^\tau)(a = 0) &= \int_0^\amax \beta_L(\alpha, x) y^\tau(t, \alpha, x) + \beta_\nabla(\alpha, x) \cdot \nabla y^\tau(t, \alpha, x) + \beta_1(\alpha, x) (\delta y^\tau)(t, \alpha, x)\\
				&\quad + \qty(\beta_1(\alpha, x) \Lambda(\alpha, x, y^\tau) - \Lambda(0, x, y^\tau) \beta(\alpha, x)) y^\tau(t, \alpha, x) \dd{\alpha} - \Lambda(0, y) g_0(t, x) + g_1(t, x)
		\end{split} }
	\end{gathered}
\end{equation}
We always assume that \Cref{assump:final} holds. From \cite[Thm. 2.15]{AzmiSchlosser} and \Cref{thm:RelMain} we conclude that there exists a time $T$ independent of $\tau$ such that $y \in L^2((0, T), \vilbert) \cap C([0, T], \hilbert)$ and $\delta y \in L^2((0, T), \vilbert')$ as well as $y^\tau \in C([0, T], \vilbert)$ and $\delta y^\tau \in C([0, T], \hilbert)$ for all $\tau \in (0, \tau_{\text{max}})$. The following results, however, require more regularity of $y$.


For the convergence result to hold it is necessary that the relaxed and the unrelaxed equations have matching initial and boundary conditions. Let $q_1 \in \R$, then from \begin{equation}
	y(a = 0) = q_1 y(a = 0) + (1-q_1) y(a = 0) = \int_0^\amax q_1 \beta(\alpha) y(\alpha) \dd{\alpha} + (1-q_1) y(a = 0)
\end{equation}
we infer that the implicit birth conditions for $y$ and $y^\tau$ match if we choose $\beta_0 \coloneqq q_1 \beta$ and $g_0 \coloneqq (1-q_1) y(a = 0)$. In other words, $q_1$ allows to switch between implicit birth conditions for the relaxed equation and explicitly given birth numbers from the unrelaxed model. By applying $(1 + \tau \delta)$ to the first line of \cref{eq:ConvUnrel}, we conclude that $y$ also satisfies \begin{equation} \label{eq:conv1System}
	\begin{aligned}
		\MoveEqLeft (1 + \tau \delta) (\delta y + L y + \Lambda(y) y) = \sigma \laplace y + \tau \delta \sigma \laplace y,\\
		y(t = 0) &= y_0, \quad \delta y(t = 0) = \sigma \laplace y_0 - (L + \Lambda(y_0)) y_0,\\
		y(a = 0, x) &= \int_0^\amax \beta_0(\alpha, x) y(t, \alpha, x) \dd{\alpha} + g_0,\\
				(\delta y)(a = 0) &= \int_0^\amax \beta_L(\alpha, x) y(t, \alpha, x) + \beta_\nabla(\alpha, x) \cdot \nabla y(t, \alpha, x) + \beta_1(\alpha, x) (\delta y)(t, \alpha, x)\\
				&\quad + \qty(\beta_1(\alpha, x) \Lambda(\alpha, x, y) - \Lambda(0, x, y) \beta(\alpha, x)) y(t, \alpha, x) \dd{\alpha} - \Lambda(0, y) g_0(t, x) + \tilde{g}_1(t, x)\\
		\partial_\nu y(x \in \partial \Omega) &= 0
	\end{aligned}
\end{equation}
where we set \begin{align}
	\tilde{g}_1 &\coloneqq (\delta y)(a = 0) - \int_0^\amax \beta_L(\alpha, x) y(t, \alpha, x) + \beta_\nabla(\alpha, x) \cdot \nabla y(t, \alpha, x) + \beta_1(\alpha, x) (\delta y)(t, \alpha, x)\\
	&\quad + \qty(\beta_1(\alpha, x) \Lambda(\alpha, x, y) - \Lambda(0, x, y) \beta(\alpha, x)) y(t, \alpha, x) \dd{\alpha} + \Lambda(0, y) g_0(t, x). \label{eq:g1Def}
\end{align}
The resulting equation has the form of \cref{eq:NonlRelSystem} with \begin{equation}
	\begin{aligned}
		f &= \tau \delta \sigma \laplace y,\\
		y_1 &= \sigma \laplace y_0 - (L + \Lambda(y_0)) y_0,
	\end{aligned} \label{eq:Convy1}
\end{equation}
and $g_1 = \tilde{g}_1.$ This allows a first convergence result:
\begin{Thm} \label{thm:ConvergenceSuboptimal}
	Under \Cref{assump:final}, let $y^\tau$ be a weak solution of \cref{eq:ConvRel} and $y$ a weak solution of \cref{eq:ConvUnrel}. Further we assume that $y \in C([0, T], \vilbert)$ with $\delta y \in C([0, T], \hilbert)$, that \begin{gather}
		\tau \delta \sigma \laplace y \in L^2((0, T), \hilbert), \quad \sigma \laplace y_0 - (L + \Lambda(y_0)) y_0 \in \hilbert, \quad \tilde{g}_1 \in L^2((0, T), H)
	\end{gather}
	where $\tilde{g}_1$ is defined by \cref{eq:g1Def}, and that $\beta_0 = q_1 \beta$ and $g_0 = (1 - q_1) y(a = 0) \in L^\infty((0, T), H) \cap L^2((0, T), V)$ for some $q_1 \in \R$. Then there is a constant $C$ independent of $\tau$ (but possibly depending on $y$ and $\tau_{\text{max}}$) such that \begin{equation}
		\tau \norm{\delta y^\tau - \delta y}_{C([0, T], \hilbert)}^2 + \norm{y^\tau - y}_{C([0, T], \vilbert)}^2 \leq C \tau.
	\end{equation}
	Hence, $y^\tau$ converges in $C([0, T], \vilbert)$ with rate $\sqrt{\tau}$ to $y$ as $\tau \to 0$.
\end{Thm}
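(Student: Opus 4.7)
The plan is to exploit the observation, already prepared in \cref{eq:conv1System}, that after applying $(1 + \tau\delta)$ the unrelaxed solution $y$ is itself a weak solution of the relaxed system \eqref{eq:NonlRelSystem2}, merely with modified data. Once this is recognized, the convergence statement reduces to the quantitative stability estimate of \Cref{lem:NonlRelGronwall} applied to the pair $(y^\tau, y)$.

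More precisely, the matching $\beta_0 = q_1\beta$ and $g_0 = (1-q_1) y(a=0)$ synchronizes the zeroth-order birth conditions, and under these choices $y$ becomes a weak solution of \eqref{eq:NonlRelSystem2} with data
\begin{equation}
y_0^y = y_0, \quad y_1^y = \sigma \laplace y_0 - (L + \Lambda(y_0)) y_0, \quad f^y = \tau\,\delta\sigma\laplace y, \quad g_0^y = g_0, \quad g_1^y = \tilde{g}_1,
\end{equation}
while $y^\tau$ solves the same system with the prescribed data $(y_0, y_1, 0, g_0, g_1)$. The hypotheses exactly ensure that $y_1^y \in \hilbert$, $f^y \in L^2((0,T),\hilbert)$ and $g_1^y \in L^2((0,T),H)$, so \Cref{assump:final} is met for the $y$-problem as well. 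The uniform bound $S$ required by \Cref{lem:NonlRelGronwall} on $\|y^\tau\|_{L^\infty((0,T),\hilbert)}$, $\tau\|\delta y^\tau\|_{L^\infty((0,T),\hilbert)}$ and the analogous quantities for $y$ is available uniformly in $\tau$: for $y^\tau$ from the energy estimate of \Cref{thm:RelMain}, for $y$ from the assumed continuity $y \in C([0,T],\vilbert)$, $\delta y \in C([0,T],\hilbert)$.

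Applying \Cref{lem:NonlRelGronwall} to $y^\tau - y$, the differences $y_0 - y_0^y$ and $g_0 - g_0^y$ vanish and what remains is
\begin{equation}
\tau\|\delta y^\tau - \delta y\|_{C([0,T],\hilbert)}^2 + \|y^\tau - y\|_{C([0,T],\vilbert)}^2 \leqc \tau\|y_1 - y_1^y\|_\hilbert^2 + \|\tau\,\delta\sigma\laplace y\|_{L^2((0,T),\hilbert)}^2 + \tau\|g_1 - \tilde{g}_1\|_{L^2((0,T),H)}^2.
\end{equation}
Each term on the right is $O(\tau)$: the first and third are explicitly of the form $\tau \cdot C(y,y_0,y_1,g_1)$, while the middle one equals $\tau^2\|\delta\sigma\laplace y\|_{L^2((0,T),\hilbert)}^2 \leq \tau_{\text{max}}\tau\|\delta\sigma\laplace y\|_{L^2((0,T),\hilbert)}^2$. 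This yields the asserted bound $C\tau$, and hence $\sqrt{\tau}$-convergence in $C([0,T],\vilbert)$.

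The main technical obstacle is not in the closing estimates but in the bookkeeping of the first step: one has to justify rigorously that applying $(1 + \tau\delta)$ to \eqref{eq:ConvUnrel} produces the weak formulation of \eqref{eq:NonlRelSystem2} in the sense of \Cref{thm:RelMain}, and in particular that $\delta^2 y$ can be interpreted in $L^2((0,T),\vilbert')$. This is precisely why the theorem assumes the extra regularity $\tau\delta\sigma\laplace y \in L^2((0,T),\hilbert)$ and $\tilde{g}_1 \in L^2((0,T),H)$; together with the explicit form of $(1+\tau\delta)\Lambda$ from \cref{eq:EinsPlusDeltaLambdaDef} and the Lipschitz bounds \cref{eq:NonlEst1}, \cref{eq:Gestimate}, all terms arising from $\delta$ differentiation of the nonlinearity remain in the correct spaces, so the stability lemma applies without further work.
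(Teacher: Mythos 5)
Your proposal is correct and follows the same route as the paper: recognize via \cref{eq:conv1System} that $y$ solves the relaxed system \eqref{eq:NonlRelSystem2} with data $f = \tau\,\delta\sigma\laplace y$, $y_1 = \sigma\laplace y_0 - (L+\Lambda(y_0))y_0$, $g_1 = \tilde{g}_1$, then apply \Cref{lem:NonlRelGronwall} and bound the surviving right-hand-side terms by $C\tau$ exactly as you do. The additional remarks on the uniform bound $S$ and on interpreting $\delta^2 y$ are sensible elaborations of points the paper leaves implicit, but the argument itself is the same.
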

\begin{proof}
	Applying \Cref{lem:NonlRelGronwall} to \cref{eq:ConvRel} and \cref{eq:conv1System} directly gives the estimate \begin{align}
		\MoveEqLeft\tau \norm{\delta y^\tau - \delta y}_{C([0, T], \hilbert)}^2 + \norm{y^\tau - y}_{C([0, T], \vilbert)}^2\\*
		&\leqc \tau \norm{y_1 - \sigma \laplace y_0 + (L + \Lambda(y_0)) y_0}_\hilbert^2 + \norm{\tau \delta \sigma \laplace y}_{L^2((0, T), \hilbert)}^2 + \tau \norm{g_1 - \tilde{g}_1}_{L^2((0, T), H)}^2.
	\end{align}
	Thus, choosing $C$ as a multiple of \begin{equation}
		\max\set{\norm{y_1 - \sigma \laplace y_0 + (L + \Lambda(y_0)) y_0}_\hilbert^2, \tau_{\text{max}} \norm{\delta \sigma \laplace y}_{L^2((0, T), \hilbert)}^2, \norm{g_1 - \tilde{g}_1}_{L^2((0, T), H)}^2}
	\end{equation}
	concludes the proof.
\end{proof}

The convergence rate can be improved if the first-order initial and birth conditions for $y$ and $y^\tau$ are compatible too. For this, we need to assume that $\beta$, $\beta_0$ and $\beta_1$ do not depend on space but only on age. Then, \textcolor{black}{we know from \cref{eq:horridCondition} that} $y$ satisfies a first-order implicit birth condition of the form \textcolor{black}{\begin{align}
		(\delta y)(a = 0) &= \int_0^\amax \qty(\sigma(0) \laplace \beta(\alpha, x) + \sigma(0) \beta(\alpha, x) \sigma(\alpha)\inv L(\alpha, x) - L(0, x) \beta(\alpha, x)) y(t, \alpha, x)\\
		&\quad + 2 \sigma(0) \nabla \beta(\alpha, x) \cdot \nabla y(t, \alpha, x) + \sigma(0) \beta(\alpha, x) \sigma(\alpha)\inv (\delta y)(t, \alpha, x)\\
		&\quad + \qty(\sigma(0) \beta(\alpha, x) \sigma(\alpha)\inv \Lambda(\alpha, x, y) - \Lambda(0, x, y) \beta(\alpha, x)) y(t, \alpha, x) \dd{\alpha}.
\end{align}}
This allows for a similar trick as before: Let $q_2 \in \R$, then from \begin{align}
	\MoveEqLeft (\delta y) (a = 0) = q_2 (\delta y) (a = 0) + (1-q_2) (\delta y) (a = 0)\\
	&= \int_0^\amax q_2 \qty(\sigma(0) \laplace \beta(\alpha, x) + \sigma(0) \beta(\alpha, x) \sigma(\alpha)\inv L(\alpha, x) - L(0, x) \beta(\alpha, x)) y(t, \alpha, x)\\
	&\quad + 2 q_2  \sigma(0) \nabla \beta(\alpha, x) \cdot \nabla y(t, \alpha, x) + q_2 \sigma(0) \beta(\alpha, x) \sigma(\alpha)\inv (\delta y)(t, \alpha, x)\\
	&\quad + q_2 \qty(\sigma(0) \beta(\alpha, x) \sigma(\alpha)\inv \Lambda(\alpha, x, y) - \Lambda(0, x, y) \beta(\alpha, x)) y(t, \alpha, x) \dd{\alpha} + (1-q_2) (\delta y) (a = 0).
\end{align}
we conclude that the first-order birth conditions for $y^\tau$ and $y$ match if we choose \begin{equation}
	\begin{aligned}
		\beta_1(\alpha) &\coloneqq q_2 \sigma(0) \beta(\alpha) \sigma(\alpha)\inv,\\
		\beta_L(\alpha) &\coloneqq q_2 \qty(\sigma(0) \laplace \beta(\alpha, x) + \sigma(0) \beta(\alpha, x) \sigma(\alpha)\inv L(\alpha, x) - L(0, x) \beta(\alpha, x)),\\
		\beta_\nabla &\coloneqq 2 q_2  \sigma(0) \nabla \beta(\alpha, x),\\
		g_1 &\coloneqq (1-q_2) (\delta y) (a = 0).
	\end{aligned} \label{eq:KompKond}
\end{equation}
Further, we need to assume that \cref{eq:Convy1} is satisfied. Under these conditions, $y$ solves the system \begin{align}
	\MoveEqLeft (1 + \tau \delta) (\delta y + L y + \Lambda(y) y) = \sigma \laplace y + \tau \delta \sigma \laplace y,\\
	y(t = 0) &= y_0, \quad \delta y(t = 0) = y_1,\\
	y(a = 0, x) &= \int_0^\amax \beta_0(\alpha, x) y(t, \alpha, x) \dd{\alpha} + g_0,\\
		(\delta y)(a = 0) &= \int_0^\amax \beta_L(\alpha, x) y(t, \alpha, x) + \beta_\nabla(\alpha, x) \cdot \nabla y(t, \alpha, x) + \beta_1(\alpha, x) (\delta y)(t, \alpha, x)\\
		&\quad + \qty(\beta_1(\alpha, x) \Lambda(\alpha, x, y) - \Lambda(0, x, y) \beta(\alpha, x)) y(t, \alpha, x) \dd{\alpha} - \Lambda(0, y) g_0(t, x) + g_1(t, x),\\
	\partial_\nu y(x \in \partial \Omega) &= 0.
\end{align}
This formulation allows to prove the following theorem.
\begin{Thm}\label{thm:tauConvergence}
	In the situation of \Cref{thm:ConvergenceSuboptimal}, assume that $\beta$, $\beta_0$ and $\beta_1$ do not depend on $x$. Let $q_2 \in \R$ and assume that the compatibility conditions from \cref{eq:Convy1} and \cref{eq:KompKond} hold. Then we can estimate \begin{equation}
		\tau \norm{\delta y^\tau - \delta y}_{C([0, T], \hilbert)}^2 + \norm{y^\tau - y}_{C([0, T], \vilbert)}^2 \leq C \tau^2
	\end{equation}
	with a constant $C$ independent of $\tau$, but possibly depending on $y$. From this follows that $y^\tau$ converges in $C([0, T], \vilbert)$ to $y$ with rate $\tau$, and $\delta y^\tau$ converges to $\delta y$ in $C([0, T], \hilbert)$ with rate $\sqrt{\tau}$.
\end{Thm}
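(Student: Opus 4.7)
The plan is to reduce the assertion to an application of \Cref{lem:NonlRelGronwall}, exploiting the fact that, under the compatibility conditions, the function $y$ now satisfies \emph{exactly} the same relaxed system \eqref{eq:NonlRelSystem} as $y^\tau$ does, except for a residual source term of size $O(\tau)$. The convergence rate then follows from estimating this residual.

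First I would verify that $y$ solves the relaxed system with the same data as $y^\tau$ up to a single discrepancy. By applying the operator $(1+\tau\delta)$ to the first line of \cref{eq:ConvUnrel}, one sees that $y$ satisfies
\begin{equation}
(1+\tau\delta)(\delta y + Ly + \Lambda(y)y) = \sigma\laplace y + \tau\delta\sigma\laplace y.
\end{equation}
The assumption $y_1 = \sigma\laplace y_0 - (L+\Lambda(y_0))y_0$ from \cref{eq:Convy1} matches the initial value of $\delta y$ with that of $\delta y^\tau$. The explicit birth condition for $y(a=0)$ matches by the choice $\beta_0 = q_1\beta$, $g_0 = (1-q_1)y(a=0)$ as in \Cref{thm:ConvergenceSuboptimal}. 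Finally, with $\beta_1, \beta_L, \beta_\nabla, g_1$ as prescribed in \cref{eq:KompKond}, an elementary substitution shows that the first-order birth condition \eqref{eq:NonlBirthLaw2} also holds for $y$. Thus, $y$ is a weak solution of \eqref{eq:NonlRelSystem} with data $(y_0, y_1, f_y, g_0, g_1)$ where $f_y \coloneqq \tau\delta\sigma\laplace y$, while $y^\tau$ is a weak solution with data $(y_0, y_1, 0, g_0, g_1)$.

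Next I would invoke \Cref{lem:NonlRelGronwall} with $y^1 \coloneqq y^\tau$, $y^2 \coloneqq y$. Since all the data agree except for the inhomogeneities $f^1 - f^2 = -\tau\delta\sigma\laplace y$, and the $L^\infty$-bounds required to apply the lemma are provided by the regularity hypothesis $y, \delta y \in C([0,T],\hilbert)$ together with the a priori bound on $y^\tau$, $\delta y^\tau$ from \Cref{thm:RelMain} (both uniform in $\tau \in (0,\tau_{\text{max}})$), the lemma directly yields
\begin{equation}
\tau\norm{\delta y^\tau - \delta y}_{C([0,T],\hilbert)}^2 + \norm{y^\tau - y}_{C([0,T],\vilbert)}^2 \leqc \norm{\tau\delta\sigma\laplace y}_{L^2((0,T),\hilbert)}^2 \leq \tau^2 \norm{\delta\sigma\laplace y}_{L^2((0,T),\hilbert)}^2,
\end{equation}
with a constant independent of $\tau$. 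Setting $C$ proportional to $\norm{\delta\sigma\laplace y}_{L^2((0,T),\hilbert)}^2$ finishes the proof; the last assertion about the convergence rates in the respective norms follows by taking square roots on each term separately.

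The main conceptual point—rather than a technical obstacle—is checking that the first-order birth condition for $y$ arising from the unrelaxed model really can be brought into the same form as the prescribed condition \eqref{eq:NonlBirthLaw2}. This is what \cref{eq:horridCondition} and the convex-combination trick with $q_2$ ensure, and this is precisely why the spatial independence of $\beta$, $\beta_0$, $\beta_1$ is needed: without it, the identity relating $(\delta y)(a=0)$ to the integrals with $\beta_L$, $\beta_\nabla$, $\beta_1$ fails in general, and the compatibility conditions \cref{eq:KompKond} would not suffice to eliminate the $O(\sqrt{\tau})$ term coming from $g_1 - \tilde g_1$ in \Cref{thm:ConvergenceSuboptimal}. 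Once this algebraic matching is in place, the proof is otherwise a direct corollary of the Gronwall-type estimate already established.
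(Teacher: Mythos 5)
Your proposal is correct and follows essentially the same route as the paper: recognizing that under the compatibility conditions \eqref{eq:Convy1} and \eqref{eq:KompKond} the function $y$ solves the relaxed system \eqref{eq:NonlRelSystem} with the same data as $y^\tau$ except for the residual source $\tau\delta\sigma\laplace y$, and then applying \Cref{lem:NonlRelGronwall} to obtain the bound $\leqc \tau^2\norm{\delta\sigma\laplace y}_{L^2((0,T),\hilbert)}^2$. Your additional remarks on why the spatial independence of $\beta$ is needed for the first-order birth condition to match, and on the $L^\infty$-bounds required to invoke the lemma, faithfully reflect the preparatory discussion the paper carries out before stating the theorem.
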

\begin{proof}
	Invoking \Cref{lem:NonlRelGronwall} shows that \begin{equation}
		\tau \norm{\delta y^\tau - \delta y}_{C([0, T], \hilbert)}^2 + \norm{y^\tau - y}_{C([0, T], \vilbert)}^2 \leqc \tau^2 \norm{\delta \sigma \laplace y}_{L^2((0, T), \hilbert)}^2.
	\end{equation}
	This concludes the proof.
\end{proof}

\section{Numerical simulation} \label{sec:Numerics}
This section addresses some of the questions posed earlier, such as \begin{itemize}
	\item Does the relaxed model actually have finite propagation speed?
	\item Is it worth to consider a relaxed model, or are the solutions \enquote{close enough} to the simpler unrelaxed model that the extra effort is not worth it?
	\item Can we, in some way, visualize the convergence for $\tau$ getting close to zero?
\end{itemize}
Rather than giving universal results on our very general class of models, which is difficult due to the high complexity, we consider the simple SVIR model (which is a simplified version of the model presented in \cite{AbRaSch}) \begin{align}
	\delta S - c V + \left[\mu + \Lambda(I) \right] S &= \sigma_S \Delta S,\\
	\delta V + \left[\mu + c + \phi_1 \Lambda(I) \right] V &= \sigma_V \Delta V,\\
	\delta I - \Lambda(I) (S + \phi_1 V + \phi_2 R) + (\mu + \delta + \gamma) I &= \sigma_I \Delta I,\\
	\delta R - \gamma I + \left[\mu + \phi_2 \Lambda(I) \right] R &= \sigma_R \Delta R,
\end{align} where \begin{equation}
	\Lambda(a, x, I(t, \cdot, \cdot)) = \int_0^\amax \int_\Omega \lambda(a, \alpha, x, \xi) I(t, \alpha, \xi) \dd{\xi} \dd{\alpha}.
\end{equation}
In \cite{AzmiSchlosser} it was shown that this model can be written in the form of \cref{eq:GeneralModel}. The corresponding relaxed version of this model is \begin{align}
	(1 + \tau \delta) (\delta S - c V + \left[\mu + \Lambda(I) \right] S) &= \sigma_S \Delta S,\\
	(1 + \tau \delta) (\delta V + \left[\mu + c + \phi_1 \Lambda(I) \right] V) &= \sigma_V \Delta V,\\
	(1 + \tau \delta) (\delta I - \Lambda(I) (S + \phi_1 V + \phi_2 R) + (\mu + \delta + \gamma) I) &= \sigma_I \Delta I,\\
	(1 + \tau \delta) (\delta R - \gamma I + \left[\mu + \phi_2 \Lambda(I) \right] R) &= \sigma_R \Delta R,
\end{align} We proceed by calculating numerical solutions to both equations and compare the results for varying values of $\tau$ to the solution of the unrelaxed equation where $\tau = 0$.

The space domain $\Omega$ is chosen as the interval $(0, 1)$. The age-dependent functions for the birth and death rate were taken from \cite{AnitaArnautuCapasso}. We choose resp. adapt the model parameters given in \Cref{tab:values} (the same ones as in \cite[Table 1]{AzmiSchlosser}) and the following initial value: A total of 1000 susceptible individuals is distributed uniformly over age and space, and on the boundary where $x = 1$, there are 10 infective individuals, uniformly distributed over all ages. This allows us to track the progression of the epidemic through space and to compare the differing propagation speed. We test various values for $\tau$, namely all $\tau = 10^{j}$ for $j \in \set{-8, \ldots, 2}$. Also, we choose $y_1 = 0$ and $\beta_1 = \beta_0 = \beta$, which can be interpreted as follows: At $t = 0$, the same number of individuals move to the left and to the right, and newborn individuals move in the same direction as their parents. A simple alternative choice would be $\beta_1 = 0$, which  means that the direction of newborns is determined at random. Thus, the compatibility conditions of \Cref{thm:ConvergenceSuboptimal} are satisfied with $q_1 = 1$.


\begin{table}[h]
	\centering
	\begin{tabular}{cll}\toprule
		Parameter & Value & Source\\\midrule
		$T$ & 5 & Assumed\\
		$\amax$ & 1 & Assumed\\
		$\alpha$ & 500 & Assumed\\
		$c$ & 0.18564 & \cite[Table 1]{AbRaSch}\\
		$\mu$ & $e^{-a} \cdot a^5$ & Adapted from \cite[p. 155]{AnitaArnautuCapasso}, without the pole at $\amax$\\
		$\phi_1$ & 0.0052 & \cite[Table 1]{AbRaSch}\\
		$\phi_2$ & 0.00062 & \cite[Table 1]{AbRaSch}\\
		$\delta$ & 0.0018 & \cite[Table 1]{AbRaSch}\\ 
		$\gamma$ & 0.278574 & $\theta/2$ from \cite[Table 1]{AbRaSch}\\
		$\lambda$ & $(0.1 - \abs{x - \xi})^+$ & Assumed\\
		$\beta$ & $\frac{6.78}{\amax} a^2 (\amax - a) (1 + \sin(\pi \frac{a}{\amax}))$ & Adapted from \cite[p. 155]{AnitaArnautuCapasso}\\
		$\sigma_S$ & $0.1 e^{-0.1 a}$ & Assumed\\
		$\sigma_V$ & $0.1 e^{-0.1 a}$ & Assumed\\
		$\sigma_I$ & $0.05 e^{-0.1 a}$ & Assumed\\
		$\sigma_R$ & $0.1 e^{-0.1 a}$ & Assumed\\
		\bottomrule
	\end{tabular}
	\caption{Parameter Setting}
	\label{tab:values}
\end{table}

\begin{figure}
	\centering
	\begin{subfigure}{.8\textwidth}
		\includegraphics[width = \linewidth]{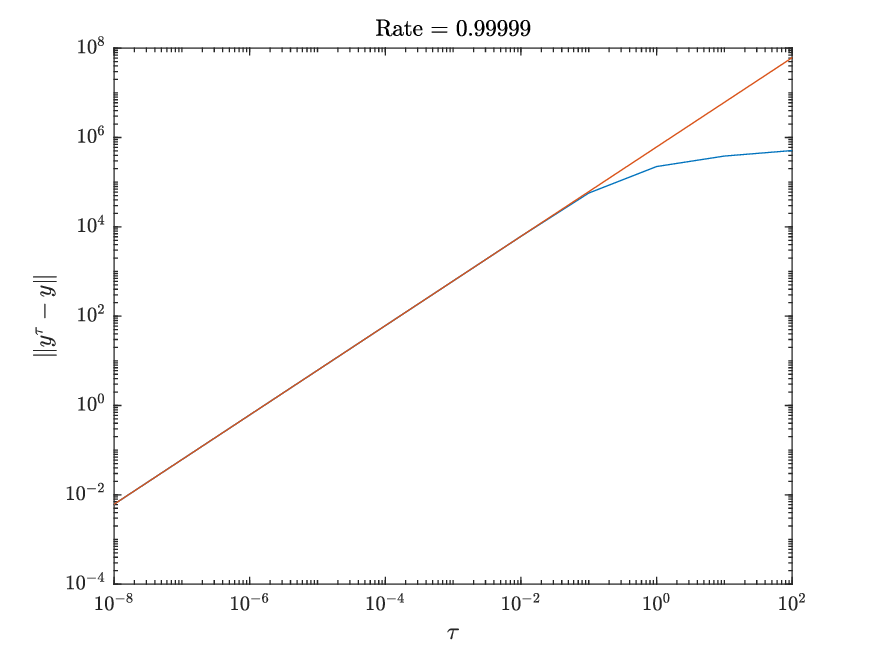}
		\caption{Convergence rate}
		\label{fig:CompRate}
	\end{subfigure}
	\begin{subfigure}{.8\textwidth}
		\includegraphics[width = \linewidth]{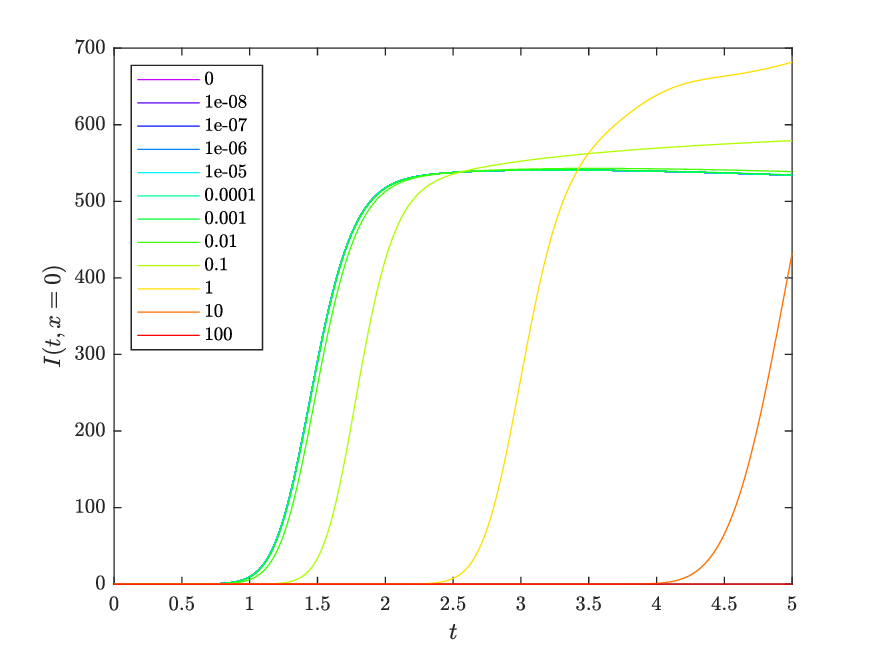}
		\caption{Values of $I$ at $x = 0$}
		\label{fig:CompValuesI}
	\end{subfigure}
\end{figure}
\begin{figure} \ContinuedFloat
	\centering
	\begin{subfigure}{\textwidth}
		\includegraphics[width = \linewidth]{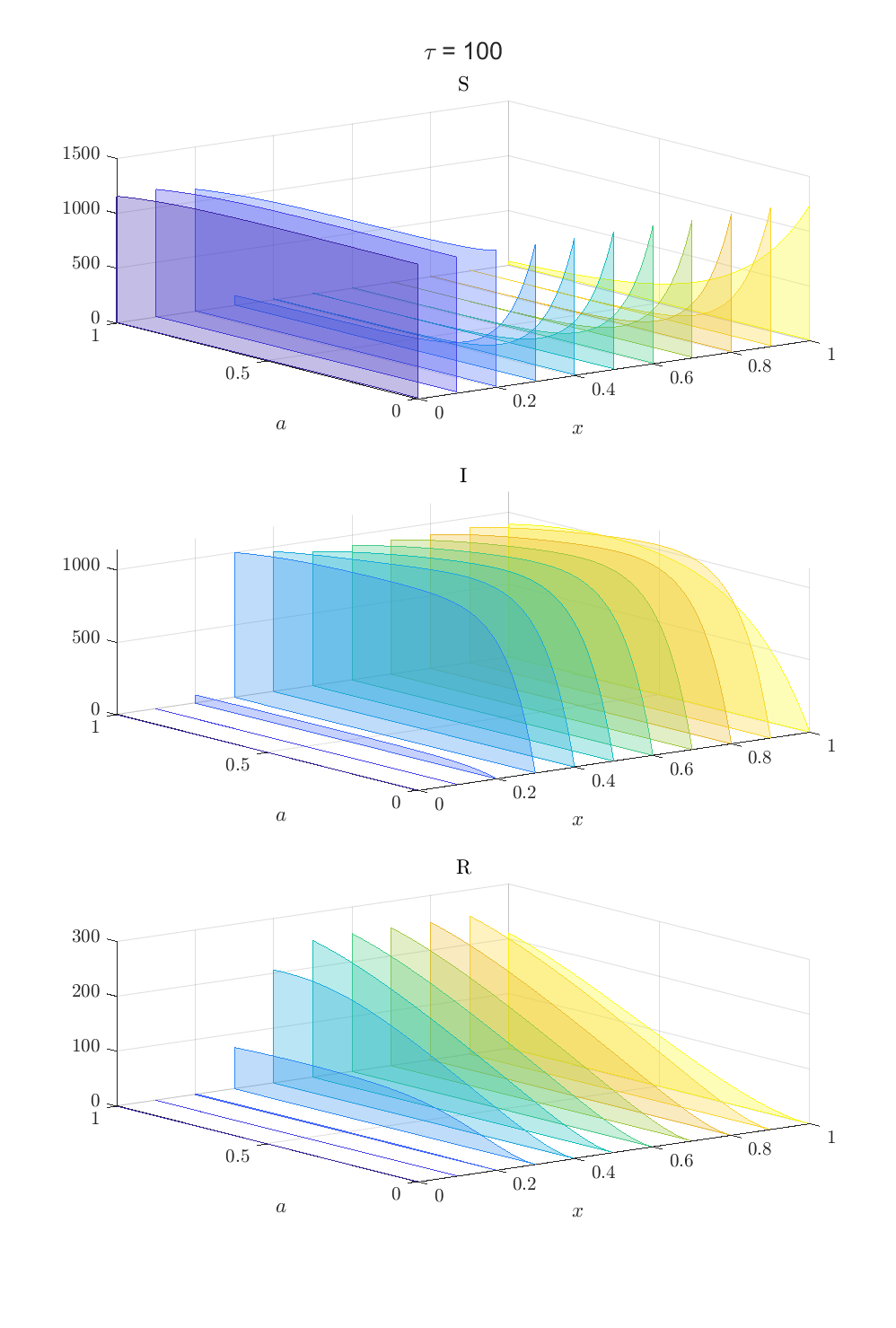}
		\caption{Final state of the relaxed equation}
		\label{fig:CompRelaxedFinal}
	\end{subfigure}
\end{figure}
\begin{figure} \ContinuedFloat
	\centering
	\begin{subfigure}{\textwidth}
		\includegraphics[width = \linewidth]{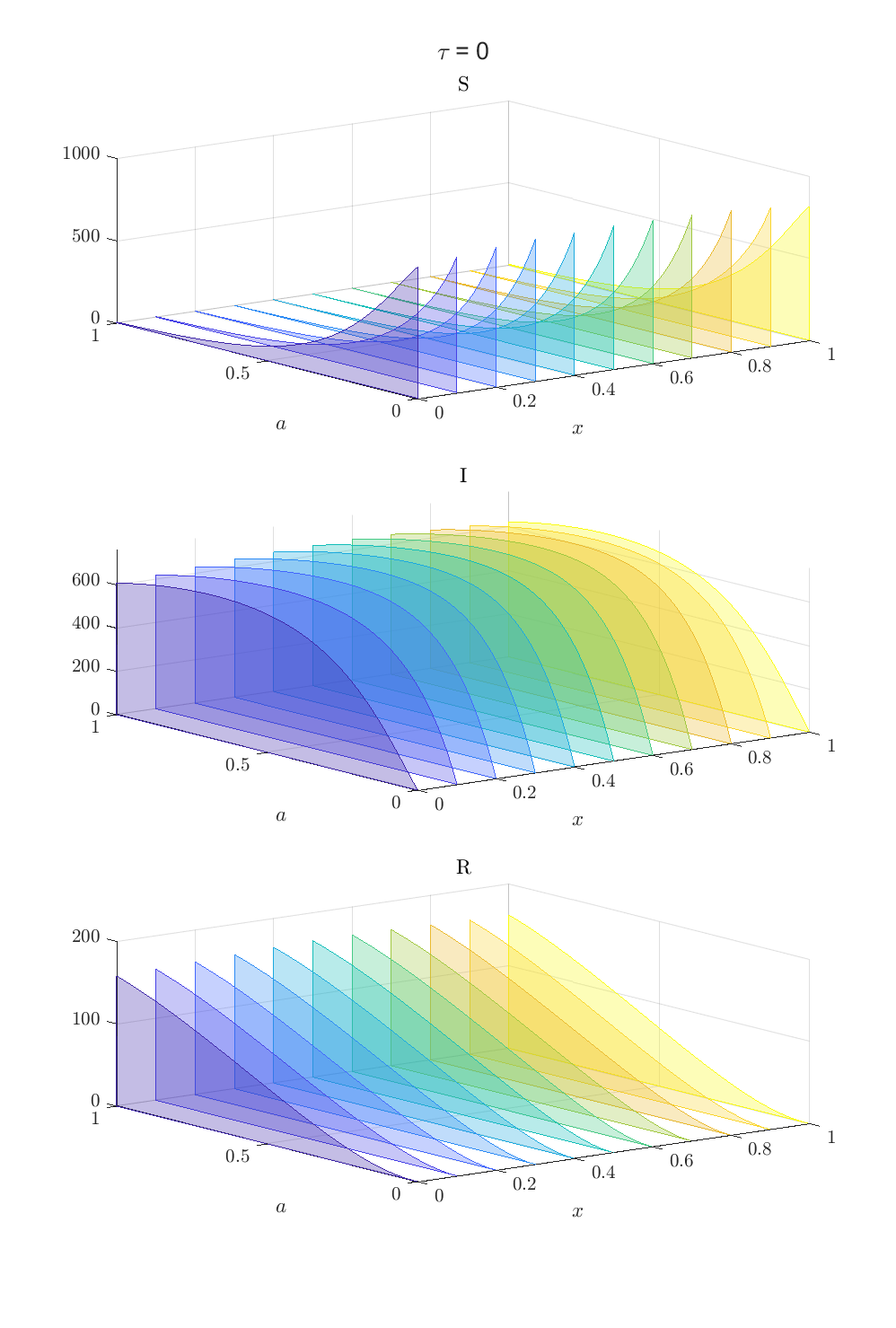}
		\caption{Final state of the unrelaxed equation}
		\label{fig:CompUnrelaxedFinal}
	\end{subfigure}
	\caption{Comparison of different values of $\tau$}
	\label{fig:Comp}
\end{figure}

The results can be found in \Cref{fig:Comp}. In \Cref{fig:CompRate}, we numerically calculated the rate of convergence of the unrelaxed model towards the relaxed one. The figure shows a graph of $\norm{y^\tau - y}_{L^\infty}$ over $\tau$, and a polynomial fit of the data. The resulting slope, and hence the convergence rate, turns out to be 1, which is even better than the value of 0.5 we would expect from \Cref{thm:ConvergenceSuboptimal}, and suggests that the results from \Cref{thm:tauConvergence} hold in higher generality. \Cref{fig:CompValuesI} shows the total number of infective individuals at the $x = 0$ boundary, directly opposite to where all infectives have been in the beginning of the simulation. The lines corresponding to smaller values of $\tau$ all overlap, but the lines for $\tau = 0.1$, $\tau = 1$ and $\tau = 10$ are clearly distinct and show that the larger $\tau$ is, the slower the infection moves through the domain. The line corresponding to $\tau = 100$ is just flat at the bottom, the population of infective individuals has not yet fully traversed the interval. This can also be seen in \Cref{fig:CompRelaxedFinal}, which shows the state for $\tau = 100$ at the final time of the simulation. Here, the progress of the infection can directly be observed, it seems to have just arrived at the $x = 0.2$ mark. Compare this result to \Cref{fig:CompUnrelaxedFinal}, which shows the final state of the solution for $\tau = 0$, where no real spatial difference can be seen. To summarize, our results suggest that the relaxed model does in fact show finite propagation speed and hence is very suitable to model the spread of an epidemic over a region, but choosing small values for $\tau$ yields results that do not differ very much from an unrelaxed model.

The graphs, especially \Cref{fig:CompValuesI}, also show that the peak number of infectious individuals is higher for large values of tau. However, note that this is not only true for the infective population. In fact, for the values of $\tau$ larger than 0.01, the total population increases, while for smaller values it behaves like the unrelaxed model, that is, it decreases slightly. This can also be seen from \Cref{fig:CompRelaxedFinal}, where, for example, the number of susceptible numbers visibly exceeds the initial value of 1000 individuals, while in \Cref{fig:CompUnrelaxedFinal} it remains below that number. This suggests that either large values of $\tau$ yield a model that deviates too much from the unrelaxed equation to be useful, or that the birth rates need to be adjusted for these values.


\vspace{1em}
\textbf{Conflict of interest:} The author has not disclosed any competing interests.

\bibliographystyle{abbrvurl}
\setbibliographyfont{lastname}{\textsc}
\bibliography{Bibliography}
\end{document}